\newtheorem{theorem}{Theorem}[section]
\newtheorem{lemma}[theorem]{Lemma}
\newtheorem{corollary}[theorem]{Corollary}
\theoremstyle{definition}
\newtheorem{definition}[theorem]{Definition}
\theoremstyle{remark}
\newtheorem{remark}[theorem]{Remark}
\numberwithin{equation}{section}
\begin{document}
\title[Approximation in weighted Orlicz spaces]{Jackson-Stechkin type inequalities
for differentiable functions in weighted Orlicz spaces}
\author[R. Akg\"{u}n]{Ramazan Akg\"{u}n$^{\star}$}
\address{ BALIKES\.{I}R UNIVERSITY, FACULTY OF ARTS AND SCIENCES, DEPARTMENT
OF MATHEMATICS, \c{C}A\u{G}I\c{S} YERLE\c{S}KES\.{I}, 10145, BALIKES\.{I}R, T%
\"{U}RK\.{I}YE.}
\email{\textcolor[rgb]{0.00,0.00,0.84}{rakgun@balikesir.edu.tr}}
\subjclass[2010]{Primary 46E30; Secondary 42A10,41A17,41A20,41A25,41A27.}
\keywords{Jackson inequality, Moduli of Smoothness, Muckenhoupt weight,
Trigonometric Approximation.\\
$^{\star}$This research was supported by Balikesir University Research
Project 2019/061}

\begin{abstract}
In the present work some Jackson Stechkin type direct theorems of
trigonometric approximation are proved in Orlicz spaces with weights
satisfying some Muckenhoupt's $A_{p}$ condition. To obtain refined version
of the Jackson type inequality we prove an extrapolation theorem,
Marcinkiewicz multiplier theorem and Littlewood Paley type results. As a
consequence refined inverse Marchaud type inequalities are obtained. By
means of a realization result we find an equivalence between the fractional
order weighted modulus of smoothness and the classical weighted Peetre's \ $%
K $-functional.
\end{abstract}

\maketitle

\section{Introduction}

The present work is devoted to obtain some Jackson type direct theorems of
trigonometric approximation in weighted Orlicz spaces $L_{W}^{\varphi }$
that constructed by means of a quasiconvex Young function $\varphi $. The
Muckenhoupt's weights $W\in A_{p}$ ($1\leq p\leq \infty $) play a special
role in Harmonic Analysis because these are precisely the weights for which
some singular integrals and maximal operators are bounded in the weighted
Lebesgue spaces $L_{W}^{p}$ (\cite{c85}). We refer, for example, to the
monograph \cite{c85} for a complete account on the theory of Muckenhoupt
weights. In this study we will focus on those Muckenhoupt weights that in $%
A_{p\left( \varphi \right) }$ where $p\left( \varphi \right) $ is the indice
(\cite{gk94})\ of $\varphi $: 
\begin{equation}
\frac{1}{p\left( \varphi \right) }:=\inf \left\{ p:p>0,\text{ }\varphi ^{p}%
\text{ is quasiconvex}\right\} .  \label{indic}
\end{equation}%
A function $\varphi $ is called \textit{Young function} if $\varphi $ is
even, continuous function, nonnegative in $\mathbb{R}:=\left( -\infty
,+\infty \right) $, increasing on $\mathbb{R}^{+}:=\left( 0,\infty \right) $
such that%
\begin{equation*}
\varphi \left( 0\right) =0\text{,\quad }\underset{x\rightarrow \infty }{\lim 
}\varphi \left( x\right) =\infty \text{.}
\end{equation*}%
A function $\varphi :[0,\infty )\mathbb{\rightarrow \lbrack }0,\infty )$ is
said to be \textit{quasiconvex} ($QC$ briefly) if there exist a convex Young
function $\Phi $ and a constant $C\geq 1$ such that%
\begin{equation*}
\Phi \left( x\right) \leq \varphi \left( x\right) \leq \Phi \left( Cx\right) 
\text{,\quad }\forall x\geq 0\text{.}
\end{equation*}%
Detailed information of quasiconvex functions and their applications see
monographs \cite{GGKK,KK91}.

The Jackson type theorem relates the best approximation error $E_{n}\left(
f\right) _{\varphi ,W}$ from above with the modulus of smoothness. There is
a lot of way of defining the modulus of smoothness. Generally speaking the
modulus of smoothness are defined by means of the operations such as the
usual translation operator $f\left( \cdot \right) \rightarrow f\left( \cdot
+h\right) $, ($h\in \mathbb{R}$), or the average operator $f\left( \cdot
\right) \rightarrow \frac{1}{h}\int\nolimits_{0}^{h}f\left( \cdot +t\right)
dt$ and so on. Usual translation operator $f\left( \cdot \right) \rightarrow
f\left( \cdot +h\right) $ is, in general, not bounded for weighted spaces of
functions. For overcoming this difficulty we will consider the Steklov
average operator%
\begin{equation*}
f\left( \cdot \right) \rightarrow \mathcal{A}_{h}f\left( \cdot \right) :=%
\frac{1}{h}\int\nolimits_{-h/2}^{h/2}f\left( \cdot +t\right) dt
\end{equation*}%
and the Muckenhoupt weights. Considering weighted Lebesgue space $L_{W}^{p}$
with Muckenhoupt weight $W\in A_{p}$, ($1<p<\infty $), in 1986, E. A.
Gadjieva (\cite{Gadj}) defined the modulus of smoothness which is
constructed by means of the operator%
\begin{equation}
\sigma _{h}f\left( \cdot \right) :=\frac{1}{2h}\int\nolimits_{-h}^{h}f\left(
\cdot +t\right) dt,\text{\quad }h\in \mathbb{R}^{+}.  \label{st}
\end{equation}%
For the same spaces $L_{W}^{p}$, $W\in A_{p}$, ($1<p<\infty $), another
approach is given by N. X. Ky, see e.g., \cite{Ky1,Ky2,Ky3}. The Steklov
mean satisfies the inequality $\left\vert \mathcal{A}_{h}f\left( x\right)
\right\vert \leq Mf(x)$ almost everywhere on $\mathsf{T}:=[0,2\pi ]$, where $%
M$ is the Hardy-Littlewood maximal function. Considering the boundedness of
the Hardy-Littlewood maximal function $M$, author and Israfilov (\cite{ai11}%
) was considered the following modulus of smoothness in weighted Orlicz
spaces%
\begin{equation*}
\underset{\underset{i=1,2,...,r}{0<h_{i}\leq \delta }}{\sup }\left\Vert
\prod\limits_{i=1}^{r}\left( I-\sigma _{_{h_{i}}}\right) f\right\Vert
_{\varphi ,W}\text{,\quad }r\in \mathbb{N},
\end{equation*}%
where $\left\Vert \cdot \right\Vert _{\varphi ,W}$ is some norm in weighted
Orlicz space.

In this work we will consider more natural weighted modulus of smoothness%
\begin{equation*}
\Omega _{r}\left( f,\delta \right) _{\varphi ,W}:=\underset{0<h\leq \delta }{%
\sup }\left\Vert \left( I-\mathcal{A}_{_{h}}\right) ^{r}f\right\Vert
_{\varphi ,W}\text{,\quad }r\in \mathbb{R}^{+}
\end{equation*}%
of fractional order $r\in \mathbb{R}^{+}$. We obtain in \textsc{Theorem} \ref%
{teo2} that if%
\begin{equation}
k\in \mathbb{R}^{+}\text{, }\varphi \in \Delta _{2}\text{, }\varphi ^{\theta
}\text{ is }QC\text{ for some }\theta \in \left( 0,1\right) \text{, }W\in
A_{p\left( \varphi \right) }\text{, }f\in L_{W}^{\varphi }\text{,}
\label{kosul}
\end{equation}%
then%
\begin{equation}
E_{n}\left( f\right) _{\varphi ,W}\leq C_{k,W,\varphi }\Omega _{k}\left( f,%
\frac{1}{n}\right) _{\varphi ,W}  \label{dirr}
\end{equation}%
holds for $n\in \mathbb{N}$ with some constant $C_{k,W,\varphi }>0$
depending only on $k$, $\varphi $ and $W,$ where%
\begin{equation*}
E_{n}\left( f\right) _{\varphi ,W}:=\inf \left\{ \left\Vert f-T\right\Vert
_{\varphi ,W}:T\in \mathcal{T}_{n}\right\} \text{,}
\end{equation*}%
and $\mathcal{T}_{n}$ is the class of real trigonometric polynomials of
degree not greater than $n$.

Under the conditions (\ref{kosul}) we have also a refinement of the
inequality (\ref{dirr}). It is obtained in \textsc{Theorem} \ref{teo4}: If
the conditions (\ref{kosul}) are hold, then we have the inequality%
\begin{equation*}
\left( \prod\limits_{j=1}^{n}E_{j}\left( f\right) _{\varphi ,W}\right)
^{1/n}\leq C_{k,W,\varphi }\Omega _{k}\left( f,\frac{1}{n}\right) _{\varphi
,W}
\end{equation*}%
with a constant $C_{k,W,\varphi }>0$ depending only on $k$, $\varphi $ and $%
W $.

Let $\Psi $ be the class of increasing functions $\phi :[0,\infty
)\rightarrow \lbrack 0,\infty )$ satisfying $\phi \left( \infty \right)
:=\lim\limits_{x\rightarrow \infty }\phi \left( x\right) =\infty $ and let $%
0<p,q<\infty $. By $Y\left[ p,q\right] $ we denote the class of even
functions $\varphi \in \Psi $ satisfying \textbf{(i)} $\varphi \left(
u\right) /u^{p}$ is non-decreasing as $\left\vert u\right\vert $ increases; 
\textbf{(ii)} $\varphi \left( u\right) /u^{q}$ is non-increasing as $%
\left\vert u\right\vert $ increases.

The other refinement of (\ref{dirr}) is given in \textsc{Theorem} \ref{proc}%
: Let $\varphi \in Y\left[ p,q\right] $ for some $1<p,q<\infty $ and $%
\varphi \in \Delta _{2}$, $\varphi ^{\theta }$ is quasiconvex for some $%
\theta \in \left( 0,1\right) $. If $\beta :=\max \left\{ 2,q\right\} $ and $%
k\in \mathbb{R}^{+}$, $W\in A_{p\left( \varphi \right) }$, $f\in
L_{W}^{\varphi }$, then there exist positive constant $C_{k,W,\varphi }$%
\emph{\ }depending only on $k$, $\varphi $ and $W$ such that the refined
Jackson inequality%
\begin{equation*}
\frac{1}{n^{2k}}\left\{ \sum\limits_{\nu =1}^{n}\nu ^{2\beta k-1}E_{\nu
-1}^{\beta }\left( f\right) _{\varphi ,W}\right\} ^{1/\beta }\leq
C_{k,W,\varphi }\Omega _{k}\left( f,\frac{1}{n}\right) _{\varphi ,W}
\end{equation*}%
holds for any $n\in \mathbb{N}$.

To obtain the last inequality we require the following three results. The
first one is the Extrapolation \textsc{Theorem} \ref{ET}: Let $\varphi \in
\Delta _{2}$, $\varphi $ is quasiconvex, $\varphi \left( t^{1/p_{0}}\right) $
be a convex function for some $p_{0}\in \left( 1,\infty \right) $. Let $%
\mathcal{F}$ be a family of couples of nonnegative functions such that%
\begin{equation*}
\int\limits_{\mathsf{T}}F\left( x\right) ^{p_{0}}W\left( x\right) dx\leq
C\int\limits_{\mathsf{T}}g\left( x\right) ^{p_{0}}W\left( x\right) dx\text{%
,\quad }\left( F,g\right) \in \mathcal{F}
\end{equation*}%
for all $W\in A_{1}$ provided the left hand side is finite. Then, for all $%
W\in A_{p\left( \varphi \right) }$, the inequality%
\begin{equation*}
\int\limits_{\mathsf{T}}\varphi \left( F\left( x\right) \right) W\left(
x\right) dx\leq C\int\limits_{\mathsf{T}}\varphi \left( g\left( x\right)
\right) W\left( x\right) dx
\end{equation*}%
holds for any $\left( F,g\right) \in \mathcal{F}$ when the left-hand side is
finite.

The second is the Marcinkiewicz multiplier \textsc{Theorem} \ref{mmt}: Let a
sequence $\left\{ \lambda _{l}\right\} $\ of real numbers be satisfy the
properties%
\begin{equation*}
\left\vert \lambda _{l}\right\vert \leq A\text{,\quad }\sum%
\limits_{l=2^{m-1}}^{2^{m}-1}\left\vert \lambda _{l}-\lambda
_{l+1}\right\vert \leq A
\end{equation*}%
for all $l,m\in \mathbb{N}$, where $A$\ doesn't depend on $l$\ and $m$. If $%
\varphi \in \Delta _{2}$, $\varphi ^{\theta }$ is quasiconvex for some $%
\theta \in \left( 0,1\right) $, then there is a function $G\in
L_{W}^{\varphi }$\emph{\ }such that the series $\sum\nolimits_{k=0}^{\infty
}\lambda _{k}A_{k}\left( \cdot ,f\right) $\ is Fourier series of $G$\ and%
\begin{equation*}
\left\Vert G\right\Vert _{\varphi ,W}\leq C\left\Vert f\right\Vert _{\varphi
,W}
\end{equation*}%
holds with a positive constant $C$\ does not depend on $f$.

The third one is the Littlewood Paley type \textsc{Theorem} \ref{lpt}: If $%
\varphi \in \Delta _{2}$ is quasiconvex, $\varphi \left( t^{1/p_{0}}\right) $
is a convex function for some $p_{0}\in \left( 1,\infty \right) $, then
there are positive constants $c_{\varphi ,W},C_{\varphi ,W}$ depending only
on $\varphi $ and $W$ such that%
\begin{equation*}
c_{\varphi ,W}\left\Vert \left( \sum\limits_{l=0}^{\infty }\left\vert
\triangledown _{l}\right\vert ^{2}\right) ^{1/2}\right\Vert _{\varphi
,W}\leq \left\Vert f\right\Vert _{\varphi ,W}\leq C_{\varphi ,W}\left\Vert
\left( \sum\limits_{l=0}^{\infty }\left\vert \triangledown _{l}\right\vert
^{2}\right) ^{1/2}\right\Vert _{\varphi ,W}
\end{equation*}%
where $f\left( \cdot \right) \sim \sum\limits_{j=0}^{\infty }A_{j}\left(
\cdot ,f\right) $ and%
\begin{equation}
\triangledown _{l}:=\triangledown _{l}\left( \cdot ,f\right)
:=\sum\limits_{\nu =2^{l-1}}^{2^{l}-1}A_{\nu }\left( \cdot ,f\right) \text{%
,\quad }A_{1/2}\left( \cdot ,f\right) :=0.  \label{kla}
\end{equation}%
Refined inverse inequality of Marchaud type is also obtained (see \textsc{%
Corollary} \ref{corlll}): Suppose that $\varphi \in Y\left[ p,q\right] $ for
some $1<p,q<\infty $ and $\varphi \in \Delta _{2}$, $\varphi ^{\theta }$ is
quasiconvex for some $\theta \in \left( 0,1\right) $. If $\beta :=\max
\left\{ 2,q\right\} $ and $k\in \mathbb{R}^{+}$, $W\in A_{p\left( \varphi
\right) }$, $f\in L_{W}^{\varphi },$ then, there exists a\emph{\ }positive
constant $C_{l,k,W,\varphi }$ depending only on $l$, $k$, $\varphi $ and $W$
such that

\begin{equation*}
t^{2k}\left\{ \int\limits_{t}^{1}\left[ \frac{\Omega _{l}\left( f,u\right)
_{\varphi ,W}}{u^{2k}}\right] ^{\beta }\frac{du}{u}\right\} ^{1/\beta }\leq
C_{l,k,W,\varphi }\Omega _{k}\left( f,t\right) _{\varphi ,W}
\end{equation*}%
holds for $k,l\in \mathbb{R}^{+}$, $k<l$ and $0<t\leq 1/2$.

Jackson's second type direct theorem is a generalization of the inequality (%
\ref{dirr}) given in \textsc{Corollary} \ref{secotype}: Under the conditions
(\ref{kosul}), $\alpha \in \mathbb{R}^{+}$ and $f^{\left( \alpha \right)
}\in L_{W}^{\varphi }$, there exists a constant $C_{k,W,\varphi }\in \mathbb{%
R}^{+}$\ depending only on $k$, $\varphi $ and $W$\ such that%
\begin{equation*}
n^{\alpha }E_{n}\left( f\right) _{\varphi ,W}\leq C_{k,W,\varphi }\Omega
_{k}\left( f^{\left( \alpha \right) },\frac{1}{n}\right) _{\varphi ,W}
\end{equation*}%
holds for every $n\in \mathbb{N}$.

Another important property of the modulus of smoothness $\Omega _{k}\left(
f,\cdot \right) _{\varphi ,W}$ is that equivalence with Realization
functional%
\begin{equation*}
R_{l}\left( f,1/n,L_{W}^{\varphi }\right) :=\left\Vert f-t_{n}^{\ast
}(f)\right\Vert _{\varphi ,W}+n^{-l}\left\Vert \left( t_{n}^{\ast
}(f)\right) ^{\left( l\right) }\right\Vert _{\varphi ,W},
\end{equation*}%
where $n\in \mathbb{N}$, $l\in \mathbb{R}^{+}$, $t_{n}^{\ast }(f)\in 
\mathcal{T}_{n}$ be such that $\left\Vert f-t_{n}^{\ast }(f)\right\Vert
_{\varphi ,W}\leq CE_{n}\left( f\right) _{\varphi ,W}$. This equivalence is
given in (\ref{real}): Under the conditions (\ref{kosul}) we have%
\begin{equation*}
\Omega _{k}\left( f,1/n\right) _{\varphi ,W}\approx R_{2k}\left(
f,1/n,L_{W}^{\varphi }\right) \text{,\quad }n\in \mathbb{N}\text{,}
\end{equation*}%
where the equivalence constants are depend only on $k,\varphi $ and $W.$

The last equivalence with Realization functional yields the equivalence of
the fractional order weighted modulus of smoothness $\Omega _{k}\left(
f,\cdot \right) _{\varphi ,W}$ with the classical weighted Peetre's $K$%
-functional%
\begin{equation}
K_{l}\left( f,t,L_{W}^{\varphi }\right) :=\underset{g\in W_{\varphi ,W}^{l}}{%
\inf }\left\{ \left\Vert f-g\right\Vert _{\varphi ,W}+t^{l}\left\Vert
g^{\left( l\right) }\right\Vert _{\varphi ,W}\right\} \text{,\quad }t>0\text{%
,}  \label{kfunct}
\end{equation}%
where $W_{\varphi ,W}^{l}$ is the corresponding Sobolev space.

We obtained in (\ref{Kf}) that if the conditions (\ref{kosul}) are hold, then%
\begin{equation*}
\Omega _{k}\left( f,\delta \right) _{\varphi ,W}\approx K_{2k}\left(
f,\delta ,L_{W}^{\varphi }\right) \text{,\quad }\delta >0
\end{equation*}%
with the equivalence constants are depend only on $k,\varphi $ and $W.$ Here
and in what follows, $A\lesssim B$ means that there exists a constant $C$,
independent of essential parameters, such that the inequality $A\leq CB$ is
hold. If $A\lesssim B$ and $B\lesssim A$ simultaneously, we will write $%
A\approx B$. Throughout this work by $C_{a,b,c,...}$ we denote positive
constants, depending only on the parameters $a,b,c,...$, which can be
different values at different places.

In \S 2 basic properties of weighs and weighted Orlicz spaces. \S 3 includes
informations about three types transference results. In \S 4 we formulate
basic properties of the modulus of smoothness and a Marcinkiewicz type
interpolation theorem. \S 5 contains Jackson type direct theorem for
functions having Weyl's fractional derivative of degree $\alpha \geq 0$. In 
\S 6 we give some refinements of the Jackson inequality. Also we prove an
extrapolation theorem, Marcinkiewicz type multiplier theorem and
Littlewood-Paley type theorem. Realization theorem and an equivalence of the
weighted Peetre's $K$-functional with the fractional order weighted modulus
of smoothness are proved in \S 7. The last section \S 8 contains proofs.

\section{Orlicz spaces with Muckenhoupt weights}

A function $\omega :\mathsf{T}\mathbb{\rightarrow }\left[ 0,\infty \right] $
will be called weight if $\omega $ is measurable and positive a.e. on $%
\mathsf{T}.$ A 2$\pi $-periodic weight $\omega $ belongs to the Muckenhoupt
class $A_{p}$, $1\leq p<\infty $, if%
\begin{equation}
\left( meas(J)\right) ^{-1}\omega \left( J\right) \lesssim Cessinf%
_{x\in J}\omega \left( x\right) \text{, a.e. on }\mathsf{T}\text{%
,\quad }\left( p=1\right) \text{,}  \label{a1123}
\end{equation}%
\begin{equation}
C:=\sup\limits_{J}\frac{\omega \left( J\right) }{meas(J)}\left( \frac{\left[
\omega \left( J\right) \right] ^{\frac{1}{1-p}}}{meas(J)}\right)
^{p-1}<\infty \text{,\quad }\left( 1<p<\infty \right)   \label{ap}
\end{equation}%
with some finite constant independent of $J$, where $J$ is any subinterval
of $\mathsf{T}$ and $\omega (A):=\int_{A}\omega (t)dt$ for $A\subset \mathsf{%
T}$. The least constant $C$ in (\ref{a1123}) and (\ref{ap}) will be denoted
by $\left[ \omega \right] _{p}$ for $1\leq p<\infty .$

Let $\varphi $ be a quasiconvex function. We denote by $L_{W}^{\varphi }$
the class of Lebesgue measurable functions $f:\mathsf{T}\rightarrow \mathbb{R%
}$ satisfying the condition%
\begin{equation*}
\int_{\mathsf{T}}\varphi \left( C\left\vert f\left( x\right) \right\vert
\right) W\left( x\right) dx<\infty
\end{equation*}%
for some $C\in \mathbb{R}^{+}$. The collection of functions in $%
L_{W}^{\varphi }$ becomes a normed space with the Orlicz\textit{\ }norm%
\begin{equation*}
\left\Vert f\right\Vert _{\varphi ,W}:=\sup \left\{ \int\limits_{\mathsf{T}%
}\left\vert f\left( x\right) g\left( x\right) \right\vert W\left( x\right)
dx:\int\limits_{\mathsf{T}}\tilde{\varphi}\left( \left\vert g\right\vert
\right) W\left( x\right) dx\leq 1\right\}
\end{equation*}%
where%
\begin{equation}
\tilde{\varphi}\left( y\right) :=\sup_{x\geq 0}\left\{ xy-\varphi \left(
x\right) \right\} ,\quad y\geq 0,  \label{comp}
\end{equation}%
is the \textit{complementary function} of $\varphi $. The Banach space $%
L_{W}^{\varphi }$ is called \textit{Orlicz space}. For a weight $W$, taking $%
\varphi _{p}:=\varphi \left( x,p\right) :=x^{p}$, $1\leq p<\infty $, we
obtain the classical weighted Lebesgue space $L_{W}^{p}:=L_{W}^{\varphi
_{p}} $. If $\varphi $ is quasiconvex and $\tilde{\varphi}$ is its
complementary function, then \textit{Young's inequality} holds:%
\begin{equation}
xy\leq \varphi \left( x\right) +\tilde{\varphi}\left( y\right) \text{,\quad }%
x,y\geq 0\text{.}  \label{Young}
\end{equation}%
We define the \textit{Kolmogorov-Minkowski }functional as%
\begin{equation*}
\left\Vert f\right\Vert _{\left( \varphi \right) ,\omega }:=\inf \left\{
\tau >0:\int\limits_{\mathsf{T}}\varphi \left( \frac{\left\vert f\left(
x\right) \right\vert }{\tau }\right) W\left( x\right) dx\leq 1\right\} \text{%
.}
\end{equation*}%
There exist (\cite[p. 23]{GGKK}) constants $c$ and $C>0$ such that%
\begin{equation}
c\left\Vert f\right\Vert _{\left( \varphi \right) ,W}\leq \left\Vert
f\right\Vert _{\varphi ,W}\leq C\left\Vert f\right\Vert _{\left( \varphi
\right) ,W}\text{.}  \label{fac}
\end{equation}

A Young function $\varphi $ is said to be satisfy (global) $\Delta _{2}$ 
\textit{condition} if there is a constant $C\in \mathbb{R}^{+}$ such that $%
\varphi \left( 2x\right) \leq C\varphi \left( x\right) $ for all $x\in 
\mathbb{R}$. We will denote by $Q$ the class of functions $g$ satisfying $%
\Delta _{2}$ condition such that $g^{\theta }$ is quasiconvex for some $%
\theta \in \left( 0,1\right) $. For a quasiconvex function $\varphi $ we
give the definition of (\cite{gk94}) the indice $p\left( \varphi \right) $
of $\varphi $ as in (\ref{indic}). We set, as usual,%
\begin{equation*}
\left( p\left( \varphi \right) \right) ^{\prime }:=\frac{p\left( \varphi
\right) }{p\left( \varphi \right) -1}\text{ for }p\left( \varphi \right) >1%
\text{ and }W_{\ast }:=W^{1-\left( p\left( \varphi \right) \right) ^{\prime
}}\text{.}
\end{equation*}%
It is known that, if $W\in A_{p\left( \varphi \right) }$, then, $W_{\ast
}\in A_{\left( p\left( \varphi \right) \right) ^{\prime }}$.

We have the following variant of H\"{o}lder's inequality:

\begin{theorem}
\label{lbir}If $\varphi \in Q$, $W\in A_{p\left( \varphi \right) }$, $f\in
L_{W}^{\varphi }$, and $g\in L_{W_{\ast }}^{\tilde{\varphi}}$, then, there
exist an $v>1$ and a constant $c>1$, depending only on $\varphi $,$W$, such
that%
\begin{equation}
L^{\infty },C\left( \mathsf{T}\right) \hookrightarrow L_{W}^{\varphi
}\hookrightarrow L^{v}\hookrightarrow L^{1}\text{\quad and}  \label{cvf}
\end{equation}%
\begin{equation}
\int\limits_{\mathsf{T}}\left\vert f\left( x\right) g\left( x\right)
\right\vert dx\leq c\left\Vert f\right\Vert _{\varphi ,W}\left\Vert
g\right\Vert _{\tilde{\varphi},W_{\ast }}  \label{Hld}
\end{equation}%
hold.
\end{theorem}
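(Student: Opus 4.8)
The plan is to reduce everything to the known theory of Muckenhoupt weights on weighted Lebesgue spaces, exploiting the indice $p(\varphi)$. First I would establish the embedding chain \eqref{cvf}. Since $\varphi\in Q$, by definition of $p(\varphi)$ in \eqref{indic} there is a $p>p(\varphi)$ (hence $p$ arbitrarily close to $p(\varphi)$ from above) for which $\varphi^{1/p}$ is quasiconvex, equivalently $\varphi(t)\gtrsim t^{p}$ for large $t$; by a standard normalization one may also assume $\varphi(1)=1$ so that $\varphi(t)\geq c\,t^{v}$ for all $t\geq 1$ with some $v>1$ and $c>0$. I would then pick $v>1$ with $v<p(\varphi)$ if $p(\varphi)>1$, or simply $1<v$ small; the precise choice is dictated by the requirement $W\in A_{p(\varphi)}\subset A_{v'}$ below. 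The inclusion $L_W^{\varphi}\hookrightarrow L^{v}$ follows by comparing $\varphi$ with $t^{v}$: for $f$ with $\|f\|_{\varphi,W}\leq 1$ one has $\int_{\mathsf{T}}\varphi(|f|)W\,dx\lesssim 1$ (using \eqref{fac}), hence $\int_{|f|\geq 1}|f|^{v}W\,dx\lesssim 1$, while on $\{|f|<1\}$ the integral is bounded by $W(\mathsf{T})<\infty$; so $\int_{\mathsf{T}}|f|^{v}W\,dx<\infty$. To pass from $L^{v}_{W}$ to $L^{v}$ (unweighted) and then to $L^{1}$, I would use that $W\in A_{p(\varphi)}$ implies $W^{-1/(v-1)}$ is locally integrable for suitable $v$ — more directly, apply Hölder with exponents $v/v_{0}$ and its conjugate, using $W^{-s}\in L^{1}(\mathsf{T})$ for some $s>0$ (a consequence of the reverse Hölder / $A_{\infty}$ property of Muckenhoupt weights). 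The boundedness $L^{\infty},C(\mathsf{T})\hookrightarrow L_W^{\varphi}$ is immediate from $\Delta_2$ and $W(\mathsf{T})<\infty$.

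Next, for the Hölder inequality \eqref{Hld}: the clean route is to write, for $x\in\mathsf{T}$,
\begin{equation*}
|f(x)g(x)| = \bigl(|f(x)|\,W(x)^{1/p(\varphi)}\bigr)\bigl(|g(x)|\,W(x)^{-1/p(\varphi)}\bigr)\cdot\text{(nothing)} ,
\end{equation*}
but since we are integrating against $dx$ and not $W\,dx$, I would instead observe
\begin{equation*}
\int_{\mathsf{T}}|f g|\,dx=\int_{\mathsf{T}}\bigl(|f|\,W\bigr)\bigl(|g|\,W_{\ast}\bigr)^{?}\cdots
\end{equation*}
— more carefully: set $u=|f|W^{1/p(\varphi)}W^{-1/p(\varphi)}$; the correct pairing is to split the Lebesgue measure as $dx=W^{a}\,W^{-a}\,dx$ is not what is needed. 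The actual mechanism is the generalized Hölder inequality in Orlicz spaces: $\int_{\mathsf{T}}|FG|\,d\mu\leq 2\|F\|_{(\varphi),\mu}\|G\|_{\tilde\varphi,\mu}$ for any measure $\mu$. Applying this with $d\mu=W\,dx$ gives $\int_{\mathsf{T}}|fg|W\,dx\leq 2\|f\|_{(\varphi),W}\|g\|_{\tilde\varphi,W}$. To get the stated form with $W_{\ast}$ on the $g$-side and plain $dx$ on the left, I would write $\int|fg|\,dx=\int (|f|)(|g| W W^{-1})\,dx$ and regroup as $\int(|f| W^{1/p(\varphi)}\cdot W^{-1/p(\varphi)})(|g| W^{1/p(\varphi)'}\cdot W^{-1/p(\varphi)'})\,dx$; but the more robust argument is: by Young's inequality \eqref{Young} applied pointwise to $x=|f(x)|W(x)^{\alpha}$, $y=|g(x)|W(x)^{-\alpha}$ after a homogeneity normalization $\|f\|_{\varphi,W}=\|g\|_{\tilde\varphi,W_{\ast}}=1$, and integrating, one obtains $\int_{\mathsf{T}}|fg|\,dx\leq \int\varphi(|f|)W\,dx+\int\tilde\varphi(|g|)W_{\ast}\,dx\lesssim 1$ provided the exponents match, i.e. one needs $W^{\alpha}$ paired with $\varphi$ and $W^{-\alpha}$ paired with $\tilde\varphi$ to reproduce the weights $W$ and $W_{\ast}=W^{1-p(\varphi)'}$ — which fixes $\alpha$ and uses exactly the relation between $p(\varphi)$ and $p(\varphi)'$. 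The constant $c>1$ then absorbs the $\Delta_2$-comparison and \eqref{fac}.

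I expect the main obstacle to be the bookkeeping in the second step — namely verifying that the pointwise factorization of $dx$ through powers of $W$ is compatible simultaneously with the quasiconvex function $\varphi$ (not genuinely convex, only $\varphi^{\theta}\in QC$) and with its complementary function $\tilde\varphi$ on the $W_{\ast}$ side. The quasiconvexity forces one to first replace $\varphi$ by an equivalent convex $\Phi$ (with $\Phi\leq\varphi\leq\Phi(C\cdot)$), run the Young-inequality argument for $\Phi$ and $\tilde\Phi$, and then transfer back, tracking how the $\Delta_2$ constant and the comparison constant $C$ enter. The fact that $W\in A_{p(\varphi)}\Rightarrow W_{\ast}\in A_{p(\varphi)'}$ (quoted just before the theorem) is what guarantees both $W$ and $W_{\ast}$ are genuine weights (a.e. finite and positive, locally integrable), which is needed for the Orlicz norms on the right-hand side to be finite and for the embeddings in \eqref{cvf} to make sense. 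Once the convex case is in hand, the rest is routine: the embeddings follow from the growth estimate $\varphi(t)\gtrsim t^{v}$ together with $W(\mathsf{T})<\infty$ and the reverse-Hölder integrability of a negative power of $W$, and \eqref{Hld} is the integrated Young inequality with the homogeneity scaling restored.
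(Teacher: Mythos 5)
Your handling of the embeddings \eqref{cvf} is essentially the paper's own argument (choose $v<p(\varphi)$ with $\varphi^{1/v}$ quasiconvex so that $\varphi(t)\gtrsim t^{v}$ for $t\geq 1$, normalize, scale, and pass to the unweighted space via H\"older and the integrability of a negative power of $W$ coming from the $A_{p}$ condition); note only that your opening claim that $\varphi^{1/p}$ is quasiconvex for some $p>p(\varphi)$ has the inequality backwards, which you implicitly correct when you later pick $v<p(\varphi)$. The genuine gap is in your proof of \eqref{Hld}. The mechanism you rely on --- applying Young's inequality \eqref{Young} pointwise to $x=|f(x)|W(x)^{\alpha}$, $y=|g(x)|W(x)^{-\alpha}$ and choosing $\alpha$ so that ``$W^{\alpha}$ paired with $\varphi$ and $W^{-\alpha}$ paired with $\tilde{\varphi}$ reproduce the weights $W$ and $W_{\ast}$'' --- requires the pointwise identities $\varphi\left(|f|W^{\alpha}\right)\approx\varphi\left(|f|\right)W$ and $\tilde{\varphi}\left(|g|W^{-\alpha}\right)\approx\tilde{\varphi}\left(|g|\right)W_{\ast}$, uniformly in the value of $W(x)$, and these hold only when $\varphi$ is (equivalent to) a pure power $t^{p}$, in which case $\alpha=1/p$ works by homogeneity. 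A general quasiconvex Young function has no such homogeneity, and replacing $\varphi$ by an equivalent convex $\Phi$ with $\Phi\leq\varphi\leq\Phi(C\cdot)$ does not restore it, so no choice of $\alpha$ turns the integrated Young inequality into a bound by the modulars $\int\varphi(|f|)W\,dx$ and $\int\tilde{\varphi}(|g|)W_{\ast}\,dx$. Your alternative, the generalized Orlicz--H\"older inequality with $d\mu=W\,dx$, fails for the same structural reason: it produces $\|g\|_{\tilde{\varphi},W}$ on the right and $W\,dx$ on the left, and there is no pointwise way to trade the measure $W\,dx$ for $dx$ inside a non-homogeneous $\varphi$.

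The paper circumvents exactly this obstacle by splitting $\mathsf{T}$ into the four sets determined by $|f|\leq 1$ or $|f|>1$ and $|g|\leq 1$ or $|g|>1$. On the three sets where at least one factor is at most $1$ the product is dominated by $|f|$ or by $|g|$ alone, and the estimate reduces to the unweighted $L^{1}$ bounds $\int_{\mathsf{T}}|f|\,dx\lesssim\|f\|_{\varphi,W}$ and $\int_{\mathsf{T}}|g|\,dx\lesssim\|g\|_{\tilde{\varphi},W_{\ast}}$ (the latter using $W_{\ast}\in A_{p(\tilde{\varphi})}$), i.e.\ precisely the first half of your argument. Only on the set where both $|f|>1$ and $|g|>1$ is the Lebesgue measure factored through powers of $W$, and there the paper uses the classical H\"older inequality with the exponent $p\left(\varphi\right)$,
\begin{equation*}
\int\left\vert fg\right\vert dx\leq\left( \int\left\vert f\right\vert
^{p\left( \varphi \right) }W\,dx\right) ^{\frac{1}{p\left( \varphi \right) }
}\left( \int\left\vert g\right\vert ^{\left( p\left( \varphi \right) \right)
^{\prime }}W_{\ast }\,dx\right) ^{\frac{1}{\left( p\left( \varphi \right)
\right) ^{\prime }}},
\end{equation*}
where homogeneity is available because the power functions, not $\varphi$ itself, carry the weight; the comparison of $\varphi$ and $\tilde{\varphi}$ with these powers on the set where the functions exceed $1$, together with a scaling argument, then gives \eqref{Hld}. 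Some such region-splitting (or another substitute for the failed weight factorization) is what your proposal is missing.
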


If $\varphi \in Q$, $W\in A_{p\left( \varphi \right) }$, then the
Hardy-Littlewood Maximal function $M$ is a norm bounded (\cite[Theorem I]%
{gk94}) operator in $L_{W}^{\varphi }$. Using this it is obtained the
following lemma.

\begin{lemma}
\label{l2}(\cite{ai11})If $\varphi \in Q$, $W\in A_{p\left( \varphi \right)
} $, $f\in L_{W}^{\varphi }$, then there is a positive constant $c_{\varphi
,W} $, depending only on $\varphi $ and $W$, such that%
\begin{equation}
\left\Vert \mathcal{A}_{h}f\right\Vert _{\varphi ,W}\leq c_{\varphi
,W}\left\Vert f\right\Vert _{\varphi ,W}  \label{bou}
\end{equation}%
holds for any $h>0$.
\end{lemma}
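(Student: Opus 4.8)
The plan is to dominate $\mathcal{A}_{h}f$ pointwise by the Hardy--Littlewood maximal function $M$ and then invoke the known boundedness of $M$ on $L_{W}^{\varphi }$. First I would record that, after the substitution $u=x+t$,
\[
\mathcal{A}_{h}f\left( x\right) =\frac{1}{h}\int_{x-h/2}^{x+h/2}f\left( u\right) du ,
\]
so that $\mathcal{A}_{h}f\left( x\right) $ is the integral mean of $f$ over the interval of length $h$ centred at $x$. Hence
\[
\left\vert \mathcal{A}_{h}f\left( x\right) \right\vert \leq \frac{1}{h}\int_{x-h/2}^{x+h/2}\left\vert f\left( u\right) \right\vert du\leq Mf\left( x\right) \text{\quad a.e. on }\mathsf{T},
\]
directly from the definition of $M$. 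When $h$ exceeds the period one uses $2\pi $-periodicity of $f$ to reduce to averages over intervals of length at most $2\pi $; the resulting average is then comparable to $\left( meas(\mathsf{T})\right) ^{-1}\int_{\mathsf{T}}\left\vert f\right\vert \,dx$, which is $\lesssim \left\Vert f\right\Vert _{\varphi ,W}$ by the H\"{o}lder inequality of \textsc{Theorem} \ref{lbir}, and is in particular still dominated by $Mf$. This periodicity bookkeeping is the only point that requires a moment's care.

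Next I would use the monotonicity of the Orlicz norm: if $0\leq g_{1}\leq g_{2}$ a.e. on $\mathsf{T}$, then $\left\Vert g_{1}\right\Vert _{\varphi ,W}\leq \left\Vert g_{2}\right\Vert _{\varphi ,W}$, which is immediate from the definition of $\left\Vert \cdot \right\Vert _{\varphi ,W}$ as a supremum of the integrals $\int_{\mathsf{T}}\left\vert g\,\psi \right\vert W\,dx$ over admissible $\psi \geq 0$. Applying this with $g_{1}=\left\vert \mathcal{A}_{h}f\right\vert $ and $g_{2}=Mf$ gives $\left\Vert \mathcal{A}_{h}f\right\Vert _{\varphi ,W}\leq \left\Vert Mf\right\Vert _{\varphi ,W}$, uniformly in $h>0$.

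Finally, since $\varphi \in Q$ and $W\in A_{p\left( \varphi \right) }$, the Hardy--Littlewood maximal operator is norm bounded on $L_{W}^{\varphi }$ by \cite[Theorem I]{gk94}, that is, $\left\Vert Mf\right\Vert _{\varphi ,W}\leq c_{\varphi ,W}\left\Vert f\right\Vert _{\varphi ,W}$ with $c_{\varphi ,W}$ depending only on $\varphi $ and $W$. Chaining the three estimates yields (\ref{bou}). The only genuine obstacle is the maximal-function bound itself, but that is precisely the cited theorem, whose hypotheses are exactly those assumed here; the remaining steps are elementary.
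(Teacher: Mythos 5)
Your proof is correct and follows essentially the same route the paper indicates: the pointwise bound $\left\vert \mathcal{A}_{h}f\right\vert \leq Mf$ a.e. (stated in the introduction), monotonicity of the Orlicz norm, and the boundedness of the Hardy--Littlewood maximal operator on $L_{W}^{\varphi }$ from \cite[Theorem I]{gk94}, which is exactly how the paper (following \cite{ai11}) obtains Lemma \ref{l2}. No changes needed.
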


\section{Some Transference Results}

We will mention here three transference results. We start with a
transference result obtained in \cite{akgArx} for weighted Lebesgue space $%
L_{W}^{p}$ with $1\leq p<\infty $, $W\in A_{p}$. We denote by $C^{\infty }$,
collection of continuous functions $g$ on $\mathsf{T}$ such that $g$ is
differentiable with arbitrary order on $\mathsf{T}$. Also let $C\left( 
\mathsf{T}\right) $ be class of continuous functions on $\mathsf{T}$.

\begin{definition}
\label{dd1}(\cite{akgArx})Suppose $1\leq p<\infty $, $W\in A_{p}$, $f\in
L_{W}^{p}$,%
\begin{equation*}
p^{\prime }:=\left\{ 
\begin{tabular}{cc}
$\frac{p}{p-1}$ & for $p>1,$ \\ 
$\infty $ & for $p=1,$%
\end{tabular}%
\right. \quad W^{\prime }:=\left\{ 
\begin{tabular}{cc}
$W^{1-p^{\prime }}$ & for $p>1,$ \\ 
$1$ & for $p=1.$%
\end{tabular}%
\right.
\end{equation*}%
For an $G\in L_{p^{\prime },W^{\prime }}\cap C^{\infty }$,\quad $\left\Vert
G\right\Vert _{p^{\prime },W^{\prime }}\leq 1$ we define%
\begin{equation}
F_{f,G}\left( u\right) :=\int\nolimits_{\mathsf{T}}f\left( x+u\right)
\left\vert G\left( x\right) \right\vert dx,\quad u\in \mathsf{T}\text{.}
\label{fbb}
\end{equation}
\end{definition}

\begin{theorem}
\label{Fu}(\cite{akgArx})If $1\leq p<\infty $, $W\in A_{p}$ and $f\in
L_{W}^{p}$, then the function $F_{f,G}\left( u\right) $, defined in (\ref%
{fbb}), is uniformly continuous on $\mathsf{T}.$
\end{theorem}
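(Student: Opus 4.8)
The plan is to reduce the whole statement to the classical continuity of translations in $L^{1}\left( \mathsf{T}\right) $, after two preliminary observations: that $f\in L_{W}^{p}$ with $W\in A_{p}$ already forces $f\in L^{1}\left( \mathsf{T}\right) $, and that $\left\vert G\right\vert $ is bounded. For the first, consider the case $p=1$ separately: applying the $A_{1}$ condition (\ref{a1123}) with the interval $J=\mathsf{T}$ and using $W\left( \mathsf{T}\right) >0$ one gets $\operatorname{ess\,inf}_{\mathsf{T}}W>0$, so $W^{-1}\in L^{\infty }\left( \mathsf{T}\right) $ and $\int_{\mathsf{T}}\left\vert f\right\vert dx\leq \left\Vert W^{-1}\right\Vert _{\infty }\int_{\mathsf{T}}\left\vert f\right\vert W\,dx<\infty $. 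For $p>1$ I would instead write $\left\vert f\right\vert =\left( \left\vert f\right\vert W^{1/p}\right) W^{-1/p}$ and apply the ordinary H\"{o}lder inequality with exponents $p,p^{\prime }$, obtaining
\begin{equation*}
\int_{\mathsf{T}}\left\vert f\right\vert dx\leq \left\Vert f\right\Vert _{p,W}\left( \int_{\mathsf{T}}W^{-p^{\prime }/p}dx\right) ^{1/p^{\prime }}=\left\Vert f\right\Vert _{p,W}\left( \int_{\mathsf{T}}W^{\prime }dx\right) ^{1/p^{\prime }};
\end{equation*}
since $W\in A_{p}$ is equivalent to $W^{\prime }=W^{1-p^{\prime }}\in A_{p^{\prime }}$, in particular $W^{\prime }\in L^{1}\left( \mathsf{T}\right) $, so the last integral is finite and $f\in L^{1}\left( \mathsf{T}\right) $. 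In all cases this also shows $\left\vert F_{f,G}\left( u\right) \right\vert \leq \left\Vert G\right\Vert _{\infty }\left\Vert f\right\Vert _{L^{1}}<\infty $, so $F_{f,G}$ is a genuine finite-valued function on $\mathsf{T}$.

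Next, since $G\in C^{\infty }$ is continuous on the compact set $\mathsf{T}$, the constant $M:=\left\Vert G\right\Vert _{\infty }$ is finite; note that only $G\in C^{\infty }$ is used here, the normalization $\left\Vert G\right\Vert _{p^{\prime },W^{\prime }}\leq 1$ playing no role for this statement. Then for $u,v\in \mathsf{T}$, using $\left\vert G\right\vert \leq M$, the $2\pi $-periodicity of $f$ and $G$, and translation invariance of Lebesgue measure,
\begin{equation*}
\left\vert F_{f,G}\left( u\right) -F_{f,G}\left( v\right) \right\vert \leq \int_{\mathsf{T}}\left\vert f\left( x+u\right) -f\left( x+v\right) \right\vert \left\vert G\left( x\right) \right\vert dx\leq M\int_{\mathsf{T}}\left\vert f\left( x+u-v\right) -f\left( x\right) \right\vert dx,
\end{equation*}
so that $\left\vert F_{f,G}\left( u\right) -F_{f,G}\left( v\right) \right\vert \leq M\,\eta \left( \left\vert u-v\right\vert \right) $ where $\eta \left( \delta \right) :=\sup_{\left\vert h\right\vert \leq \delta }\int_{\mathsf{T}}\left\vert f\left( x+h\right) -f\left( x\right) \right\vert dx$ is the $L^{1}$ modulus of continuity of $f$.

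Finally, because $f\in L^{1}\left( \mathsf{T}\right) $ by the first step, the classical continuity of translation in $L^{1}$ gives $\eta \left( \delta \right) \rightarrow 0$ as $\delta \rightarrow 0^{+}$ (approximate $f$ in $L^{1}\left( \mathsf{T}\right) $ by elements of $C\left( \mathsf{T}\right) $, which are uniformly continuous on the compact $\mathsf{T}$). Since the bound $M\,\eta \left( \left\vert u-v\right\vert \right) $ depends on $u,v$ only through $\left\vert u-v\right\vert $, it is uniform, and therefore $F_{f,G}$ is uniformly continuous on $\mathsf{T}$. The only step needing any care is the integrability reduction: one must recall that a $2\pi $-periodic $A_{1}$ weight is bounded away from zero and that the $A_{p}$--$A_{p^{\prime }}$ duality yields $W^{\prime }\in L^{1}\left( \mathsf{T}\right) $ for $p>1$; the rest is routine.
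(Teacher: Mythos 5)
Your proof is correct: the reduction of $\left\vert F_{f,G}(u)-F_{f,G}(v)\right\vert$ to $\left\Vert G\right\Vert _{\infty }\int_{\mathsf{T}}\left\vert f(x+u-v)-f(x)\right\vert dx$ is valid, and your two preliminary facts (that $A_{1}$ weights are essentially bounded below, and that H\"{o}lder with $W^{\prime }=W^{1-p^{\prime }}\in L^{1}\left( \mathsf{T}\right) $ gives $L_{W}^{p}\hookrightarrow L^{1}$ for $p>1$) correctly justify invoking continuity of translation in $L^{1}\left( \mathsf{T}\right) $. The paper itself omits the proof, deferring to \cite{akgArx}, but your route is exactly the standard one and matches the machinery the paper uses in the analogous places (the bound $\left\Vert f\right\Vert _{1}\lesssim \left\Vert f\right\Vert _{\varphi ,W}$ in the proofs of Theorem \ref{lbir} and Theorem \ref{traX}), so you may regard it as essentially the same approach.
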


\begin{theorem}
\label{tra}(\cite{akgArx})Let $1\leq p<\infty $, $W\in A_{p}$, $f$, $g\in
L_{W}^{p}$. In this case, if%
\begin{equation*}
\left\Vert F_{g,G}\right\Vert _{C\left( \mathsf{T}\right) }\leq \mathbf{C}%
\left\Vert F_{f,G}\right\Vert _{C\left( \mathsf{T}\right) }
\end{equation*}%
holds for some absolute constant $\mathbf{C}$, then%
\begin{equation}
\left\Vert g\right\Vert _{p,W}\leq 24\mathbf{c}_{1}\mathbf{C}\left\Vert
f\right\Vert _{p,W}  \label{rrr}
\end{equation}%
with some constant $\mathbf{c}_{1}>0$ depending only on $p$,$W$.
\end{theorem}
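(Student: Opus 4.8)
The plan is a two-way transference: lift $\Vert g\Vert_{p,W}$ to a supremum of the continuous functions $F_{g,G}$ by duality, apply the hypothesis at the level of $C(\mathsf{T})$, and then descend from $\sup_{G}\Vert F_{f,G}\Vert_{C(\mathsf{T})}$ back to $\Vert f\Vert_{p,W}$ through one one-sided estimate. I would treat first the range $1<p<\infty$ (the endpoint $p=1$, with $p'=\infty$, being handled by the analogous argument). For the lifting step, note that every nonnegative $G\in C^{\infty}$ with $\Vert G\Vert_{p',W'}\le 1$ is admissible, and that $\{\,\vert G\vert:G\in C^{\infty},\ \Vert G\Vert_{p',W'}\le 1\,\}$ is, up to a factor $2$, a norming family for the real space $L_{W}^{p}$ paired with its associate space $L_{p',W'}$ in the unweighted pairing $\langle h,k\rangle=\int_{\mathsf{T}}hk$; this uses the weighted H\"older inequality (the case $\varphi(x)=x^{p}$ of Theorem \ref{lbir}) and the density of $C^{\infty}$ in $L_{p',W'}$, together with the continuity of $k\mapsto\int_{\mathsf{T}}gk$ in the $\Vert\cdot\Vert_{p',W'}$-quasinorm. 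Concretely,
\begin{equation*}
\sup_{G}\bigl\vert F_{g,G}(0)\bigr\vert=\sup\Bigl\{\Bigl\vert\int_{\mathsf{T}}g\,\vert G\vert\,dx\Bigr\vert:G\in C^{\infty},\ \Vert G\Vert_{p',W'}\le 1\Bigr\}=\max\bigl\{\Vert g^{+}\Vert_{p,W},\Vert g^{-}\Vert_{p,W}\bigr\}\ge\tfrac12\Vert g\Vert_{p,W},
\end{equation*}
where the last inequality uses $\Vert g\Vert_{p,W}^{p}=\Vert g^{+}\Vert_{p,W}^{p}+\Vert g^{-}\Vert_{p,W}^{p}$; and by Theorem \ref{Fu} each $F_{g,G}$ lies in $C(\mathsf{T})$, so $\bigl\vert F_{g,G}(0)\bigr\vert\le\Vert F_{g,G}\Vert_{C(\mathsf{T})}$.

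Feeding in the hypothesis and taking the supremum over admissible $G$ reduces the whole theorem to the one-sided transference estimate
\begin{equation*}
\Vert F_{f,G}\Vert_{C(\mathsf{T})}=\sup_{u\in\mathsf{T}}\Bigl\vert\int_{\mathsf{T}}f(x+u)\,\vert G(x)\vert\,dx\Bigr\vert\le\mathbf{c}_{1}\Vert f\Vert_{p,W}\qquad\bigl(G\in C^{\infty},\ \Vert G\Vert_{p',W'}\le 1\bigr),
\end{equation*}
with $\mathbf{c}_{1}$ depending only on $p$ and $W$; for then $\Vert g\Vert_{p,W}\le 2\sup_{G}\bigl\vert F_{g,G}(0)\bigr\vert\le 2\sup_{G}\Vert F_{g,G}\Vert_{C(\mathsf{T})}\le 2\mathbf{C}\sup_{G}\Vert F_{f,G}\Vert_{C(\mathsf{T})}\le 2\mathbf{C}\,\mathbf{c}_{1}\Vert f\Vert_{p,W}$, and the crude factor $2$ becomes the stated $24$ once one keeps honest the positive/negative splitting and the covering constants entering $\mathbf{c}_{1}$. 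After the substitution $F_{f,G}(u)=\int_{\mathsf{T}}f(y)\,\vert G(y-u)\vert\,dy$ a weighted H\"older step reduces this estimate to bounding $\int_{\mathsf{T}}\vert f(y)\vert\,\vert G(y-u)\vert\,dy$ uniformly in $u$ by $\Vert f\Vert_{p,W}$, i.e.\ to controlling translates of $G$ against the fixed weight $W'=W_{\ast}$.

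I expect this uniform-in-$u$ bound to be the only real obstacle, and it is precisely where the Muckenhoupt hypothesis $W\in A_{p}$ (equivalently $W_{\ast}\in A_{p'}$, as recalled above) is indispensable: translation fails to be bounded on these weighted spaces, so a naive H\"older step cannot give it. The route I would pursue is, for each fixed $u$, to split $\mathsf{T}$ into a controlled number of dyadic subintervals on which $W$ is essentially constant, to bound the contribution of each subinterval by H\"older there combined with the $A_{p}$ (doubling) comparison of $W$ on an interval with its mean, and then to sum over the intervals; the finitely many overlapping intervals delivered by such a Vitali-type selection are the plausible origin of the absolute constant $24$. The boundedness of the Hardy--Littlewood maximal operator on $L_{W}^{p}$ and $L_{W'}^{p'}$, already invoked for Lemma \ref{l2}, is the natural device for packaging the estimate cleanly, and with it in hand the chain above gives \eqref{rrr}.
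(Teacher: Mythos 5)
There is a genuine gap, and it sits exactly at the step you yourself flag as "the only real obstacle": the one-sided estimate $\Vert F_{f,G}\Vert_{C(\mathsf{T})}\leq\mathbf{c}_{1}\Vert f\Vert_{p,W}$, uniformly over all admissible $G$ with a constant depending only on $p$ and $W$, is false for general $W\in A_{p}$, so no dyadic/doubling/maximal-function packaging can deliver it. Indeed, by the same duality you use in the lifting step, for each fixed $u$ one has $\sup\{\,F_{f,G}(u):G\in C^{\infty},\ \Vert G\Vert_{p^{\prime },W^{\prime }}\leq 1\,\}=\Vert f(\cdot +u)\Vert_{p,W}$ (for $f\geq 0$), so the uniform-in-$u$, uniform-in-$G$ bound you reduce the theorem to is \emph{equivalent} to the uniform boundedness of translations on $L_{W}^{p}$ — precisely the property that fails for Muckenhoupt weights and that motivates this whole circle of ideas. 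Concretely, for $W(x)\approx |x|^{a}$ with $0<a<p-1$ (an $A_{p}$ weight) and $f(x)=|x|^{-b}\chi_{\{|x|<1\}}$ with $1/p\leq b<(a+1)/p$, one has $f\in L_{W}^{p}$ but $f(\cdot +u)\notin L_{W}^{p}$ for suitable $u\neq 0$; choosing smooth normalized $G$ close to the dual extremizers of truncations of $f(\cdot +u)$ makes $F_{f,G}(u)$ arbitrarily large while $\Vert f\Vert_{p,W}$ stays fixed. So no constant $\mathbf{c}_{1}(p,W)$ exists, and the chain $\Vert g\Vert_{p,W}\lesssim\sup_{G}\Vert F_{g,G}\Vert_{C(\mathsf{T})}\lesssim\sup_{G}\Vert F_{f,G}\Vert_{C(\mathsf{T})}\lesssim\Vert f\Vert_{p,W}$ breaks at the last link.

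The paper's route (visible in its proof of the Orlicz analogue, Theorem \ref{traX}, which follows the cited proof of Theorem \ref{tra}) avoids any such uniformity over $G$. One bounds $F_{f,G}(u)\leq\Vert G\Vert_{\infty}\int_{\mathsf{T}}|f(x+u)|\,dx=\Vert G\Vert_{\infty}\Vert f\Vert_{L^{1}}\leq c_{0}\Vert G\Vert_{\infty}\Vert f\Vert_{p,W}$, exploiting that the \emph{unweighted} $L^{1}$ norm is translation invariant and that $L_{W}^{p}\hookrightarrow L^{1}$ for $W\in A_{p}$ (since $W^{1-p^{\prime }}\in L^{1}$; this is the Lebesgue case of Theorem \ref{lbir}). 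The hypothesis is then applied not for all $G$ at once but only for the single near-extremal $G$ produced by the converse-H\"older/duality step for $g$ at $u=0$, so the factor $\Vert G\Vert_{\infty}$ attached to that particular $G$ is what enters, and translated weights never need to be compared. Your lifting step (duality at $u=0$, positive/negative parts, Theorem \ref{Fu} for continuity) agrees with the paper; it is the descent step that must be replaced by this $L^{1}$-based argument rather than a $G$-uniform transference estimate.
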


Using the same steps with the constructions in Defition \ref{dd1}, Theorems %
\ref{Fu}-\ref{tra} of \cite{akgArx} we can obtain the following similar
results for functions in $L_{W}^{\varphi }$ where $\varphi \in Q$, $W\in
A_{p\left( \varphi \right) }$. We denote by $S\left( \mathsf{T}\right) $ the
collection of simple functions on $\mathsf{T}$.

\begin{definition}
\label{dd1x}Suppose that $\varphi \in Q$, $W\in A_{p\left( \varphi \right) }$%
, and $f\in L_{W}^{\varphi }$. For an $G\in L_{\tilde{\varphi},W_{\ast
}}\cap S\left( \mathsf{T}\right) $,\quad $\left\Vert G\right\Vert _{\tilde{%
\varphi},W_{\ast }}\leq 1$ we define%
\begin{equation}
\Xi _{f,G}\left( u\right) :=\int\nolimits_{\mathsf{T}}f\left( x+u\right)
\left\vert G\left( x\right) \right\vert dx,\quad u\in \mathsf{T}\text{.}
\label{dd1xy}
\end{equation}
\end{definition}

\begin{theorem}
\label{FuX}If $\varphi \in Q$, $W\in A_{p\left( \varphi \right) }$, and $%
f\in L_{W}^{\varphi }$, then the function $\Xi _{f,G}\left( u\right) $,
defined in (\ref{dd1xy}), is uniformly continuous on $\mathsf{T}.$
\end{theorem}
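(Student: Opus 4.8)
The plan is to mimic the proof of Theorem \ref{Fu} from \cite{akgArx}, transferring the argument from the Lebesgue setting $L_W^p$ to the Orlicz setting $L_W^{\varphi}$ by replacing the classical H\"older inequality with the variant established in Theorem \ref{lbir}. First I would recall that uniform continuity on the compact set $\mathsf{T}$ is equivalent to continuity, so it suffices to estimate the increment $\Xi_{f,G}(u+\delta)-\Xi_{f,G}(u)$ uniformly in $u$ as $\delta\to 0$. Writing
\begin{equation*}
\left\vert \Xi_{f,G}(u+\delta)-\Xi_{f,G}(u)\right\vert \leq \int_{\mathsf{T}}\left\vert f(x+u+\delta)-f(x+u)\right\vert\left\vert G(x)\right\vert\,dx,
\end{equation*}
I would apply the H\"older-type inequality \eqref{Hld} with the pair $\varphi$, $\tilde\varphi$ and weights $W$, $W_{\ast}$: since $\left\Vert G\right\Vert_{\tilde\varphi,W_{\ast}}\leq 1$, the right-hand side is bounded by $c\,\left\Vert f(\cdot+u+\delta)-f(\cdot+u)\right\Vert_{\varphi,W}$. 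Here one must be slightly careful: \eqref{Hld} requires $G\in L_{W_{\ast}}^{\tilde\varphi}$, which holds by the hypothesis $G\in L_{\tilde\varphi,W_{\ast}}\cap S(\mathsf{T})$, and requires the shifted difference to lie in $L_W^{\varphi}$, which follows once we know translation is controlled in this space.

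The key step is therefore to show that translation is continuous in $L_W^{\varphi}$, i.e. $\left\Vert f(\cdot+t)-f\right\Vert_{\varphi,W}\to 0$ as $t\to 0$, together with a uniform bound independent of the base point. The standard route is a density argument: by \eqref{cvf} of Theorem \ref{lbir}, $C(\mathsf{T})\hookrightarrow L_W^{\varphi}$ continuously, and one shows $C(\mathsf{T})$ (or $C^{\infty}$, or $S(\mathsf{T})$) is dense in $L_W^{\varphi}$ under $\left\Vert\cdot\right\Vert_{\varphi,W}$; this is where the $\Delta_2$ condition on $\varphi$ enters, since $\Delta_2$ guarantees that the Orlicz norm topology coincides with convergence in modular and that bounded functions are dense. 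For $g\in C(\mathsf{T})$, uniform continuity gives $\left\Vert g(\cdot+t)-g\right\Vert_{\infty}\to 0$, hence $\left\Vert g(\cdot+t)-g\right\Vert_{\varphi,W}\lesssim \left\Vert g(\cdot+t)-g\right\Vert_{\infty}\left\Vert 1\right\Vert_{\varphi,W}\to 0$ using the embedding; the approximation $\left\Vert f-g\right\Vert_{\varphi,W}<\varepsilon$ is transported by translation because the weighted Orlicz norm changes by at most a factor controlled via the $A_{p(\varphi)}$ condition on $W$ — this last point (translation quasi-invariance of the norm) is the part that genuinely uses the Muckenhoupt hypothesis and is, I expect, the main technical obstacle.

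Assembling these pieces: given $\varepsilon>0$, choose $g\in C(\mathsf{T})$ with $\left\Vert f-g\right\Vert_{\varphi,W}<\varepsilon$, then choose $\delta_0$ so small that $\left\Vert g(\cdot+s)-g\right\Vert_{\varphi,W}<\varepsilon$ for $|s|<\delta_0$; for such $s$ and any $u$, split
\begin{equation*}
\left\Vert f(\cdot+u+s)-f(\cdot+u)\right\Vert_{\varphi,W}\leq \left\Vert (f-g)(\cdot+u+s)\right\Vert_{\varphi,W}+\left\Vert g(\cdot+u+s)-g(\cdot+u)\right\Vert_{\varphi,W}+\left\Vert (g-f)(\cdot+u)\right\Vert_{\varphi,W},
\end{equation*}
where the outer two terms are $\lesssim \varepsilon$ by translation quasi-invariance and the middle term is $<\varepsilon$ by the choice of $\delta_0$ (translation invariance of the sup defining uniform continuity of $g$). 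Feeding this back into the H\"older estimate yields $\left\vert \Xi_{f,G}(u+s)-\Xi_{f,G}(u)\right\vert\lesssim\varepsilon$ uniformly in $u$ and in all admissible $G$, which is precisely uniform continuity of $\Xi_{f,G}$ on $\mathsf{T}$. The remaining routine checks are that $\Xi_{f,G}$ is well-defined (finiteness of the integral, again by \eqref{Hld}) and $2\pi$-periodic, both immediate.
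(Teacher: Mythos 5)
There is a genuine gap, and it sits exactly where you suspected: the ``translation quasi-invariance'' of $\left\Vert\cdot\right\Vert_{\varphi,W}$ that your decomposition needs is false for Muckenhoupt weights. The terms $\left\Vert (f-g)(\cdot+u+s)\right\Vert_{\varphi,W}$ and $\left\Vert (g-f)(\cdot+u)\right\Vert_{\varphi,W}$ cannot be bounded by a constant times $\left\Vert f-g\right\Vert_{\varphi,W}$ uniformly in the shift: a weight $W\in A_{p(\varphi)}$ may vanish (or blow up) like a power at a point, and a function whose mass sits near that point can have small weighted norm while its translate, moved away from the zero of the weight, has arbitrarily large weighted norm. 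This is precisely the phenomenon the paper points out in the introduction (``Usual translation operator $f(\cdot)\rightarrow f(\cdot+h)$ is, in general, not bounded for weighted spaces of functions''), and it is the reason Steklov means $\mathcal{A}_{h}$ are used instead of translates throughout. The $A_{p(\varphi)}$ condition gives boundedness of the maximal operator, hence of $\mathcal{A}_{h}$ (Lemma \ref{l2}), but it does not give any control of the translation operator, so the density argument in your middle step cannot be repaired in that form; consequently the H\"older estimate you feed it into never gets a vanishing right-hand side.

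The way to avoid this (and, in substance, what the omitted proof from \cite{akgArx} does, since the paper simply refers to Theorem 1 there) is to never translate $f$ in the weighted norm at all: in $\Xi_{f,G}(u+\delta)-\Xi_{f,G}(u)$ change variables in the first integral so that the increment falls on $G$, giving
\begin{equation*}
\left\vert \Xi_{f,G}(u+\delta)-\Xi_{f,G}(u)\right\vert \leq \int_{\mathsf{T}}\left\vert f(y+u)\right\vert\,\bigl\vert \left\vert G(y-\delta)\right\vert-\left\vert G(y)\right\vert\bigr\vert\,dy .
\end{equation*}
Now use the embedding chain \eqref{cvf} of Theorem \ref{lbir}: $f\in L_{W}^{\varphi}\hookrightarrow L^{v}$ for some $v>1$, and the \emph{unweighted} $L^{v}$ norm is translation invariant, so by the classical (unweighted) H\"older inequality the right-hand side is at most $\left\Vert f\right\Vert_{v}\left\Vert G(\cdot-\delta)-G\right\Vert_{v'}$, a bound independent of $u$. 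Since $G\in S(\mathsf{T})$ is bounded, translation is continuous on it in $L^{v'}$, so the bound tends to $0$ as $\delta\to 0$, which gives uniform continuity of $\Xi_{f,G}$ for the fixed admissible $G$. Your opening reduction and the well-definedness/periodicity remarks are fine; it is only the transfer of the small shift onto $f$ in the weighted norm that has to be replaced by this change-of-variables trick.
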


\begin{theorem}
\label{traX}Let $\varphi \in Q$, $W\in A_{p\left( \varphi \right) }$, and $%
f,g\in L_{W}^{\varphi }$. In this case, if%
\begin{equation*}
\left\Vert \Xi _{g,G}\right\Vert _{C\left( \mathsf{T}\right) }\leq
C\left\Vert \Xi _{f,G}\right\Vert _{C\left( \mathsf{T}\right) }
\end{equation*}%
holds for some absolute constant $C$, then%
\begin{equation}
\left\Vert g\right\Vert _{\varphi ,W}\leq \mathbf{c}_{2}C\left\Vert
f\right\Vert _{\varphi ,W}  \label{vbn}
\end{equation}%
with some constant $\mathbf{c}_{2}>0$ depending only on $\varphi $,$W$.
\end{theorem}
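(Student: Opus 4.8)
The plan is to mimic the proof of Theorem \ref{tra} from \cite{akgArx}, transferring the argument from the weighted Lebesgue setting to the weighted Orlicz setting by systematically replacing the pairing between $L_W^p$ and $L_{W'}^{p'}$ with the H\"older-type duality between $L_W^{\varphi}$ and $L_{W_{\ast}}^{\tilde{\varphi}}$ furnished by Theorem \ref{lbir}. The starting point is the norm representation: since $\|g\|_{\varphi,W}$ is the Orlicz norm, it is (up to the universal constants in (\ref{fac})) computed by testing against functions $G$ with $\int_{\mathsf{T}}\tilde{\varphi}(|G|)W_{\ast}\,dx\le 1$, and by density of $S(\mathsf{T})$ in $L_{W_{\ast}}^{\tilde{\varphi}}$ together with the embeddings (\ref{cvf}) it suffices to take $G\in L_{\tilde{\varphi},W_{\ast}}\cap S(\mathsf{T})$ with $\|G\|_{\tilde{\varphi},W_{\ast}}\le 1$. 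For such a $G$ one has
\begin{equation*}
\int_{\mathsf{T}}|g(x)G(x)|\,dx \;=\; \Xi_{g,G}(0)\;\le\;\|\Xi_{g,G}\|_{C(\mathsf{T})}.
\end{equation*}

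Next I would invoke the hypothesis $\|\Xi_{g,G}\|_{C(\mathsf{T})}\le C\|\Xi_{f,G}\|_{C(\mathsf{T})}$ to pass to $f$, so the whole problem reduces to bounding $\|\Xi_{f,G}\|_{C(\mathsf{T})}$ by $\|f\|_{\varphi,W}$ uniformly over admissible $G$. Here $\Xi_{f,G}(u)=\int_{\mathsf{T}}f(x+u)|G(x)|\,dx$, and by a change of variables $\Xi_{f,G}(u)=\int_{\mathsf{T}}f(x)|G(x-u)|\,dx$; applying the H\"older inequality (\ref{Hld}) of Theorem \ref{lbir} to the pair $f\in L_W^{\varphi}$ and $x\mapsto |G(x-u)|\in L_{W_{\ast}}^{\tilde{\varphi}}$ gives
\begin{equation*}
|\Xi_{f,G}(u)|\;\le\; c\,\|f\|_{\varphi,W}\,\bigl\|\,|G(\cdot-u)|\,\bigr\|_{\tilde{\varphi},W_{\ast}}.
\end{equation*}
The delicate point — and the main obstacle — is that $W_{\ast}$ is not translation invariant, so $\|G(\cdot-u)\|_{\tilde{\varphi},W_{\ast}}$ is \emph{not} equal to $\|G\|_{\tilde{\varphi},W_{\ast}}\le 1$. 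This is exactly the technical issue handled in \cite{akgArx} via the $A_p$ (here $A_{(p(\varphi))'}$, since $W_{\ast}\in A_{(p(\varphi))'}$) property of the weight: one controls the $\varphi$-modular (or Orlicz norm) of a translate of $G$ by a fixed multiple of that of $G$ itself, with the multiple depending only on the $A_{(p(\varphi))'}$-characteristic of $W_{\ast}$, which in turn is controlled by $[W]_{p(\varphi)}$. Concretely, one shows $\sup_{u\in\mathsf{T}}\|G(\cdot-u)\|_{\tilde{\varphi},W_{\ast}}\le \mathbf{c}_2\,\|G\|_{\tilde{\varphi},W_{\ast}}$ with $\mathbf{c}_2$ depending only on $\varphi$ and $W$; this requires the $\Delta_2$-condition on $\tilde{\varphi}$ (equivalently $\varphi^{\theta}$ quasiconvex for some $\theta\in(0,1)$, which forces $p(\varphi)<\infty$ and a suitable growth condition on $\tilde{\varphi}$) so that modular and norm estimates are comparable, plus a doubling/covering argument on $\mathsf{T}$ using that $W_{\ast}$ is an $A_{\infty}$ weight.

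Assembling the pieces: combining the three displayed inequalities yields $\int_{\mathsf{T}}|g(x)G(x)|\,dx\le c\,C\,\mathbf{c}_2\,\|f\|_{\varphi,W}$ for every admissible $G$, hence by the definition of the Orlicz norm (and (\ref{fac}) to absorb the discrepancy between the norm and the Kolmogorov--Minkowski functional, and the passage from $\|G\|_{\tilde\varphi,W_\ast}\le 1$ in the modular sense versus the norm sense) $\|g\|_{\varphi,W}\le \mathbf{c}_2 C\|f\|_{\varphi,W}$ after renaming constants, with $\mathbf{c}_2>0$ depending only on $\varphi$ and $W$. The uniform continuity needed to make $\|\Xi_{f,G}\|_{C(\mathsf{T})}$ and $\|\Xi_{g,G}\|_{C(\mathsf{T})}$ well-defined suprema (rather than essential suprema) is precisely Theorem \ref{FuX}, so that result is used at the outset to ensure all the quantities in play are genuine maxima of continuous functions. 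The one place to be careful about order is to fix $G$ first, run the estimate $\Xi_{g,G}(0)\le \|\Xi_{g,G}\|_{C}\le C\|\Xi_{f,G}\|_{C}\le cC\mathbf{c}_2\|f\|_{\varphi,W}$, and only then take the supremum over $G$; since the final bound is uniform in $G$, this is legitimate.
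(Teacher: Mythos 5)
Your overall skeleton (represent $\|g\|_{\varphi,W}$ by duality over simple $G$ with $\|G\|_{\tilde\varphi,W_\ast}\le 1$, evaluate $\Xi_{g,G}$ at $0$, invoke the hypothesis, and reduce everything to a bound on $\|\Xi_{f,G}\|_{C(\mathsf{T})}$) is exactly the paper's, but the central step is carried out on a false premise. You bound $|\Xi_{f,G}(u)|\le c\,\|f\|_{\varphi,W}\,\|G(\cdot-u)\|_{\tilde\varphi,W_\ast}$ via (\ref{Hld}) and then claim $\sup_{u\in\mathsf{T}}\|G(\cdot-u)\|_{\tilde\varphi,W_\ast}\le \mathbf{c}_2\,\|G\|_{\tilde\varphi,W_\ast}$ with $\mathbf{c}_2$ depending only on $\varphi$ and $W$. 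No such uniform translation bound holds for Muckenhoupt weights: translation operators are in general \emph{not} bounded on weighted Lebesgue or Orlicz spaces with $A_p$ weights, and this very failure is the motivation, stated in the paper's introduction, for working with the Steklov mean $\mathcal{A}_h$ instead of $f(\cdot)\mapsto f(\cdot+h)$. A power weight such as $W_\ast$ behaving like $|x|^{\alpha}$ near a point already defeats the claim: a bump of width $\varepsilon$ placed at the zero of the weight has small weighted norm, while its translate away from that point has norm larger by an unbounded factor as $\varepsilon\to 0$; no doubling or $A_\infty$ covering argument can restore a constant depending only on the weight. So the step on which your estimate of $\|\Xi_{f,G}\|_{C(\mathsf{T})}$ rests is not a technical point "handled in \cite{akgArx}" — it is precisely what that construction is designed to avoid.

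The paper's proof sidesteps translated weighted norms altogether. Since $G\in S(\mathsf{T})$ is bounded, it estimates $\Xi_{f,G}(u)\le \|G\|_{\infty}\,\|f(\cdot+u)\|_{L^{1}}=\|G\|_{\infty}\,\|f\|_{L^{1}}\le c_{0}\,\|G\|_{\infty}\,\|f\|_{\varphi,W}$, using the translation invariance of the \emph{unweighted} $L^{1}$ norm together with the embedding $L_{W}^{\varphi}\hookrightarrow L^{1}$ established in Theorem \ref{lbir} (inequality (\ref{h1}), part of (\ref{cvf})); the duality step, the near-optimal choice of $G$ via the norm conjugate formula, and the limit $\varepsilon\to 0$ then proceed as you describe, with Theorem \ref{FuX} guaranteeing that the $C(\mathsf{T})$-suprema are genuine maxima. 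If you want to salvage your weighted-H\"older route, the correct repair for bounded $G$ is the trivial bound $\|G(\cdot-u)\|_{\tilde\varphi,W_\ast}\le \|G\|_{\infty}\,\|1\|_{\tilde\varphi,W_\ast}$, which is in substance the same use of $\|G\|_{\infty}$ that the paper makes (note that the paper's constant likewise carries the factor $C_1=\|G\|_{\infty}$ of the chosen simple function); as written, however, your argument contains a genuine gap at the translation step.
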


In some cases we will use another well known transference result: the
Marcinkiewicz Interpolation Theorem. The following Marcinkiewicz type
interpolation theorem that was proved in more general form in \cite[Theorem
3.6]{gk95}, for an subadditive operator $T$, mapping the measure space $%
\left( Y_{0},S_{0},\upsilon _{0}\right) $ into measure space $\left(
Y_{1},S_{1},\upsilon _{1}\right) $.

\begin{theorem}
\label{l1}(\cite[Theorem 3.6]{gk95})Let $\varphi $ be a quasiconvex Young
function and $\tilde{\varphi}$ be its complementary (\ref{comp}). Suppose
that $1\leq r<p\left( \varphi \right) \leq \left( p\left( \tilde{\varphi}%
\right) \right) ^{\prime }<s<\infty $. If there exist constants $c$ and $C>0$
such that for all $f\in L^{r}+L^{s}$%
\begin{equation*}
\int\limits_{\left\{ y\in Y_{1}:\left\vert Tf\left( x\right) \right\vert
>\lambda \right\} }d\nu _{1}\leq c\lambda ^{-r}\int\limits_{Y_{0}}\left\vert
f\left( x\right) \right\vert ^{r}d\nu _{0}
\end{equation*}%
and%
\begin{equation*}
\int\limits_{\left\{ y\in Y_{1}:\left\vert Tf\left( x\right) \right\vert
>\lambda \right\} }d\nu _{1}\leq C\lambda ^{-s}\int\limits_{Y_{0}}\left\vert
f\left( x\right) \right\vert ^{s}d\nu _{0}\text{,}
\end{equation*}%
then%
\begin{equation*}
\int\limits_{Y_{1}}\varphi \left( Tf\left( x\right) \right) d\nu
_{1}\lesssim \int\limits_{Y_{0}}\varphi \left( f\left( x\right) \right) d\nu
_{0}.
\end{equation*}
\end{theorem}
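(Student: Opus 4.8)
The plan is to run the classical Marcinkiewicz interpolation scheme with the Orlicz modular $\int_{Y_1}\varphi(|Tf|)\,d\nu_1$ playing the role of an $L^{p}$-norm, reducing the whole estimate to the two given weak-type inequalities through a layer-cake representation. Since $\varphi$ is continuous and increasing with $\varphi(0)=0$, the Lebesgue--Stieltjes measure $d\varphi$ is atomless and $\varphi(u)=\int_{(0,u]}d\varphi(\lambda)$, so by Tonelli
\[
\int_{Y_1}\varphi\bigl(|Tf(x)|\bigr)\,d\nu_1(x)=\int_0^{\infty}\nu_1\bigl(\{x:|Tf(x)|>\lambda\}\bigr)\,d\varphi(\lambda).
\]
The conclusion is vacuous unless $\int_{Y_0}\varphi(|f|)\,d\nu_0<\infty$, and since $p(\varphi)$ and $(p(\tilde{\varphi}))'$ lie strictly between $r$ and $s$ this already forces $f\in L^{r}+L^{s}$, so the weak bounds will be applicable to the pieces below.

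Next, for each fixed $\lambda>0$ I would truncate $f=f_1^{\lambda}+f_2^{\lambda}$ with $f_1^{\lambda}:=f\chi_{\{|f|>\lambda\}}$ and $f_2^{\lambda}:=f\chi_{\{|f|\le\lambda\}}$ (which lie in $L^{r}$ and $L^{s}$ respectively). By subadditivity of $T$ one has $\{|Tf|>\lambda\}\subseteq\{|Tf_1^{\lambda}|>\lambda/2\}\cup\{|Tf_2^{\lambda}|>\lambda/2\}$, and the weak $(r,r)$ and weak $(s,s)$ hypotheses give
\[
\nu_1\bigl(\{|Tf_1^{\lambda}|>\lambda/2\}\bigr)\le c\,2^{r}\lambda^{-r}\int_{\{|f|>\lambda\}}|f|^{r}\,d\nu_0,\qquad
\nu_1\bigl(\{|Tf_2^{\lambda}|>\lambda/2\}\bigr)\le C\,2^{s}\lambda^{-s}\int_{\{|f|\le\lambda\}}|f|^{s}\,d\nu_0.
\]
Substituting into the layer-cake integral and interchanging the order of integration (Tonelli again) reduces the entire theorem to the two pointwise one-dimensional estimates
\[
\int_0^{t}\lambda^{-r}\,d\varphi(\lambda)\lesssim t^{-r}\varphi(t),\qquad \int_{t}^{\infty}\lambda^{-s}\,d\varphi(\lambda)\lesssim t^{-s}\varphi(t)\qquad(t>0),
\]
since applying them with $t=|f(x)|$ converts each of the two resulting integrals into a fixed multiple of $\int_{Y_0}\varphi(|f|)\,d\nu_0$, and adding them concludes the argument.

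The technical heart is this pair of index estimates. For the first, $r<p(\varphi)$ lets me choose $q$ with $1/p(\varphi)<q<1/r$, so that $\varphi^{q}$ is quasiconvex; comparing $\varphi^{q}$ with a convex Young majorant shows $\varphi(\lambda)^{q}/\lambda$ is essentially nondecreasing, hence $\varphi(\lambda)\lesssim(\lambda/t)^{1/q}\varphi(t)$ for $0<\lambda\le t$. Because $1/q>r$, this makes $\lambda^{-r}\varphi(\lambda)\to0$ as $\lambda\to0^{+}$, and integrating by parts gives $\int_0^{t}\lambda^{-r}\,d\varphi(\lambda)=t^{-r}\varphi(t)+r\int_0^{t}\lambda^{-r-1}\varphi(\lambda)\,d\lambda\lesssim t^{-r}\varphi(t)$. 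For the second estimate I would pass to the complementary function: $s>(p(\tilde{\varphi}))'$ is equivalent to $s'<p(\tilde{\varphi})$, so I can pick $s_{0}\in\bigl((p(\tilde{\varphi}))',s\bigr)$ with $s_{0}'<p(\tilde{\varphi})$; then $\tilde{\varphi}(\lambda)/\lambda^{s_{0}'}$ is essentially nondecreasing, which by the standard duality between $\varphi$ and $\tilde{\varphi}$ is equivalent to $\varphi(\lambda)/\lambda^{s_{0}}$ being essentially nonincreasing, i.e.\ $\varphi(\lambda)\lesssim(\lambda/t)^{s_{0}}\varphi(t)$ for $\lambda\ge t$. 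Since $s_{0}<s$, the same integration-by-parts computation yields $\int_{t}^{\infty}\lambda^{-s}\,d\varphi(\lambda)\lesssim t^{-s}\varphi(t)$.

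The main obstacle I anticipate is exactly these index estimates, especially the second one: its proof routes through the duality between power-type decay of a quasiconvex function and power-type growth of its complementary function, a known property of the indices $p(\varphi)$, $p(\tilde{\varphi})$ available from the monographs \cite{GGKK,KK91,gk94}, and it is here that the strict inequalities $r<p(\varphi)$ and $(p(\tilde{\varphi}))'<s$ are genuinely used (with equality the relevant integral diverges). A secondary technical point is that comparing $\varphi$ or $\varphi^{q}$ with its convex minorant/majorant introduces a fixed dilation $\varphi(C\,\cdot)$ on the right-hand side; this disappears when $\varphi$ is convex (or satisfies $\Delta_{2}$) and is in any case harmless for the intended applications to norm boundedness on $L_{W}^{\varphi}$, so in full generality the stated inequality is obtained up to this replacement. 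The remaining ingredients — the layer-cake identity, the memberships $f_1^{\lambda}\in L^{r}$ and $f_2^{\lambda}\in L^{s}$, and the two applications of Tonelli — are routine.
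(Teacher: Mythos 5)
The paper itself offers no proof of this statement: Theorem \ref{l1} is imported verbatim from \cite[Theorem 3.6]{gk95}, so there is no internal argument to compare yours against; I can only judge your proof on its own terms. It is the classical Marcinkiewicz--Zygmund scheme (layer-cake representation with the Stieltjes measure $d\varphi$, truncation of $f$ at height $\lambda$, subadditivity of $T$ plus the two weak-type bounds, Tonelli, and the two index estimates $\int_{0}^{t}\lambda^{-r}\,d\varphi(\lambda)\lesssim t^{-r}\varphi(Ct)$ and $\int_{t}^{\infty}\lambda^{-s}\,d\varphi(\lambda)\lesssim t^{-s}\varphi(Ct)$), and it is sound: $r<p(\varphi)$ yields $q$ with $1/p(\varphi)<q<1/r$ and the quasi-monotonicity of $\varphi^{q}(t)/t$ (\cite[Lemma 1.1.1]{KK91}, the same fact the paper uses in the proof of Theorem \ref{lbir}), while $(p(\tilde{\varphi}))'<s$ yields, via the duality of indices for complementary functions, that $\varphi(t)/t^{s_{0}}$ is quasi-nonincreasing for some $s_{0}<s$; the integration by parts you indicate then closes both estimates.

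Two remarks. First, the dilation constant you flag is harmless even for the statement exactly as boxed: the hypothesis $(p(\tilde{\varphi}))'<s<\infty$ forces $p(\tilde{\varphi})>1$, i.e.\ $\tilde{\varphi}^{\beta}$ is quasiconvex for some $\beta\in(0,1)$, which for quasiconvex $\varphi$ is equivalent to $\varphi\in\Delta_{2}$ (this is precisely the equivalence from \cite[Lemma 6.1.6]{GGKK} invoked in the paper's proof of Theorem \ref{mmt}); hence $\varphi(C\,\cdot)\lesssim\varphi(\cdot)$ and the inner constant is absorbed, so no weakening of the conclusion occurs. Second, to legitimize applying the weak-type hypotheses to the truncated pieces it is enough to note that $f\chi_{E}\in L^{r}+L^{s}$ whenever $f\in L^{r}+L^{s}$ (the natural domain of $T$); your parenthetical claim that $f\chi_{\{|f|>\lambda\}}\in L^{r}$ and $f\chi_{\{|f|\le\lambda\}}\in L^{s}$ individually is neither needed nor automatic (it can fail on an infinite measure space or when $\varphi$ vanishes near the origin), and the detour through finiteness of the modular forcing $f\in L^{r}+L^{s}$ is likewise unnecessary, since the weak-type inequalities remain valid (if vacuous) when their right-hand sides are infinite. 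With these small adjustments your argument is complete and is, in spirit, the standard proof of the cited result.
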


Third transference result is celebrated Extrapolation Theorem. In this work
we proved an Extrapolation Theorem \ref{ET} designed for Orlicz spaces
generated by quasiconvex Young functions.

\section{Modulus of smoothness}

First we start with an extention of Lemma \ref{l2}. To do this we will
employ transference Theorem \ref{traX}.

\begin{lemma}
\label{l2x}Let $i\in \mathbb{N}$, $\varphi \in Q$, $W\in A_{p\left( \varphi
\right) }$, and $f\in L_{W}^{\varphi }$. Then, there is a positive constant $%
c_{\varphi ,W}$, depending only on $\varphi $,$W$ \textbf{but independent of 
}$i$, such that%
\begin{equation}
\left\Vert \left( \mathcal{A}_{h}\right) ^{j}f\right\Vert _{\varphi ,W}\leq
c_{\varphi ,W}\left\Vert f\right\Vert _{\varphi ,W}  \label{ahashI}
\end{equation}%
holds for any $h>0$.
\end{lemma}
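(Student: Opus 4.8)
The plan is to prove the iterated bound \eqref{ahashI} by reducing it, via the transference Theorem \ref{traX}, to an analogous pointwise statement about the auxiliary functions $\Xi_{f,G}$, and then exploiting that on the classical (unweighted) level iterated Steklov means are controlled by a single Steklov mean with a uniform constant. More precisely, fix $\varphi\in Q$, $W\in A_{p(\varphi)}$, $f\in L_W^\varphi$, and $j\in\mathbb N$ (the index called $i$ in the statement). For $G\in L_{\tilde\varphi,W_\ast}\cap S(\mathsf T)$ with $\|G\|_{\tilde\varphi,W_\ast}\le 1$, set $g:=(\mathcal A_h)^j f$; I would like to compare $\Xi_{g,G}$ with $\Xi_{f,G}$ in $C(\mathsf T)$.

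First I would observe that the Steklov operator $\mathcal A_h$ commutes with translation, so that $\Xi_{\mathcal A_h u,G}(\cdot)=\mathcal A_h\bigl(\Xi_{u,G}\bigr)(\cdot)$ for any $u\in L_W^\varphi$; this is just Fubini applied to the definition \eqref{dd1xy}, legitimate because $f(x+\cdot)\,|G(x)|$ is integrable on $\mathsf T\times\mathsf T$ by the Hölder-type inequality \eqref{Hld} together with $\|f\|_{\varphi,W}<\infty$ and $\|G\|_{\tilde\varphi,W_\ast}\le1$. Iterating, $\Xi_{g,G}=(\mathcal A_h)^j\bigl(\Xi_{f,G}\bigr)$. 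Now $\Xi_{f,G}$ is uniformly continuous on $\mathsf T$ by Theorem \ref{FuX}, hence bounded; and for a bounded function $\psi$ on $\mathsf T$ the elementary estimate $\|\mathcal A_h\psi\|_{C(\mathsf T)}\le\|\psi\|_{C(\mathsf T)}$ holds (the Steklov mean is an average), so by induction $\|(\mathcal A_h)^j\psi\|_{C(\mathsf T)}\le\|\psi\|_{C(\mathsf T)}$, with constant $1$ independent of $j$ and $h$. Therefore
\begin{equation*}
\left\Vert \Xi_{g,G}\right\Vert_{C(\mathsf T)}\le\left\Vert \Xi_{f,G}\right\Vert_{C(\mathsf T)},
\end{equation*}
which is exactly the hypothesis of Theorem \ref{traX} with absolute constant $C=1$. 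Applying that theorem yields $\|g\|_{\varphi,W}\le \mathbf c_2\|f\|_{\varphi,W}$, i.e. $\|(\mathcal A_h)^j f\|_{\varphi,W}\le c_{\varphi,W}\|f\|_{\varphi,W}$ with $c_{\varphi,W}:=\mathbf c_2$ depending only on $\varphi,W$ and crucially not on $j$ or $h$. This is \eqref{ahashI}.

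The step I expect to be the main (minor) obstacle is the rigorous justification of the commutation identity $\Xi_{\mathcal A_h u,G}=\mathcal A_h\Xi_{u,G}$ and, more delicately, that $g=(\mathcal A_h)^j f$ genuinely lies in $L_W^\varphi$ so that $\Xi_{g,G}$ is well defined and Theorem \ref{traX} is applicable to the pair $(f,g)$: one should first know $\mathcal A_h f\in L_W^\varphi$ (this is Lemma \ref{l2}), then iterate, noting at each stage the output is again in $L_W^\varphi$. One must also check that the dense class $L_{\tilde\varphi,W_\ast}\cap S(\mathsf T)$ suffices to compute the $C(\mathsf T)$-norms appearing in Theorem \ref{traX}, but this is built into Definition \ref{dd1x} and the formulation of Theorem \ref{traX}. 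An alternative, more self-contained route avoids transference entirely: apply Lemma \ref{l2} $j$ times to get $\|(\mathcal A_h)^j f\|_{\varphi,W}\le c_{\varphi,W}^{\,j}\|f\|_{\varphi,W}$ — but this constant degenerates in $j$, so it does not give the uniform bound that \eqref{ahashI} demands; hence the transference argument, which transfers the $j$-uniformity of the trivial sup-norm estimate $\|(\mathcal A_h)^j\psi\|_{C}\le\|\psi\|_{C}$, is the natural proof and the reason Theorem \ref{traX} was set up.
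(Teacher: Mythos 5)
Your proposal is correct and follows essentially the same route as the paper: commute $\Xi_{\cdot,G}$ with $\mathcal{A}_h$ to get $\Xi_{(\mathcal{A}_h)^j f,G}=(\mathcal{A}_h)^j\Xi_{f,G}$, use the $j$-uniform sup-norm contraction of the Steklov mean, and transfer back to $L_W^{\varphi}$ via the $\Xi$-machinery of Theorem \ref{traX}. The only cosmetic difference is that you invoke Theorem \ref{traX} as a black box with $C=1$, whereas the paper inlines the same estimates (norm-conjugate formula and the bound $\left\Vert \Xi_{f,G}\right\Vert_{C(\mathsf{T})}\leq C_{1}c_{0}\left\Vert f\right\Vert_{\varphi,W}$) directly.
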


Note that in Lemma \ref{l2x} constant in (\ref{ahashI}) does not depend on
parameter $j$. This fact is important to construct binomial expansion of $I-%
\mathcal{A}_{h}$ with exponent $k\in \mathbb{R}^{+}$.

Now we can define on $\mathsf{T}$, as usual

\begin{equation}
\left( I-\mathcal{A}_{h}\right) ^{k}f\left( \cdot \right)
=\sum\limits_{j=0}^{\infty }\frac{\left( -1\right) ^{j}\Gamma \left(
k+1\right) }{\Gamma \left( j+1\right) \Gamma (k-j+1)}\left( \mathcal{A}%
_{h}\right) ^{j}f\left( \cdot \right)  \label{bina}
\end{equation}%
where\ $k\in \mathbb{R}^{+}$, $\varphi \in Q$, $W\in A_{p\left( \varphi
\right) }$, $f\in L_{W}^{\varphi }$, $\Gamma $ is Gamma function, $I$ is the
identity operator.

\begin{lemma}
\label{l2xx}Suppose that $\varphi \in Q$, $W\in A_{p\left( \varphi \right) }$%
, $f\in L_{W}^{\varphi }$ and $k\in \mathbb{R}^{+}$. Then, the series (\ref%
{bina}) is convergent in $\left\Vert \cdot \right\Vert _{\varphi ,W}$ norm
and%
\begin{equation}
\left\Vert \left( I-\mathcal{A}_{h}\right) ^{k}f\right\Vert _{\varphi
,W}\lesssim \left\Vert f\right\Vert _{\varphi ,W}.  \label{AX}
\end{equation}
\end{lemma}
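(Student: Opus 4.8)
The plan is to show that the coefficient sequence in \eqref{bina} is absolutely summable, and then invoke Lemma \ref{l2x} to control each term uniformly. Write $c_{j}^{(k)}:=\dfrac{(-1)^{j}\Gamma(k+1)}{\Gamma(j+1)\Gamma(k-j+1)}$ for the generalized binomial coefficient $\binom{k}{j}$. The first step is the classical estimate $|c_{j}^{(k)}|\lesssim_{k} j^{-k-1}$ as $j\to\infty$; this follows from Stirling's formula applied to $\Gamma(j+1)$ and to $1/\Gamma(k-j+1)$ (using the reflection formula to rewrite $1/\Gamma(k-j+1)$ in terms of $\Gamma(j-k)$ and a bounded trigonometric factor). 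Consequently $\sum_{j=0}^{\infty}|c_{j}^{(k)}|=:B_{k}<\infty$ for every $k\in\mathbb{R}^{+}$ (and when $k\in\mathbb{N}$ the series is even finite). I would just cite this as a standard fact about fractional differences rather than reproducing Stirling's estimates.

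The second step is to bound each term of the series in the Orlicz norm. By Lemma \ref{l2x}, there is a constant $c_{\varphi,W}$, independent of $j$, with $\|(\mathcal{A}_{h})^{j}f\|_{\varphi,W}\le c_{\varphi,W}\|f\|_{\varphi,W}$ for all $h>0$ and all $j\in\mathbb{N}$; for $j=0$ the operator is the identity and the bound is trivial with constant $1$. Hence
\begin{equation*}
\sum_{j=0}^{\infty}\left\|c_{j}^{(k)}(\mathcal{A}_{h})^{j}f\right\|_{\varphi,W}
\le c_{\varphi,W}\left(\sum_{j=0}^{\infty}\bigl|c_{j}^{(k)}\bigr|\right)\|f\|_{\varphi,W}
= c_{\varphi,W}\,B_{k}\,\|f\|_{\varphi,W}<\infty .
\end{equation*}
Since $L_{W}^{\varphi}$ is a Banach space (Theorem \ref{lbir} guarantees $L_{W}^{\varphi}$ is a genuine normed space, and it is complete), absolute summability of the partial sums forces norm-convergence of the series \eqref{bina}; call its sum $(I-\mathcal{A}_{h})^{k}f$. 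The triangle inequality applied to the convergent series then yields
\begin{equation*}
\left\|(I-\mathcal{A}_{h})^{k}f\right\|_{\varphi,W}\le c_{\varphi,W}\,B_{k}\,\|f\|_{\varphi,W},
\end{equation*}
which is \eqref{AX} with implied constant $c_{\varphi,W}B_{k}$ depending only on $k$, $\varphi$, $W$ (and crucially not on $h$, because the constant in Lemma \ref{l2x} is $h$-free).

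The only real subtlety — and the reason Lemma \ref{l2x} was stated with the emphasis that its constant is independent of the iteration index — is precisely this $j$-uniformity: if the bound on $\|(\mathcal{A}_{h})^{j}f\|_{\varphi,W}$ grew with $j$ (say like $c^{j}$ with $c>1$), the geometric blow-up could defeat the polynomial decay $j^{-k-1}$ of the binomial coefficients and the series might diverge. Since Lemma \ref{l2x} supplies a single constant $c_{\varphi,W}$ valid for every $j$, this obstacle is already removed, and the remainder of the argument is the routine Banach-space summation above. I would also remark, for later use in defining $\Omega_{r}$, that the same computation shows the map $f\mapsto(I-\mathcal{A}_{h})^{k}f$ is a bounded linear operator on $L_{W}^{\varphi}$ with operator norm bounded uniformly in $h>0$.
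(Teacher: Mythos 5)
Your proposal is correct and follows essentially the same route as the paper: the decay estimate $\bigl|\binom{k}{j}\bigr|\lesssim_{k} j^{-k-1}$ for the fractional binomial coefficients combined with the $j$-uniform bound of Lemma \ref{l2x} to control the partial sums by $C_{k,\varphi,W}\left\Vert f\right\Vert _{\varphi ,W}$. The only (harmless) difference is in the final limiting step, where you conclude norm convergence from absolute summability and completeness of $L_{W}^{\varphi }$, whereas the paper appeals to the monotone convergence theorem; your formulation is if anything the cleaner way to close the argument.
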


\begin{definition}
Based on the last inequality we can define weighted modulus of smoothness of
fractional order $k\in \mathbb{R}^{+}\cup \left\{ 0\right\} $ as%
\begin{equation}
\Omega _{k}\left( f,\delta \right) _{\varphi ,W}\text{:=}\underset{0<h\leq
\delta }{\sup }\left\Vert \left( I-\mathcal{A}_{h}\right) ^{k}f\right\Vert
_{\varphi ,W}\text{,\quad }k\in \mathbb{R}^{+}\text{ ; }\Omega _{0}\left(
f,\delta \right) _{\varphi ,W}\text{:=}\left\Vert f\right\Vert _{\varphi ,W}
\label{modul}
\end{equation}%
where $\varphi \in Q$, $W\in A_{p\left( \varphi \right) }$ and $f\in
L_{W}^{\varphi }$.
\end{definition}

For a given $\varphi \in Q$, $W\in A_{p\left( \varphi \right) }$ we can
define the corresponding real trigonometric Fourier series%
\begin{equation}
f(x)\sim \sum\limits_{j=0}^{\infty }\left( a_{j}\left( f\right) \cos
jx+b_{j}\left( f\right) \sin jx\right) :=\sum\limits_{j=0}^{\infty
}A_{j}\left( x,f\right)  \label{fo}
\end{equation}%
of $f\in L_{W}^{\varphi }$ where%
\begin{equation*}
a_{j}\left( f\right) :=\frac{1}{\pi }\int\nolimits_{\mathsf{T}}f\left(
x\right) \cos jxdx\text{ }\left( j\in \left\{ 0\right\} \cup \mathbb{N}%
\right) \text{, }b_{j}\left( f\right) =\frac{1}{\pi }\int\nolimits_{\mathsf{T%
}}f\left( x\right) \sin jxdx\text{ }\left( j\in \mathbb{N}\right) .
\end{equation*}%
Let $S_{n}\left( f\right) :=S_{n}\left( x,f\right)
:=\sum\limits_{j=0}^{n}A_{j}\left( x,f\right) $, $(n\in \left\{ 0\right\}
\cup \mathbb{N})$ be the $n$th partial sum of the Fourier series (\ref{fo})
of $f$.

Using Interpolation Theorem \ref{l1} (with $d\nu _{1}=d\nu _{0}=W\left(
x\right) dx$); Theorems 1 and 8 of \cite{hmw72} and \cite[(4.3), (4.4) of p.
111]{z2}, if $\varphi \in Q$, $W\in A_{p\left( \varphi \right) }$, $f\in
L_{W}^{\varphi }$, then the partial sum operator $S_{n}:L_{W}^{\varphi
}\rightarrow L_{W}^{\varphi }$ ($f\longmapsto S_{n}f$ ) and conjugate
operator $B:L_{W}^{\varphi }\rightarrow L_{W}^{\varphi }$ ($f\longmapsto 
\tilde{f}$ ) are norm bounded in $L_{W}^{\varphi }$, namely,%
\begin{equation}
\left\Vert S_{n}\left( f\right) \right\Vert _{\varphi ,W}\lesssim \left\Vert
f\right\Vert _{\varphi ,W}\text{,\quad }\left\Vert Bf\right\Vert _{\varphi
,W}\lesssim \left\Vert f\right\Vert _{\varphi ,W}.  \label{esen}
\end{equation}%
Therefore we get%
\begin{equation*}
\left\Vert f-S_{n}\left( f\right) \right\Vert _{\varphi ,W}\lesssim
E_{n}\left( f\right) _{\varphi ,W}
\end{equation*}%
for $n\in \left\{ 0\right\} \cup \mathbb{N}$ and $\varphi \in Q$, $W\in
A_{p\left( \varphi \right) }$, $f\in L_{W}^{\varphi }.$

Taking $\varphi \in Q$, $W\in A_{p\left( \varphi \right) }$, then by the
Lemma 3 of \cite{K02}, the set of trigonometric polynomials is a dense
subset of $L_{W}^{\varphi }$. Then the approximation problems make sense in $%
L_{W}^{\varphi }$ and $E_{n}\left( f\right) _{\varphi ,W}\searrow 0$ as $%
n\rightarrow \infty $. The corresponding Fourier series (\ref{fo}) of $f\in
L_{W}^{\varphi }$ converges in $L_{W}^{\varphi }$ norm for $\varphi \in Q$
and $W\in A_{p\left( \varphi \right) }.$

\section{jackson type inequalities of approximation}

The following Lemma has a key role for obtaining the direct Theorem \ref%
{teo2}.

\begin{lemma}
\label{y}Suppose that $\varphi \in Q$, $W\in A_{p\left( \varphi \right) }$, $%
f\in L_{W}^{\varphi }$ and $n,m,k\in \mathbb{N}$. Then, for any $h>0$, there
holds%
\begin{equation*}
\left\Vert (I-\mathcal{A}_{h})^{k}f\right\Vert _{\varphi ,W}\lesssim \frac{C%
}{2^{mk}}\Vert f\Vert _{\varphi ,W}+C^{\prime }\left\Vert (I-\mathcal{A}%
_{h})^{k+1}f\right\Vert _{\varphi ,W}
\end{equation*}%
where constant $C\in \mathbb{R}^{+}$ depends only on $k$,$\varphi $,$W$ and
constant $C^{\prime }\in \mathbb{R}^{+}$ depends only on $m$,$\varphi $,$W$.
\end{lemma}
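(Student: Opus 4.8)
The plan is to peel off a high power of $\mathcal{A}_{h}$ from $(I-\mathcal{A}_{h})^{k}$, reduce the resulting term to a purely ``scalar'' smoothing estimate by means of the transference Theorems \ref{FuX}--\ref{traX}, and control the remaining piece directly by Lemma \ref{l2x}.

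Put $N:=2^{m}$. Since $I=\mathcal{A}_{h}+(I-\mathcal{A}_{h})$ and all operators below commute, the factorization $I-\mathcal{A}_{h}^{N}=(I-\mathcal{A}_{h})\sum_{j=0}^{N-1}\mathcal{A}_{h}^{j}$ gives the operator identity
\begin{equation*}
(I-\mathcal{A}_{h})^{k}=\mathcal{A}_{h}^{N}(I-\mathcal{A}_{h})^{k}+\Big(\sum_{j=0}^{N-1}\mathcal{A}_{h}^{j}\Big)(I-\mathcal{A}_{h})^{k+1}.
\end{equation*}
By Lemma \ref{l2x} the norm of $\mathcal{A}_{h}^{j}$ on $L_{W}^{\varphi }$ is at most a constant $c_{\varphi ,W}$ independent of $j$, so the last summand is bounded in $\left\Vert \cdot \right\Vert _{\varphi ,W}$ by $c_{\varphi ,W}\,2^{m}\left\Vert (I-\mathcal{A}_{h})^{k+1}f\right\Vert _{\varphi ,W}$, which already yields the term with the constant $C^{\prime }=C^{\prime }(m,\varphi ,W)$. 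Hence the whole lemma reduces to proving
\begin{equation*}
\left\Vert \mathcal{A}_{h}^{2^{m}}(I-\mathcal{A}_{h})^{k}f\right\Vert _{\varphi ,W}\lesssim 2^{-mk}\left\Vert f\right\Vert _{\varphi ,W},
\end{equation*}
with implied constant depending only on $k,\varphi ,W$.

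For this I would use transference. A Fubini computation gives $\Xi _{\mathcal{A}_{h}f,G}=\mathcal{A}_{h}\Xi _{f,G}$, hence, since $\mathcal{A}_{h}^{2^{m}}(I-\mathcal{A}_{h})^{k}$ is a finite polynomial in $\mathcal{A}_{h}$ (here $k\in \mathbb{N}$), also $\Xi _{\mathcal{A}_{h}^{2^{m}}(I-\mathcal{A}_{h})^{k}f,\,G}=\mathcal{A}_{h}^{2^{m}}(I-\mathcal{A}_{h})^{k}\Xi _{f,G}$. As $\Xi _{f,G}\in C(\mathsf{T})$ by Theorem \ref{FuX}, Theorem \ref{traX} shows it suffices to prove the corresponding bound in the uniform norm, uniformly in $h>0$ and in the admissible $G$:
\begin{equation*}
\left\Vert \mathcal{A}_{h}^{2^{m}}(I-\mathcal{A}_{h})^{k}\psi \right\Vert _{C(\mathsf{T})}\lesssim 2^{-mk}\left\Vert \psi \right\Vert _{C(\mathsf{T})},\qquad \psi \in C(\mathsf{T}).
\end{equation*}
Now $\mathcal{A}_{h}$ is convolution with the $2\pi $-periodization of $\phi _{h}:=h^{-1}\mathbf{1}_{[-h/2,h/2]}$, so the operator above is convolution with the periodization of the real-line kernel $\phi _{h}^{\ast 2^{m}}\ast (\delta _{0}-\phi _{h})^{\ast k}$ ($\delta _{0}$ the Dirac mass, all convolutions on $\mathbb{R}$); its norm on $C(\mathsf{T})$ is the $L^{1}(\mathsf{T})$-norm of that kernel, and since periodization contracts the $L^{1}$-norm it is enough to show $\left\Vert \phi _{h}^{\ast 2^{m}}\ast (\delta _{0}-\phi _{h})^{\ast k}\right\Vert _{L^{1}(\mathbb{R})}\lesssim _{k}2^{-mk}$. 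We may assume $2^{m}\geq 3k$ (else the trivial bound $\left\Vert (\delta _{0}-\phi _{h})^{\ast k}\right\Vert _{TV}\leq 2^{k}$ suffices after enlarging the $k$-dependent constant). Writing $g_{N}:=\phi _{h}^{\ast N}$ and splitting $2^{m}=N_{1}+\cdots +N_{k}$ with each $N_{i}$ comparable to $2^{m}/k$, factor $\phi _{h}^{\ast 2^{m}}\ast (\delta _{0}-\phi _{h})^{\ast k}=\big(g_{N_{1}}\ast (\delta _{0}-\phi _{h})\big)\ast \cdots \ast \big(g_{N_{k}}\ast (\delta _{0}-\phi _{h})\big)$, so by Young's inequality $\left\Vert \phi _{h}^{\ast 2^{m}}\ast (\delta _{0}-\phi _{h})^{\ast k}\right\Vert _{L^{1}}\leq \prod_{i=1}^{k}\left\Vert g_{N_{i}}\ast (\delta _{0}-\phi _{h})\right\Vert _{L^{1}}$. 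For a single factor, since $\delta _{0}-\phi _{h}$ has vanishing zeroth and first moments, a second-order Taylor expansion gives $\left\Vert g_{N}\ast (\delta _{0}-\phi _{h})\right\Vert _{L^{1}(\mathbb{R})}\leq \tfrac{h^{2}}{24}\left\Vert g_{N}''\right\Vert _{L^{1}(\mathbb{R})}$; writing $g_{N}=g_{a}\ast g_{b}$ with $a+b=N$, $a,b$ comparable to $N/2$, Young's inequality gives $\left\Vert g_{N}''\right\Vert _{L^{1}}\leq \left\Vert g_{a}'\right\Vert _{L^{1}}\left\Vert g_{b}'\right\Vert _{L^{1}}$; and since $g_{a}$ is symmetric and unimodal (a convolution of symmetric intervals), $\left\Vert g_{a}'\right\Vert _{L^{1}(\mathbb{R})}=2g_{a}(0)\lesssim (a^{1/2}h)^{-1}$, the last bound via Plancherel (write $g_{a}$ as the self-convolution of $g_{a/2}$ and use $\int_{\mathbb{R}}\left\vert \sin \eta /\eta \right\vert ^{a}\,d\eta \lesssim a^{-1/2}$). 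Combining, $\left\Vert g_{N_{i}}\ast (\delta _{0}-\phi _{h})\right\Vert _{L^{1}}\lesssim N_{i}^{-1}$, whence the product is $\lesssim _{k}\prod_{i}N_{i}^{-1}\lesssim _{k}2^{-mk}$; together with the identity of the first step this finishes the proof.

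The main obstacle is the last chain of estimates: that applying a single averaging step to the already $N$-fold smoothed box kernel gains a full factor $N^{-1}$ — equivalently $\left\Vert g_{N}''\right\Vert _{L^{1}(\mathbb{R})}\lesssim (Nh^{2})^{-1}$ — which really requires using both vanishing moments of $\delta _{0}-\phi _{h}$ together with the $N^{-1/2}$ flattening of $g_{N}$ in $L^{\infty }$, rather than the weaker $N^{-1/2}$ gain that one vanishing moment alone would give. The reduction to $C(\mathsf{T})$ via Theorems \ref{FuX}--\ref{traX} is exactly what makes this scalar kernel estimate usable for the weighted Orlicz norm, since there the translation operator, and hence naive convolution bounds, are unavailable.
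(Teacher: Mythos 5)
Your proof is correct, but the mechanism that produces the factor $2^{-mk}$ is genuinely different from the paper's. The paper also reduces the estimate to the uniform norm and transfers back with Theorems \ref{FuX}--\ref{traX} via $\Xi_{(I-\mathcal{A}_h)^k f,G}=(I-\mathcal{A}_h)^k\Xi_{f,G}$, but there the smallness is obtained purely algebraically: the identity $(I-\mathcal{A}_h)^k=\sum_{j=1}^{k}2^{-j}(I+\mathcal{A}_h)^{j-1}(I-\mathcal{A}_h)^{k+1}+2^{-k}(I-\mathcal{A}_h^{2})^{k}$ is iterated $m$ times, doubling the power of $\mathcal{A}_h$ at each step, and at the end only the trivial bound $\Vert (I-\mathcal{A}_h^{2^m})^{k}\psi\Vert_{C(\mathsf{T})}\le 2^{k}\Vert\psi\Vert_{C(\mathsf{T})}$ is invoked, so no kernel analysis appears and the gain comes from the telescoping weights $2^{-jk}$. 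You instead peel off $\mathcal{A}_h^{2^m}$ in one step through the geometric-sum identity, dispose of the $C'$-term directly in $L_{W}^{\varphi}$ by Lemma \ref{l2x} (rather than in $C(\mathsf{T})$), and then prove the genuinely quantitative smoothing bound $\Vert\mathcal{A}_h^{2^m}(I-\mathcal{A}_h)^{k}\Vert_{C(\mathsf{T})\rightarrow C(\mathsf{T})}\lesssim_{k}2^{-mk}$ by B-spline kernel estimates on the line (two vanishing moments of $\delta_0-\phi_h$ together with $\Vert(\phi_h^{\ast N})''\Vert_{L^{1}(\mathbb{R})}\lesssim (Nh^{2})^{-1}$). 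The paper's route is shorter and entirely elementary; yours is analytically heavier but isolates the sharp smoothing inequality that explains where $2^{-mk}$ really comes from, and both give constants of the required dependence ($C=C(k,\varphi,W)$, $C'=C'(m,\varphi,W)$). One small point to tidy: for the factorization $g_{N}''=g_{a}'\ast g_{b}'$ and the identity $\Vert g_{a}'\Vert_{L^{1}}=2g_{a}(0)$ you need $a,b\ge 2$ (if $N_{i}=3$ one factor is $\phi_h$, whose derivative is only a measure); either read these derivatives in the total-variation sense, where Young's inequality still applies, or raise the trivial-case threshold to $2^{m}\ge 4k$ so that each $N_{i}\ge 4$ --- in both cases the constants remain $k$-dependent only, so the statement is unaffected.
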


The following Jackson type theorem relates the best approximation error $%
E_{n}\left( f\right) _{\varphi ,W}$ with the modulus of smoothness $\Omega
_{k}\left( f,n^{-1}\right) _{\varphi ,W}$.

\begin{theorem}
\label{teo2}If $\varphi \in Q$, $W\in A_{p\left( \varphi \right) }$, $f\in
L_{W}^{\varphi }$ and $k\in \mathbb{R}^{+}$, then%
\begin{equation}
E_{n}\left( f\right) _{\varphi ,W}\lesssim \Omega _{k}\left( f,\frac{1}{n}%
\right) _{\varphi ,W}  \label{JT}
\end{equation}%
holds for $n\in \mathbb{N}$ with some constant depending only on $k,W$ and $%
\varphi .$
\end{theorem}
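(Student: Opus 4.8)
The plan is to derive the Jackson inequality \eqref{JT} from the realization-type machinery already assembled, in the classical pattern: reduce to integer order, use the boundedness of partial sums, and then apply the key iteration Lemma \ref{y}. First I would observe that since $E_n(f)_{\varphi,W}\searrow 0$ and trigonometric polynomials are dense in $L_W^\varphi$, and since by \eqref{esen} we have $\|f-S_n(f)\|_{\varphi,W}\lesssim E_n(f)_{\varphi,W}$, it suffices to estimate $\|f-S_n(f)\|_{\varphi,W}$ from above by $\Omega_k(f,1/n)_{\varphi,W}$. The standard device is to compare $f$ with a near-best polynomial $t_{2^m}^\ast(f)$ of degree $2^m$ where $2^{m-1}<n\le 2^m$, so that controlling $E_n(f)_{\varphi,W}$ reduces to controlling $\|f-t_{2^m}^\ast(f)\|_{\varphi,W}$ plus a term involving the smoothness of the polynomial, i.e. a realization functional $R_{2k}(f,2^{-m},L_W^\varphi)$.

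Next I would treat the case $k\in\mathbb{N}$ directly. Here the operator $(I-\mathcal{A}_h)^k$ is a finite binomial sum (no series issues), and one can write a telescoping/averaging identity expressing $f-S_n(f)$ (or $f$ minus a suitable de la Vallée Poussin–type mean) in terms of $(I-\mathcal{A}_h)^k f$ for $h\approx 1/n$ together with error terms built from the $(\mathcal{A}_h)^j f$. The boundedness of $S_n$, $B$ and the iterated Steklov means from \eqref{esen}, Lemma \ref{l2} and Lemma \ref{l2x} makes every intermediate operator bounded on $L_W^\varphi$, so the hard analytic content is precisely Lemma \ref{y}: iterating it from exponent $k$ up to some exponent $k+N$ yields
\[
\|(I-\mathcal{A}_h)^k f\|_{\varphi,W}\lesssim 2^{-mk}\|f\|_{\varphi,W}+C_{m}\|(I-\mathcal{A}_h)^{k+N}f\|_{\varphi,W},
\]
and combining this with the comparison to a polynomial of degree $2^m$ (on which $(I-\mathcal{A}_h)$ acts with a spectral gain of order $(h\cdot\mathrm{deg})^2$, giving the $2^{-mk}$ type decay) produces $E_n(f)_{\varphi,W}\lesssim\Omega_k(f,1/n)_{\varphi,W}$ for integer $k$.

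For fractional $k\in\mathbb{R}^+$ I would pass from integer to fractional order. Pick an integer $r>k$; from the integer case $E_n(f)_{\varphi,W}\lesssim\Omega_r(f,1/n)_{\varphi,W}$, and it remains to show $\Omega_r(f,1/n)_{\varphi,W}\lesssim\Omega_k(f,1/n)_{\varphi,W}$ when $r>k$. This is where Lemma \ref{l2xx} (convergence of the binomial series and $\|(I-\mathcal{A}_h)^k f\|_{\varphi,W}\lesssim\|f\|_{\varphi,W}$) is essential: writing $(I-\mathcal{A}_h)^r=(I-\mathcal{A}_h)^{r-k}(I-\mathcal{A}_h)^k$ and applying \eqref{AX} of Lemma \ref{l2xx} to the operator $(I-\mathcal{A}_h)^{r-k}$ gives $\|(I-\mathcal{A}_h)^r f\|_{\varphi,W}\lesssim\|(I-\mathcal{A}_h)^k f\|_{\varphi,W}$, hence the monotonicity $\Omega_r\lesssim\Omega_k$, which closes the argument.

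The main obstacle I expect is organizing the iteration of Lemma \ref{y} so that the constant $C_m$ (depending on $m$) does not spoil the estimate: one must choose the number of iterations $N$ and the scale $m$ (equivalently $h\approx 2^{-m}$) in a coordinated way, exploiting that the polynomial-approximation error term carries a factor like $2^{-2mk}$ that beats $C_m$, while keeping the ``remainder'' $\|(I-\mathcal{A}_h)^{k+N}f\|_{\varphi,W}$ controlled by $\Omega_{k+N}(f,1/n)_{\varphi,W}\le\Omega_k(f,1/n)_{\varphi,W}$ via the monotonicity just discussed. Everything else — density, boundedness of $S_n$ and of Steklov means — is routine given the results quoted above.
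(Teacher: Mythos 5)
Your reduction of fractional $k$ to integer order is fine and is essentially what the paper does: the factorization $(I-\mathcal{A}_h)^r=(I-\mathcal{A}_h)^{r-k}(I-\mathcal{A}_h)^k$ together with (\ref{AX}) gives $\Omega_r\lesssim\Omega_k$ for $r>k$, so the case $k\in\mathbb{R}^+\setminus\mathbb{N}$ follows from the integer case. The genuine gap is in the integer case itself, which is the heart of the theorem. Your mechanism — iterate Lemma \ref{y} on $f$ to get $\Vert(I-\mathcal{A}_h)^k f\Vert_{\varphi,W}\lesssim 2^{-mk}\Vert f\Vert_{\varphi,W}+C_m\Vert(I-\mathcal{A}_h)^{k+N}f\Vert_{\varphi,W}$ and then "compare with a polynomial of degree $2^m$" — does not produce the Jackson bound. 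That displayed inequality estimates $\Omega_k$ from above by a higher-order modulus plus $2^{-mk}\Vert f\Vert_{\varphi,W}$, which is the wrong direction for bounding $E_n(f)_{\varphi,W}$ by $\Omega_k$, and the term $\Vert f\Vert_{\varphi,W}$ cannot be absorbed or discarded: it is not dominated by $\Omega_k(f,1/n)_{\varphi,W}$ for a general $f$. The "telescoping/averaging identity expressing $f-S_nf$ in terms of $(I-\mathcal{A}_h)^kf$" is never exhibited, and it cannot come for free from boundedness of $S_n$, $B$ and the Steklov means, because inverting the multiplier $(1-\frac{\sin(jh/2)}{jh/2})^k$ on high frequencies would require a multiplier theorem of Marcinkiewicz type, not mere boundedness of those operators. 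Moreover, routing the argument through a realization functional $R_{2k}(f,2^{-m},L_W^\varphi)$ is circular in this paper: the estimate $R_{2k}\lesssim\Omega_k$ (Theorem \ref{rea}) is itself proved using Theorem \ref{teo2}.

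What the paper actually does, and what is missing from your plan, is an induction on integer $k$ with the base case $k=1$ quoted from the earlier work \cite{ai11}. Assuming $E_n(g)_{\varphi,W}\lesssim\Omega_k(g,1/n)_{\varphi,W}$ for all $g$, one sets $u:=f-S_nf$, notes $S_n(u)=0$, hence by (\ref{esen}) and the inductive hypothesis $\Vert u\Vert_{\varphi,W}\lesssim E_n(u)_{\varphi,W}\leq\mathbf{C}_1\Omega_k(u,1/n)_{\varphi,W}$; then Lemma \ref{y} is applied to $u$ (not to $f$), so the small term is $2^{-mk}\Vert u\Vert_{\varphi,W}$ — the very quantity being estimated — and choosing $m$ large (depending only on $k,\varphi,W$) it is absorbed into the left-hand side, yielding $\Vert u\Vert_{\varphi,W}\lesssim\Omega_{k+1}(u,1/n)_{\varphi,W}\lesssim\Omega_{k+1}(f,1/n)_{\varphi,W}$. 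Your proposal contains neither the base case nor this absorption-on-$u$ device, and without one of them (or an alternative direct argument, e.g.\ via Theorem \ref{mmt} applied to the bounded multiplier $(1-\frac{\sin(j/2n)}{j/2n})^{-k}$ on frequencies $j>n$, which you did not propose), the integer case is not established.
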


If $W\equiv 1$, $\varphi \left( x\right) =x^{p}$, $1\leq p<\infty $, on more
general Homogenous Banach spaces this theorem was obtained in \cite{dh}.
When $1<p<\infty $, $W\in A_{p}$, $\varphi \left( x\right) =x^{p}$, $k=1$, (%
\ref{JT}) was proved in \cite{Ky1,Ky2}. For $k=1$ see the papers \cite%
{ai11,akg12}. Under the conditions $k=1$ and convex $\varphi $ some variant
of (\ref{JT}) was obtained in \cite{ig06}.

We say that a function $f\in L_{W}^{\varphi },$ ($\varphi \in Q$, $W\in
A_{p\left( \varphi \right) }$), has a Weyl's derivative $f^{\left( \alpha
\right) }$ of degree $\alpha \in \mathbb{R}^{+}$ if the series%
\begin{equation*}
\sum\limits_{k=1}^{\infty }k^{\alpha }\left( a_{k}\left( f\right) \cos
k\left( x+\frac{\alpha \pi }{2k}\right) +b_{k}\left( f\right) \sin k\left( x+%
\frac{\alpha \pi }{2k}\right) \right)
\end{equation*}%
is the Fourier series of function $f^{\left( \alpha \right) }.$

Let $W_{\varphi ,W}^{\alpha }$, $\alpha \in \mathbb{R}^{+}$ be the class of
functions $f\in L_{W}^{\varphi }$ such that $f^{\left( \alpha \right) }\in
L_{W}^{\varphi }$. Let $\alpha \in \mathbb{R}^{+}$ and $f\in W_{\varphi
,W}^{\alpha }$, then for every $n\in \mathbb{N}$\ there exist (\cite[Theorem
1]{ai11}) positive constants, independent of $n,$\ such that%
\begin{equation}
E_{n}\left( f\right) _{\varphi ,W}\lesssim \frac{1}{n^{\alpha }}E_{n}\left(
f^{\left( \alpha \right) }\right) _{\varphi ,W}.  \label{so}
\end{equation}%
Using the inequality (\ref{so}) and the inequality (\ref{JT}) we have the
following second type Jackson theorem.

\begin{theorem}
\label{secotype}Let $\alpha ,k\in \mathbb{R}^{+}\cup \left\{ 0\right\} $, $%
\varphi \in Q$, $W\in A_{p\left( \varphi \right) }$ and $f\in W_{\varphi
,W}^{\alpha }$, then for every $n\in \mathbb{N}$\ there exists a constant,
independent of $n$,\ such that%
\begin{equation*}
E_{n}\left( f\right) _{\varphi ,W}\lesssim \frac{1}{n^{\alpha }}\Omega
_{k}\left( f^{\left( \alpha \right) },\frac{1}{n}\right) _{\varphi ,W}
\end{equation*}%
holds.
\end{theorem}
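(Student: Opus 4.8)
The plan is to combine the two ingredients the paper already provides: the ``commutation'' inequality \eqref{so}, which says $E_n(f)_{\varphi,W}\lesssim n^{-\alpha}E_n(f^{(\alpha)})_{\varphi,W}$ for $f\in W_{\varphi,W}^{\alpha}$, together with the Jackson estimate \eqref{JT} of Theorem \ref{teo2}. The target inequality for Theorem \ref{secotype} is
\[
E_n(f)_{\varphi,W}\lesssim \frac{1}{n^{\alpha}}\,\Omega_k\!\left(f^{(\alpha)},\tfrac1n\right)_{\varphi,W},
\]
so the route is simply: first replace $E_n(f)_{\varphi,W}$ by $n^{-\alpha}E_n(f^{(\alpha)})_{\varphi,W}$ using \eqref{so}, and then bound $E_n(f^{(\alpha)})_{\varphi,W}$ by $\Omega_k(f^{(\alpha)},1/n)_{\varphi,W}$ using \eqref{JT} applied to the function $g:=f^{(\alpha)}\in L_{W}^{\varphi}$. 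Composing the two constants gives a bound depending only on $k$, $\varphi$, $W$, and being careful that neither constant depends on $n$.

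First I would dispose of the trivial degenerate cases: if $\alpha=0$ the statement is exactly Theorem \ref{teo2} (with the convention $\Omega_0(f,\delta)_{\varphi,W}=\|f\|_{\varphi,W}$ handled separately when $k=0$ as well, in which case one just uses $E_n(f)_{\varphi,W}\le\|f-S_n f\|_{\varphi,W}\lesssim\|f\|_{\varphi,W}$ from \eqref{esen}); and if $k=0$ but $\alpha>0$ the right side is $n^{-\alpha}\|f^{(\alpha)}\|_{\varphi,W}$ and one invokes only \eqref{so} followed by $E_n(g)_{\varphi,W}\le\|g\|_{\varphi,W}$. So assume $\alpha,k\in\mathbb{R}^{+}$. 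Then I would write, for $n\in\mathbb{N}$,
\[
E_n(f)_{\varphi,W}\lesssim \frac{1}{n^{\alpha}}\,E_n\!\left(f^{(\alpha)}\right)_{\varphi,W}\lesssim \frac{1}{n^{\alpha}}\,\Omega_k\!\left(f^{(\alpha)},\tfrac1n\right)_{\varphi,W},
\]
where the first $\lesssim$ is \eqref{so} (valid since $f\in W_{\varphi,W}^{\alpha}$ means $f^{(\alpha)}\in L_{W}^{\varphi}$) and the second is \eqref{JT} with $f$ replaced by $f^{(\alpha)}$ (valid since $f^{(\alpha)}\in L_{W}^{\varphi}$ and $\varphi\in Q$, $W\in A_{p(\varphi)}$). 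The implied constants are those from \eqref{so} and \eqref{JT}, hence depend only on $\alpha$ (equivalently absorbed, since one may take it harmless) and on $k$, $\varphi$, $W$, and crucially not on $n$; that finishes the estimate.

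Honestly, there is no real obstacle here — the statement is a one-line corollary obtained by chaining two previously established inequalities, and the only thing to watch is bookkeeping: making sure \eqref{so} is being applied with the correct hypothesis class $W_{\varphi,W}^{\alpha}$, making sure the constant in \eqref{so} is genuinely $n$-independent (which the paper asserts), and treating the boundary conventions $\alpha=0$ and $k=0$ with the stated definition $\Omega_0(\cdot,\delta)_{\varphi,W}=\|\cdot\|_{\varphi,W}$. If one wanted to be fully rigorous about the case $f\notin W_{\varphi,W}^{\alpha}$ one notes the hypothesis already excludes it. So the proof is essentially: invoke \eqref{so}, invoke \eqref{JT}, multiply the constants, done.
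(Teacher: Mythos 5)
Your proof is correct and follows exactly the route the paper intends: Theorem \ref{secotype} is obtained by chaining the derivative estimate (\ref{so}) with the Jackson inequality (\ref{JT}) applied to $f^{\left( \alpha \right) }$, which is precisely how the paper derives it. Your extra care with the degenerate cases $\alpha =0$ and $k=0$ is fine bookkeeping but does not change the argument.
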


When $k=1$ the last inequality was proved in \cite{ai11,akg12}. For $k=1$, $%
\alpha \in \mathbb{N}$ and convex $\varphi $ some variant of the last
inequality was obtained in \cite{ig06}.

\section{Refinements of Jackson type inequality and extrapolation}

After the inequality (\ref{JT}) we obtained a refinement of (\ref{JT}).

\begin{theorem}
\label{teo4} Under the conditions of Theorem \ref{teo2} we have the
inequality%
\begin{equation}
\left( \prod\limits_{j=1}^{n}E_{j}\left( f\right) _{\varphi ,W}\right)
^{1/n}\lesssim \Omega _{k}\left( f,\frac{1}{n}\right) _{\varphi ,W}
\label{IJ}
\end{equation}%
with some constant depending only on $\varphi ,k$ and $W$.
\end{theorem}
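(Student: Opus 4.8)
The plan is to deduce the geometric-mean refinement \eqref{IJ} from the already-established Jackson inequality \eqref{JT} together with the monotonicity of the best-approximation numbers and a standard "dyadic blocking" argument. The key observation is that $E_j(f)_{\varphi,W}$ is non-increasing in $j$, so the geometric mean $\left(\prod_{j=1}^n E_j(f)_{\varphi,W}\right)^{1/n}$ is controlled by a suitable average of $E_{2^s}(f)_{\varphi,W}$ over dyadic scales $2^s\le n$. First I would fix $n$ and choose $m\in\mathbb{N}$ with $2^{m-1}\le n<2^m$. Grouping the indices $j=1,\dots,n$ into dyadic blocks $[2^{s-1},2^s)$ and using $E_j \le E_{2^{s-1}}$ on each block, I get
\begin{equation*}
\frac{1}{n}\sum_{j=1}^n \log E_j(f)_{\varphi,W} \le \frac{1}{n}\sum_{s=0}^{m}\#\{j\le n: 2^{s-1}\le j<2^s\}\,\log E_{2^{s-1}}(f)_{\varphi,W},
\end{equation*}
and since the block of scale $s$ has at most $2^{s}$ elements while the total is $n\ge 2^{m-1}$, the weights $2^s/n$ sum to a bounded multiple of $1$ with the top scale $s=m$ carrying a fixed positive proportion. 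Exponentiating, $\left(\prod_{j=1}^n E_j\right)^{1/n}\lesssim \prod_{s=0}^{m} E_{2^{s-1}}^{\,2^s/n}$.

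Next I would invoke \eqref{JT} at each dyadic scale: $E_{2^{s-1}}(f)_{\varphi,W}\lesssim \Omega_k\!\left(f,2^{-(s-1)}\right)_{\varphi,W}$. Because the modulus of smoothness is non-decreasing in $\delta$ and, by the standard near-doubling property $\Omega_k(f,\lambda\delta)_{\varphi,W}\lesssim (1+\lambda)^{2k}\Omega_k(f,\delta)_{\varphi,W}$ (which follows from the boundedness estimate \eqref{AX} and Lemma \ref{y} exactly as in the unweighted theory), one has $\Omega_k(f,2^{-(s-1)})_{\varphi,W}\lesssim 2^{(m-s)2k}\,\Omega_k(f,2^{-m})_{\varphi,W}\approx 2^{(m-s)2k}\,\Omega_k(f,1/n)_{\varphi,W}$. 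Substituting and collecting the exponent of $\Omega_k(f,1/n)_{\varphi,W}$, which is $\sum_{s}2^s/n\approx 1$ up to bounded constants, the remaining factor is $\prod_{s=0}^m 2^{(m-s)2k\cdot 2^s/n}$, whose logarithm is $\frac{2k}{n}\sum_{s=0}^m (m-s)2^s$; since $\sum_{s=0}^m(m-s)2^s\le C\,2^m\le C'n$, this product is bounded by an absolute constant depending only on $k$. This yields \eqref{IJ}.

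The one technical point deserving care — and the step I expect to be the main obstacle — is justifying the "doubling in $\delta$" property $\Omega_k(f,\lambda\delta)_{\varphi,W}\lesssim (1+\lambda)^{2k}\Omega_k(f,\delta)_{\varphi,W}$ in the fractional-order weighted setting, since the usual proof passes through the $K$-functional or Realization equivalence, which in this paper is proved later. To keep the argument self-contained I would instead derive the needed scale comparison directly from Lemma \ref{y}: iterating that lemma gives $\left\Vert(I-\mathcal{A}_h)^k f\right\Vert_{\varphi,W}\lesssim 2^{-mk}\left\Vert f\right\Vert_{\varphi,W}+C_m\left\Vert(I-\mathcal{A}_h)^{k+\lceil\cdots\rceil}f\right\Vert_{\varphi,W}$, and combining this with the trivial bound $\left\Vert(I-\mathcal{A}_h)^k f\right\Vert_{\varphi,W}\lesssim\left\Vert f\right\Vert_{\varphi,W}$ from \eqref{AX}, one obtains the comparison of $\Omega_k$ at scales $\delta$ and $2\delta$ with a constant depending only on $k,\varphi,W$; iterating $m$ times gives the factor $2^{2km}$ (the exponent $2k$ rather than $k$ arising because $\mathcal{A}_h$ "sees" $h^2$, a fact visible from the Fourier multiplier $\widehat{\mathcal{A}_h}(\nu)=\frac{\sin(\nu h/2)}{\nu h/2}=1-\frac{(\nu h)^2}{24}+\cdots$). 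Once this scale-comparison lemma is in hand, the dyadic summation above is entirely routine, and the constants depend only on $k$, $\varphi$ and $W$ as claimed.
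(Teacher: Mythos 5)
Your overall strategy coincides with the paper's: combine the Jackson inequality (\ref{JT}) at every scale with the scaling property $\Omega_{k}\left( f,\lambda \delta \right) _{\varphi ,W}\lesssim \left( 1+\lfloor \lambda \rfloor \right) ^{2k}\Omega _{k}\left( f,\delta \right) _{\varphi ,W}$ and then take a geometric average. The paper (following Natanson--Timan) applies this with $\lambda =n/v$ for every $v\leq n$ and controls $\bigl( \prod_{v=1}^{n}\left( 1+n/v\right) \bigr) ^{1/n}$ by Stirling's formula, whereas you organize the same estimate over dyadic blocks; your dyadic bookkeeping is correct and the constants come out depending only on $k,\varphi ,W$ as claimed.

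The genuine gap is exactly at the point you flagged: the claim that the scale comparison can be derived ``directly from Lemma \ref{y}'' together with (\ref{AX}). Lemma \ref{y} compares the orders $k$ and $k+1$ at the \emph{same} step $h$ (in its proof the operators $\mathcal{A}_{h}^{2^{m}}$ are iterates of $\mathcal{A}_{h}$, not Steklov means with step $2^{m}h$), and all constants there are independent of $h$; consequently no combination of Lemma \ref{y} with the uniform bound (\ref{AX}) can yield an inequality that distinguishes $\Omega _{k}\left( f,2\delta \right) _{\varphi ,W}$ from $\Omega _{k}\left( f,\delta \right) _{\varphi ,W}$, let alone with the loss $2^{2k}$ per doubling of the step. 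For the Steklov-mean modulus there is no simple subadditivity in $h$ of the kind available for ordinary differences, and the property you need is precisely Corollary \ref{co}, which the paper obtains from the equivalence (\ref{Kf}) with the $K$-functional (Theorem \ref{rea}); that equivalence rests on Theorem \ref{teo2} and Lemmas \ref{lem2}, \ref{lem3}, \ref{kubu} and nowhere uses Theorem \ref{teo4}, so citing it involves no circularity and your attempted detour is unnecessary. Replace the Lemma \ref{y} argument by an appeal to Corollary \ref{co} (or to the realization equivalence (\ref{real})/(\ref{Kf})) and the rest of your proof goes through.
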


\begin{remark}
\label{rem2} The inequality (\ref{IJ}) is never worse than the inequality (%
\ref{JT}), because using $E_{n}\left( f\right) _{\varphi ,W}\searrow 0$ as $%
n\rightarrow \infty $, we get%
\begin{equation*}
E_{n}\left( f\right) _{\varphi ,W}\leq \left(
\prod\limits_{j=1}^{n}E_{j}\left( f\right) _{\varphi ,W}\right) ^{1/n}\leq
C_{\varphi ,W,k}\Omega _{k}\left( f,\frac{1}{n}\right) _{\varphi ,W}
\end{equation*}%
where the constant $C_{\varphi ,W,k}$ depend only on $\varphi ,W$ and $k$.
On the other hand, in some cases, the inequality (\ref{IJ}) gives better
results than that of inequality (\ref{JT}). For example, if $E_{n}\left(
f\right) _{\varphi ,W}=2^{-n}$, then inequality (\ref{JT}) gives $\Omega
_{k}\left( f,\frac{1}{n}\right) _{\varphi ,W}\geq C_{\varphi ,k,W}2^{-n}$
but inequality (\ref{IJ}) yields $\Omega _{k}\left( f,\frac{1}{n}\right)
_{\varphi ,W}\geq C_{\varphi ,k,W}2^{-n/2}.$
\end{remark}

In $L^{\infty }$ Theorem \ref{teo4} was obtained (\cite{TiNa}) in 1979 by
Natanson and M. Timan for $k\in \mathbb{N}$ and classical moduli of
continuity.

\begin{theorem}
\label{proc}Let $\varphi \in Y\left[ p,q\right] $ for some $1<p,q<\infty $, $%
\varphi \in Q$, $W\in A_{p\left( \varphi \right) }$ and $f\in L_{W}^{\varphi
}$. If $n\in \mathbb{N}$, $k\in \mathbb{R}^{+}$ and $\beta :=\max \left\{
2,q\right\} $, then there exists a positive constant depending only on $k$\
and $\varphi ,W$ such that%
\begin{equation}
\frac{1}{n^{2k}}\left\{ \sum\limits_{\nu =1}^{n}\nu ^{2\beta k-1}E_{\nu
-1}^{\beta }\left( f\right) _{\varphi ,W}\right\} ^{1/\beta }\lesssim \Omega
_{k}\left( f,\frac{1}{n}\right) _{\varphi ,W}  \label{impj}
\end{equation}%
holds.
\end{theorem}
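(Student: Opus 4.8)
The plan is to prove the refined Jackson inequality \eqref{impj} by combining the basic Jackson estimate \eqref{JT} of Theorem \ref{teo2} with a suitable summation (Hardy-type) argument, exploiting the structure $\varphi\in Y[p,q]$ through the Littlewood--Paley Theorem \ref{lpt} and the Marcinkiewicz multiplier Theorem \ref{mmt}. First I would reduce matters to dyadic blocks: for $f\sim\sum_j A_j(\cdot,f)$ set $\triangledown_l$ as in \eqref{kla}, and observe that $\varphi\in Y[p,q]$ with $1<p,q<\infty$ forces $\varphi\in\Delta_2$ and $\varphi^\theta$ quasiconvex for suitable $\theta\in(0,1)$, so all the transference machinery of \S3 and the boundedness results \eqref{esen} apply. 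Since $\beta=\max\{2,q\}\ge 2$, the natural move is to pass from the $\ell^2$-square function in Theorem \ref{lpt} to an $\ell^\beta$-estimate; because $\beta\ge 2$ one has $\|\{c_l\}\|_{\ell^\beta}\le\|\{c_l\}\|_{\ell^2}$ pointwise, which gives the ``cheap'' direction, but for the refined \emph{lower} bound on the modulus one needs the reverse-type control that comes from $\varphi(u)/u^q$ being non-increasing (the upper index $q$), allowing one to bound an $L_W^\varphi$ norm of a square function from below by an $\ell^\beta$-sum of block norms.

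The core estimate I would establish is a block-wise comparison: for each dyadic index $l$,
\begin{equation*}
\left\Vert \triangledown_l(\cdot,f)\right\Vert_{\varphi,W}\lesssim E_{2^{l-1}-1}(f)_{\varphi,W},
\end{equation*}
which follows from \eqref{esen} since $\triangledown_l f = S_{2^l-1}f - S_{2^{l-1}-1}f$ and best approximation is monotone. Conversely, applying the Littlewood--Paley equivalence to $f - S_{n}(f)$ (or to an appropriately filtered piece of $f$) and using that the multiplier sequence $\lambda_j = (1-\text{(symbol of }\mathcal{A}_h)^{\,})^k \approx (j h)^{2k}$ for $j\le 1/h$ satisfies the Marcinkiewicz conditions of Theorem \ref{mmt} uniformly, one obtains
\begin{equation*}
\left\Vert (I-\mathcal{A}_h)^k f\right\Vert_{\varphi,W}\gtrsim \left\Vert\left(\sum_{l}\min\{1,(2^l h)^{2k}\}^2\,|\triangledown_l f|^2\right)^{1/2}\right\Vert_{\varphi,W}.
\end{equation*}
Taking $h=1/n$ and restricting the sum to the range $2^l\le n$ (where the cutoff factor is $(2^l/n)^{2k}$) reduces the right side, via the $Y[p,q]$ structure and Theorem \ref{lpt}, to $n^{-2k}\big\{\sum_{2^l\le n}2^{2\beta k l}\|\triangledown_l f\|_{\varphi,W}^\beta\big\}^{1/\beta}$ up to constants. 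Then a standard Abel/Hardy summation converting the dyadic sum $\sum_{2^l\le n}2^{2\beta kl}E_{2^{l-1}-1}^\beta(f)_{\varphi,W}$ into the full sum $\sum_{\nu=1}^n \nu^{2\beta k-1}E_{\nu-1}^\beta(f)_{\varphi,W}$ — using monotonicity of $E_\nu(f)_{\varphi,W}$ and $\sum_{2^{l-1}\le\nu<2^l}\nu^{2\beta k-1}\approx 2^{2\beta kl}$ — yields \eqref{impj}.

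The main obstacle I expect is the passage from the $\ell^2$ square-function norm delivered by Theorem \ref{lpt} to the $\ell^\beta$ sum with the precise exponent $\beta=\max\{2,q\}$: this is exactly where one must use both indices of $Y[p,q]$, and it requires an interpolation-type or Hölder-type argument inside the Orlicz norm rather than a pointwise inequality (a pointwise $\ell^2\hookrightarrow\ell^\beta$ bound only gives one inequality, and the wrong one for the refinement). I would handle this by a Marcinkiewicz interpolation argument in the spirit of Theorem \ref{l1}, or alternatively by invoking the known vector-valued Littlewood--Paley inequalities in weighted Lebesgue spaces $L_W^{p_0}$ (valid for $W\in A_1$, $1<p_0<\infty$) and then transferring them to $L_W^\varphi$ via the Extrapolation Theorem \ref{ET}, applied to the family of couples $(F,g)$ where $F$ is the $\ell^\beta$-sum of block operators and $g = f - S_{n-1}f$. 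A secondary technical point is the uniform verification of the Marcinkiewicz multiplier conditions for the truncated symbol of $(I-\mathcal{A}_{1/n})^k$ across dyadic blocks, which is routine since $1-\big(\frac{2}{jh}\sin\frac{jh}{2}\big)$ behaves smoothly and the differences telescope, giving a bound independent of $n$ and the block index.
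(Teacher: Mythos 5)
Your overall architecture (dyadic blocks, Littlewood--Paley, Marcinkiewicz multipliers, and the use of the upper index $q$ of $Y[p,q]$ to pass between the $\ell^{2}$ square function and an $\ell^{\beta}$ sum of block norms) matches the ingredients of the paper's proof, but the chain you write down does not close, and the break is at the step you call the core estimate. The inequality $\left\Vert \triangledown _{l}\right\Vert _{\varphi ,W}\lesssim E_{2^{l-1}-1}\left( f\right) _{\varphi ,W}$ is true, but it points the wrong way: it is the ingredient for the \emph{inverse} (Marchaud-type) result, Corollary \ref{corlll}. For \eqref{impj} you must bound each $E_{\nu -1}\left( f\right) _{\varphi ,W}$ from \emph{above}, and your final ``Abel/Hardy summation'' step, which converts $\sum_{2^{l}\leq n}2^{2\beta kl}\left\Vert \triangledown _{l}f\right\Vert _{\varphi ,W}^{\beta }$ into $\sum_{\nu =1}^{n}\nu ^{2\beta k-1}E_{\nu -1}^{\beta }\left( f\right) _{\varphi ,W}$, implicitly needs $E_{2^{l-1}-1}\left( f\right) _{\varphi ,W}\lesssim \left\Vert \triangledown _{l}f\right\Vert _{\varphi ,W}$. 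That is false: a single block cannot control the whole tail (take $f$ with $A_{j}\left( \cdot ,f\right) =0$ for $2^{l-1}\leq j<2^{l}$ but nonzero higher harmonics). The correct upper bound is $E_{2^{l-1}-1}\left( f\right) _{\varphi ,W}\lesssim \bigl\Vert \bigl( \sum_{l^{\prime }\geq l}\left\vert \triangledown _{l^{\prime }}\right\vert ^{2}\bigr) ^{1/2}\bigr\Vert _{\varphi ,W}$, i.e.\ the tail square function, and the whole difficulty of the theorem is exchanging the outer weighted $\ell^{\beta}$ sum with these tails. The paper does exactly this: it starts from the sum of $E$'s, bounds each by a tail square function via \eqref{L-P}, then uses $q$-concavity of $L_{W}^{\varphi }$, an Abel transformation and Minkowski's inequality to collapse the tails to single blocks inside one Orlicz norm, and finally applies Theorem \ref{mmt} twice to recognize $\left\Vert (I-\mathcal{A}_{1/n})^{k}f\right\Vert _{\varphi ,W}$. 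A second, related omission: by ``restricting the sum to the range $2^{l}\leq n$'' you discard the frequencies above $n$, yet these contribute to every $E_{\nu -1}\left( f\right) _{\varphi ,W}$; in the paper this high-frequency part is split off and controlled separately by $E_{2^{m}-1}\left( f\right) _{\varphi ,W}\lesssim \Omega _{k}\left( f,1/n\right) _{\varphi ,W}$ via Theorem \ref{teo2}, a term that never appears in your outline.

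Your strategy is repairable without abandoning its direction: replace the false block-wise lower bound by the tail estimate above, apply a discrete Hardy-type inequality with the geometric weights $2^{2\beta kl}$ to pass from tails $\sum_{l^{\prime }=l}^{m}\left\Vert \triangledown _{l^{\prime }}\right\Vert _{\varphi ,W}$ back to single blocks, and add the extra term $2^{2\beta km}E_{2^{m}-1}^{\beta }\left( f\right) _{\varphi ,W}$ handled by Theorem \ref{teo2}. With those two insertions your ``reverse'' route (starting from $\Omega _{k}$ and peeling off the weighted square function) becomes a legitimate mirror image of the paper's argument; as written, however, the proof has a genuine gap.
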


Under the conditions $W\equiv 1$, $\varphi \left( x\right) =x^{p}$, $%
1<p<\infty $, a (\ref{impj})-type inequality was proved first in \cite{TiM1}
for the classical modulus of smoothness. In case $k=1$ see also \cite{A,acc}.

Before the proof of Theorem \ref{proc} we need the following extrapolation
theorem.

\begin{theorem}
\label{ET} Let $\varphi \in \Delta _{2}$ be quasiconvex and $\varphi \left(
t^{1/p_{0}}\right) $ be a convex function for some $p_{0}\in \left( 1,\infty
\right) $. We suppose that $\mathcal{F}$ is a family of couples of
nonnegative functions such that%
\begin{equation*}
\int\limits_{\mathsf{T}}F\left( x\right) ^{p_{0}}W\left( x\right) dx\lesssim
\int\limits_{\mathsf{T}}g\left( x\right) ^{p_{0}}W\left( x\right) dx\text{%
,\quad }\left( f,g\right) \in \mathcal{F}
\end{equation*}%
for all $W\in A_{1}$ provided the left hand side is finite. Then, for all $%
W\in A_{p\left( \varphi \right) }$, the inequality%
\begin{equation*}
\int\limits_{\mathsf{T}}\varphi \left( F\left( x\right) \right) W\left(
x\right) dx\lesssim \int\limits_{\mathsf{T}}\varphi \left( g\left( x\right)
\right) W\left( x\right) dx
\end{equation*}%
holds for any $\left( F,g\right) \in \mathcal{F}$ when the left-hand side is
finite.
\end{theorem}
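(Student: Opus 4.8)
The plan is to reduce the Orlicz-space statement to the known Rubio de Francia extrapolation theorem in weighted Lebesgue spaces and then transfer back using a factorization of the quasiconvex function $\varphi$. First I would record the hypothesis in its usable form: since $\varphi\in\Delta_2$ is quasiconvex and $\varphi(t^{1/p_0})$ is convex for some $p_0\in(1,\infty)$, the function $\Phi(t):=\varphi(t^{1/p_0})$ is a convex Young function satisfying $\Delta_2$ together with its complementary function (the last because $\varphi$ being quasiconvex of indice $p(\varphi)<\infty$ forces $\Phi$ to be of strictly sublinear type, i.e. $\Phi\in\Delta_2\cap\nabla_2$ in the classical terminology). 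Consequently the Orlicz space $L^{\Phi}_W$ is reflexive and, crucially, one has the identification $\|F\|_{\varphi,W}\approx\big\||F|^{p_0}\big\|_{\Phi,W}^{1/p_0}$, so that controlling $\int_{\mathsf T}\varphi(F)W\,dx=\int_{\mathsf T}\Phi(F^{p_0})W\,dx$ is the same as controlling $F^{p_0}$ in the convex Orlicz modular generated by $\Phi$. Thus matters are reduced to proving: if $\int_{\mathsf T}G^{p_0}W\lesssim\int_{\mathsf T} H^{p_0}W$ for all $W\in A_1$ (apply this with $G=F$, $H=g$), then $\int_{\mathsf T}\Phi(G^{p_0})W\lesssim\int_{\mathsf T}\Phi(H^{p_0})W$ for all $W\in A_{p(\varphi)}$.

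Next I would invoke the abstract extrapolation principle of Rubio de Francia in the sharp form due to Cruz-Uribe--Martell--Pérez: a family estimate $\|G\|_{L^{p_0}(v)}\le C\|H\|_{L^{p_0}(v)}$ valid for all $v\in A_1$ self-improves to $\|G\|_{L^{r}(v)}\le C_r\|H\|_{L^{r}(v)}$ for every $r$ with $1<r<\infty$ and every $v\in A_{r/p_0}$ (equivalently $v^{1-(r/p_0)'}\in A_{(r/p_0)'}$), with constant depending only on $r$ and $[v]_{A_{r/p_0}}$; and, more to the point, it self-improves to a vector-valued / Banach-function-space estimate on any rearrangement-invariant space $X$ (here $X=L^\Phi_W$) whose Boyd indices lie strictly between $p_0$ and $\infty$, provided the maximal operator $M$ is bounded on the associate-space scale determined by $W$. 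Concretely: since $W\in A_{p(\varphi)}$ and $W_\ast=W^{1-(p(\varphi))'}\in A_{(p(\varphi))'}$, and since $M$ is bounded on $L^\Phi_W$ and on $L^{\tilde\Phi}_{W_\ast}$ by the Gallardo--Kokilashvili criterion (\cite{gk94}, already used for $L^\varphi_W$ in this paper), the Rubio de Francia iteration algorithm applied to $M$ in both $L^\Phi_W$ and its associate space produces an $A_1$ majorant pointwise comparable to any fixed function, and running the hypothesis along that majorant upgrades the $L^{p_0}(A_1)$ bound to the $L^\Phi_W$ bound. Plugging $G=F$, $H=g$ and undoing the substitution $t\mapsto t^{p_0}$ gives exactly $\int_{\mathsf T}\varphi(F)W\lesssim\int_{\mathsf T}\varphi(g)W$.

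The main obstacle is the second step: verifying that the quasiconvexity hypotheses genuinely put the Boyd (or Matuszewska--Orlicz) indices of $L^\Phi_W$ strictly inside $(p_0,\infty)$, so that the Rubio de Francia algorithm converges. This is where $\varphi\in\Delta_2$ and "$\varphi(t^{1/p_0})$ convex" are both needed: the first gives the finite upper index (the space is far from $L^\infty$, so $M$ and the extrapolation operator are bounded on the associate side), and the second gives the lower index $\ge p_0$ (which is what makes the $L^{p_0}(A_1)$ input the correct "endpoint" for the iteration). I would isolate this as the technical core, proving $M:L^\Phi_W\to L^\Phi_W$ and $M:L^{\tilde\Phi}_{W_\ast}\to L^{\tilde\Phi}_{W_\ast}$ boundedly via \cite{gk94} and then citing the off-diagonal/Banach-space extrapolation machinery (Cruz-Uribe--Martell--Pérez) in black-box form; the remaining substitutions $F\leftrightarrow F^{p_0}$, $\varphi\leftrightarrow\Phi$ and the passage between the modular $\int\varphi(\cdot)W$ and the Luxemburg norm via (\ref{fac}) are routine. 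One should also take care that all inequalities are asserted only when the left-hand sides are finite, which is exactly the proviso in the statement and is preserved under every step above since each step only uses the hypothesis in that conditional form and each intermediate quantity is dominated by the previous finite one.
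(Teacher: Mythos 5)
Your proposal correctly identifies the general framework (Rubio de Francia extrapolation adapted to the Orlicz setting, in the spirit of Curbera--Garc\'{\i}a-Cuerva--Martell--P\'{e}rez), but there is a genuine gap at the decisive step: the theorem asserts a \emph{modular} inequality $\int_{\mathsf{T}}\varphi(F)W\,dx\lesssim\int_{\mathsf{T}}\varphi(g)W\,dx$, while the machinery you invoke in black-box form (extrapolation to rearrangement-invariant Banach function spaces under Boyd-index conditions, with the maximal operator bounded on the space and its associate) delivers a \emph{norm} inequality $\Vert F\Vert_{\varphi,W}\lesssim\Vert g\Vert_{\varphi,W}$. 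For a general quasiconvex $\varphi$ these are not equivalent, even under $\Delta_{2}$: a bound $\Vert F\Vert_{(\varphi),W}\leq C\Vert g\Vert_{(\varphi),W}$ only controls $\int\varphi(F/\lambda)W$ at the single level $\lambda\approx\Vert g\Vert_{(\varphi),W}$ and cannot be converted into a bound of $\int\varphi(F)W$ by $\int\varphi(g)W$ with a function-independent constant; the equivalence (\ref{fac}) relates the two norms, not a norm to a modular. So the sentence ``undoing the substitution \dots\ gives exactly $\int_{\mathsf{T}}\varphi(F)W\lesssim\int_{\mathsf{T}}\varphi(g)W$'' and the claim that the passage between modular and norm is ``routine'' conceal the entire difficulty; your preliminary identity $\int\varphi(F)W=\int\Phi(F^{p_{0}})W$ is a tautological restatement of the goal rather than a reduction.

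The paper's proof stays at the modular level throughout, modifying Theorem 3.1 of \cite{CGMP} (and \cite{k09} for $W\equiv1$): with $\psi(u)=\varphi(u^{1/p_{0}})$ it builds the specific auxiliary function $h=\theta\varepsilon_{0}\psi(F^{p_{0}})/(a_{0}F^{p_{0}})$, runs the Rubio de Francia algorithm $Rh$ with the \emph{weighted} maximal operator $M_{W}$ (whose modular boundedness on $\tilde{\psi}$ with respect to $W\,dx$ is proved as in Proposition 5.1 of \cite{CGMP}, using the quasiconvexity of $\tilde{\psi}^{\alpha}$ coming from $\psi\in\Delta_{2}$), obtains $Rh\in A_{1}(W)$ so that $Rh\,W$ is an admissible $A_{1}$ weight for the hypothesis (Lemma \ref{A11}), and then combines Young's inequality for the pair $(\psi,\tilde{\psi})$, the quantitative estimates $\tilde{\psi}(\theta t)\leq a_{0}\theta^{1/\alpha}\tilde{\psi}(a_{0}t)$ and $\tilde{\psi}(\varepsilon_{0}\psi(t)/t)\leq\psi(t)$, and an absorption argument with a small $\theta$ that uses precisely the finiteness of $\int\varphi(F)W$. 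None of these modular-specific devices appears in your outline; moreover you work with the unweighted maximal operator $M$ and Boyd-index conditions on $L^{\Phi}_{W}$, $L^{\tilde{\Phi}}_{W_{\ast}}$ that are neither verified from the stated hypotheses ($\varphi\in\Delta_{2}$, $\varphi(t^{1/p_{0}})$ convex do not by themselves give the lower-index strict inequality you assert) nor what the argument actually requires. To repair the proposal you would either have to cite the \emph{modular} extrapolation theorem of \cite{CGMP} directly and check its hypotheses for $\psi$, $\tilde{\psi}$, $W$ --- which is essentially reproducing the paper's proof --- or carry out the $h$, $Rh$, Young-plus-absorption scheme yourself.
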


For $W\equiv 1$ this extrapolation theorem was obtained in \cite{k09}.

To prove Theorem \ref{proc} we will require also the Marcinkiewicz
multiplier theorem and the Littlewood-Paley type theorems: Using
interpolation Theorem \ref{l1} and Marcinkiewicz multiplier theorem (\cite[%
Theorems 1 and 2]{k80}) for Lebesgue spaces with Muckenhoupt weights (one
dimensional 2$\pi $-periodic versions) (see also \cite[Theorem 4.4, p.956]%
{bg03}) we obtain the following Marcinkiewicz multiplier theorem

\begin{theorem}
\label{mmt}Let a sequence $\left\{ \lambda _{l}\right\} $\ of real numbers
be satisfy the properties%
\begin{equation}
\left\vert \lambda _{l}\right\vert \leq A\text{,\quad }\sum%
\limits_{l=2^{m-1}}^{2^{m}-1}\left\vert \lambda _{l}-\lambda
_{l+1}\right\vert \leq A  \label{MS}
\end{equation}%
for all $l,m\in \mathbb{N}$, where $A$\ doesn't depend on $l$\ and $m$. If $%
\varphi \in Q$, $W\in A_{p\left( \varphi \right) }$, $f\in L_{W}^{\varphi }$%
,\ then there is a function $G\in L_{W}^{\varphi }$\emph{\ }such that the
series $\sum\nolimits_{k=0}^{\infty }\lambda _{k}A_{k}\left( \cdot ,f\right) 
$\ is Fourier series of $G$\ and%
\begin{equation}
\left\Vert G\right\Vert _{\varphi ,W}\lesssim \left\Vert f\right\Vert
_{\varphi ,W}  \label{mult}
\end{equation}%
holds with a positive constant,\ does not depend on $f$.
\end{theorem}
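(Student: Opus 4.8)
The plan is to transfer the classical weighted Marcinkiewicz multiplier theorem for $L^p_W$-spaces ($1<p<\infty$, $W\in A_p$) to the Orlicz setting by means of the Marcinkiewicz-type interpolation Theorem \ref{l1}. First I would fix a sequence $\{\lambda_l\}$ satisfying the hypotheses (\ref{MS}) and define formally the multiplier operator $\Lambda f\sim\sum_{k=0}^\infty\lambda_k A_k(\cdot,f)$ on trigonometric polynomials; since polynomials are dense in $L^\varphi_W$ (Lemma~3 of \cite{K02}, as recalled in \S4), it suffices to prove the norm estimate (\ref{mult}) on that dense class and then extend by continuity, which simultaneously produces the function $G\in L^\varphi_W$ whose Fourier series is $\sum\lambda_k A_k(\cdot,f)$.

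The core step is to invoke Theorems 1 and 2 of \cite{k80} (the one-dimensional $2\pi$-periodic weighted Marcinkiewicz multiplier theorem; see also \cite[Theorem 4.4, p.\,956]{bg03}): for every $r$ with $1<r<\infty$ and every weight $V\in A_r$ one has $\|\Lambda f\|_{r,V}\le C(r,[V]_r,A)\|f\|_{r,V}$. I would then pick exponents $r$ and $s$ with $1\le r<p(\varphi)\le (p(\tilde\varphi))'<s<\infty$ as required in Theorem \ref{l1}. Since $\varphi\in Q$ and $W\in A_{p(\varphi)}$, the weight $W$ lies in $A_r$ and in $A_s$ for such $r,s$ chosen sufficiently close to $p(\varphi)$ and $(p(\tilde\varphi))'$ respectively (this is the standard self-improvement/openness property of Muckenhoupt classes). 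Consequently $\Lambda$ is bounded on $L^r_W$ and on $L^s_W$; in particular it satisfies the two weak-type bounds demanded in the hypotheses of Theorem \ref{l1} with $d\nu_1=d\nu_0=W(x)\,dx$. Applying Theorem \ref{l1} to the (sub)linear operator $\Lambda$ yields
\begin{equation*}
\int_{\mathsf{T}}\varphi\bigl(|\Lambda f(x)|\bigr)W(x)\,dx\lesssim\int_{\mathsf{T}}\varphi\bigl(|f(x)|\bigr)W(x)\,dx,
\end{equation*}
and by the equivalence (\ref{fac}) between the Orlicz norm and the Kolmogorov–Minkowski functional this modular inequality upgrades to the norm inequality $\|\Lambda f\|_{\varphi,W}\lesssim\|f\|_{\varphi,W}$, which is (\ref{mult}).

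The main obstacle I anticipate is the bookkeeping around the admissible range of $r$ and $s$: one must check that the Muckenhoupt class $A_{p(\varphi)}$ really does embed into $A_r\cap A_s$ for a valid choice of $r<p(\varphi)\le(p(\tilde\varphi))'<s$ compatible with Theorem \ref{l1}, and that the constants coming from \cite{k80} stay uniform in the relevant parameters. A secondary technical point is justifying that $\Lambda f$ is well-defined (as an $L^1$, hence distributional, object) before the interpolation is applied; this is handled by working on trigonometric polynomials first, where $\Lambda f$ is a finite sum, and noting that the partial-sum operators are already bounded on $L^\varphi_W$ by (\ref{esen}), so the dense-subset argument goes through cleanly. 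Once these points are settled, the identification of $G$ with the sum of the multiplied Fourier series and the constant's dependence only on $f$ (through $\varphi$, $W$, $A$) are routine.
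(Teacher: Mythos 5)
Your proposal is correct and takes essentially the same route as the paper: both transfer Kurtz's weighted $L^{p}$ multiplier theorem (\cite{k80}, see also \cite{bg03}) to $L_{W}^{\varphi }$ by applying the Marcinkiewicz-type interpolation Theorem \ref{l1} with $d\nu _{0}=d\nu _{1}=W\left( x\right) dx$, after choosing $r<p\left( \varphi \right) \leq \left( p\left( \tilde{\varphi}\right) \right) ^{\prime }<s$ with $W\in A_{r}$. The only place the paper is more careful is the passage from the modular inequality to the norm inequality: instead of invoking (\ref{fac}) alone, it first uses the quasiconvexity estimate $\varphi \left( \alpha x\right) \leq \alpha \varphi \left( Cx\right) $ for $\alpha \in \left( 0,1\right) $ applied to a rescaled $f$ so that the interpolation constant is absorbed before (\ref{fac}) is used — a small scaling step you should spell out rather than treat as immediate.
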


In case $W\equiv 1$, $\varphi \left( x\right) =x^{p}$, $1<p<\infty $, (\ref%
{mult}) was proved by Marcinkiewicz in Theorem 4.14 of \cite{z2}. For $W\in
A_{p}$, $\varphi \left( x\right) =x^{p}$, $1<p<\infty $, (\ref{mult}) was
obtained by Kurtz in \cite{k80} and Berkson-Gillespie \cite[Theorem 4.4,
p.956]{bg03}.

Littlewood-Paley type theorem is

\begin{theorem}
\label{lpt}If $\varphi \in \Delta _{2}$ is quasiconvex and $\varphi \left(
t^{1/p_{0}}\right) $ is a convex function for some $p_{0}\in \left( 1,\infty
\right) $, $W\in A_{p\left( \varphi \right) }$ and $f\in L_{W}^{\varphi }$,
then there are positive constants depending only on $\varphi ,W$ such that%
\begin{equation}
\left\Vert \left( \sum\limits_{\mu =0}^{\infty }\left\vert \triangledown
_{l}\right\vert ^{2}\right) ^{1/2}\right\Vert _{\varphi ,W}\approx
\left\Vert f\right\Vert _{\varphi ,W}  \label{L-P}
\end{equation}%
where $f\left( \cdot \right) \sim \sum\nolimits_{j=0}^{\infty }A_{j}\left(
\cdot ,f\right) $ and $\triangledown _{l}$ be as in (\ref{kla}).
\end{theorem}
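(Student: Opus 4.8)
The plan is to prove the Littlewood--Paley equivalence \eqref{L-P} by transferring the known weighted-$L^p$ Littlewood--Paley inequalities (for $W\in A_p$, $1<p<\infty$) up to the Orlicz setting via the Marcinkiewicz-type interpolation Theorem \ref{l1}. First I would recall that for $1<r<\infty$ and $W\in A_r$ the dyadic Littlewood--Paley square function operator $Sf:=\left(\sum_{l=0}^{\infty}|\triangledown_l(\cdot,f)|^2\right)^{1/2}$ satisfies $\|Sf\|_{r,W}\approx\|f\|_{r,W}$; this is classical for Muckenhoupt weights (it follows, e.g., from the weighted Marcinkiewicz multiplier Theorem \ref{mmt} applied with Rademacher-randomized multipliers $\lambda_l=\pm1$ constant on each dyadic block, together with Khintchine's inequality, exactly as in the unweighted case in \cite{z2}). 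Since $\varphi\in\Delta_2$ is quasiconvex with $\varphi(t^{1/p_0})$ convex, the indices $p(\varphi)$ and $(p(\tilde\varphi))'$ are finite and we may choose exponents $1<r<p(\varphi)\le(p(\tilde\varphi))'<s<\infty$ with $W\in A_r\cap A_s$ (using that $W\in A_{p(\varphi)}$ and the openness of the Muckenhoupt condition, i.e. $A_{p(\varphi)}=\bigcup_{r<p(\varphi)}A_r$). Then the strong $(r,r)$ and $(s,s)$ bounds for $S$ give in particular the weak-type hypotheses of Theorem \ref{l1} with $d\nu_1=d\nu_0=W(x)\,dx$, so
\begin{equation*}
\int_{\mathsf{T}}\varphi\bigl(Sf(x)\bigr)W(x)\,dx\lesssim\int_{\mathsf{T}}\varphi\bigl(|f(x)|\bigr)W(x)\,dx,
\end{equation*}
which by \eqref{fac} and the $\Delta_2$ condition yields $\|Sf\|_{\varphi,W}\lesssim\|f\|_{\varphi,W}$, one half of \eqref{L-P}.

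For the reverse inequality I would run a duality argument. Given $f\sim\sum_j A_j(\cdot,f)$, pick $g\in L^{\tilde\varphi}_{W_*}$ with $\|g\|_{\tilde\varphi,W_*}\le1$ nearly attaining $\|f\|_{\varphi,W}$ in the Hölder pairing of Theorem \ref{lbir}; using $\int fg\,W=\sum_l\int\triangledown_l(f)\,\triangledown_l(g)\,W$ (justified by the $L^2_W$-convergence coming from $W\in A_2$, which holds once $r<2<s$, or more carefully by first truncating the series and passing to the limit), apply the pointwise Cauchy--Schwarz inequality in the summation index $l$ to get $\bigl|\int fg\,W\bigr|\le\int S f\cdot S g\,W$, and then Hölder again: $\le c\|Sf\|_{\varphi,W}\|Sg\|_{\tilde\varphi,W_*}$. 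Since $\tilde\varphi$ also lies in the class covered by the first half (its complementary function is $\varphi$, and $W_*\in A_{(p(\varphi))'}=A_{p(\tilde\varphi)}$ as recalled in \S2), we have $\|Sg\|_{\tilde\varphi,W_*}\lesssim\|g\|_{\tilde\varphi,W_*}\le1$, whence $\|f\|_{\varphi,W}\lesssim\|Sf\|_{\varphi,W}$. Combining the two directions gives \eqref{L-P}.

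The main obstacle I anticipate is the bookkeeping around the complementary function: to close the duality step I must know that the upper Littlewood--Paley estimate applies not just to $\varphi$ but to $\tilde\varphi$ with the conjugate weight $W_*$, which requires checking that $\tilde\varphi$ (or an equivalent convex minorant) again satisfies the hypotheses $\tilde\varphi\in\Delta_2$, $\tilde\varphi$ quasiconvex, $\tilde\varphi(t^{1/q_0})$ convex for some $q_0>1$ — this is where the assumption that $\varphi(t^{1/p_0})$ is convex is genuinely used, since it forces $\varphi$ to have an upper index strictly less than $\infty$ and hence $\tilde\varphi$ to be nondegenerate and $\Delta_2$. A secondary technical point is the legitimacy of interchanging the infinite sum with the integral in the Parseval-type identity; I would handle this by working first with the finite partial sums $S_N f=\sum_{l\le N}\triangledown_l(f)$, noting $S_Nf\to f$ in $L^\varphi_W$ by the boundedness of $S_n$ in \eqref{esen} and density of polynomials, and then letting $N\to\infty$. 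Everything else — the choice of $r,s$, the verification of the $\Delta_2$/quasiconvexity chain of implications, and the applications of \eqref{fac} — is routine.
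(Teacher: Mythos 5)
Your upper estimate, interpolating Kurtz's weighted $L_{W}^{r}$ and $L_{W}^{s}$ square-function bounds through Theorem \ref{l1} (with $r<p\left( \varphi \right) \leq \left( p\left( \tilde{\varphi}\right) \right) ^{\prime }<s$, $W\in A_{r}$ by openness of the Muckenhoupt classes), is a legitimate alternative route: it mirrors the paper's own proof of Theorem \ref{mmt}, and the passage from the modular inequality to the norm inequality is the standard quasiconvexity scaling already used there. The paper instead obtains this half by feeding Kurtz's theorem, stated for every $A_{1}$ weight at the single exponent $p_{0}$, into the extrapolation Theorem \ref{ET} with $F:=\bigl( \sum_{l}\left\vert \triangledown _{l}\right\vert ^{2}\bigr) ^{1/2}$ and $g:=\left\vert f\right\vert $.

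The reverse inequality, however, has a genuine gap. The identity $\int fg\,W\,dx=\sum_{l}\int \triangledown _{l}\left( f\right) \triangledown _{l}\left( g\right) W\,dx$ on which your duality argument rests is false in general: the dyadic blocks are orthogonal with respect to $dx$, not with respect to $W\,dx$, so the cross terms $\int \triangledown _{l}\left( f\right) \triangledown _{l^{\prime }}\left( g\right) W\,dx$, $l\neq l^{\prime }$, need not vanish; the appeal to $L_{W}^{2}$-convergence (which anyway requires $W\in A_{2}$ and is not guaranteed when $p\left( \varphi \right) >2$) only addresses convergence, not orthogonality. The step could be repaired by switching to the unweighted pairing through the norm conjugate formula used in the proof of Theorem \ref{traX}, for which Parseval does hold and \eqref{Hld} applies, but then you must establish the upper Littlewood--Paley bound for the dual pair $\left( \tilde{\varphi},W_{\ast }\right) $, i.e.\ verify that $\tilde{\varphi}$ again satisfies the structural hypotheses and that $W_{\ast }\in A_{p\left( \tilde{\varphi}\right) }$ (your identification $A_{\left( p\left( \varphi \right) \right) ^{\prime }}=A_{p\left( \tilde{\varphi}\right) }$ is asserted, not proved) --- precisely the point you flag as the ``main obstacle'' and leave open. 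So as written the lower bound is not proved. Note that the paper avoids duality altogether: since Kurtz's theorem is two-sided, the reverse inequality follows from the same extrapolation Theorem \ref{ET} with the roles of $F$ and $g$ interchanged, and this is exactly where the hypothesis that $\varphi \left( t^{1/p_{0}}\right) $ is convex enters.
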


In case $W\equiv 1$, $\varphi \left( x\right) =x^{p}$, $1<p<\infty $, (\ref%
{L-P}) was obtained by Littlewood and Paley in \cite{lp31}. For $W\in A_{p}$%
, $\varphi \left( x\right) =x^{p}$, $1<p<\infty $, (\ref{L-P}) was obtained
by Kurtz in \cite{k80} and Berkson-Gillespie \cite[Theorem 4.5, p.957]{bg03}.

\begin{corollary}
\label{corlll}Under the conditions of Theorems \ref{proc} and \ref{teo2} if $%
k,l\in \mathbb{R}^{+}$, $k<l$ and $0<t\leq 1/2$, then there exists\emph{\ }%
positive constant $C$ depending only on $l$, $k$ and $\varphi ,W$ such that%
\begin{equation*}
t^{2k}\left\{ \int\limits_{t}^{1}\left[ \frac{\Omega _{l}\left( f,u\right)
_{\varphi ,W}}{u^{2k}}\right] ^{\beta }\frac{du}{u}\right\} ^{1/\beta
}\lesssim \Omega _{k}\left( f,t\right) _{\varphi ,W}
\end{equation*}%
hold.
\end{corollary}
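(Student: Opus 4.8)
The plan is to derive the Marchaud-type inequality from the refined Jackson inequality \eqref{impj} of Theorem \ref{proc}. First I would fix $k<l$ in $\mathbb{R}^{+}$ and $0<t\le 1/2$, and choose $n=n(t)\in\mathbb{N}$ so that $1/(n+1)<t\le 1/n$; with this choice $t\approx 1/n$ and $t^{2k}\approx n^{-2k}$, so it suffices to bound $n^{-2k}\bigl\{\int_t^1[\Omega_l(f,u)_{\varphi,W}/u^{2k}]^{\beta}\,du/u\bigr\}^{1/\beta}$ by $\Omega_k(f,1/n)_{\varphi,W}$. The standard move is to discretize the integral: split $[t,1]\subset[1/(n+1),1]$ into the dyadic-type pieces $[1/(\nu+1),1/\nu]$ for $\nu=1,\dots,n$, and on each such piece use monotonicity of $u\mapsto\Omega_l(f,u)_{\varphi,W}$ (nondecreasing) together with $\int_{1/(\nu+1)}^{1/\nu}du/u\approx 1/\nu$ to get
\begin{equation*}
\int_t^1\left[\frac{\Omega_l(f,u)_{\varphi,W}}{u^{2k}}\right]^{\beta}\frac{du}{u}\lesssim\sum_{\nu=1}^{n}\nu^{2k\beta-1}\,\Omega_l\!\left(f,\tfrac1\nu\right)_{\varphi,W}^{\beta}.
\end{equation*}

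Next I would control $\Omega_l(f,1/\nu)_{\varphi,W}$ by the best approximation numbers. Since $l>0$, by Lemma \ref{l2xx} (boundedness of $(I-\mathcal A_h)^l$) and the fact that $(I-\mathcal A_h)^l T=0$ would not hold here, I instead use the elementary estimate coming from the realization/Jackson circle of ideas: for any trigonometric polynomial $T_{\nu-1}\in\mathcal T_{\nu-1}$ of near-best approximation, $\Omega_l(f,1/\nu)_{\varphi,W}\le\Omega_l(f-T_{\nu-1},1/\nu)_{\varphi,W}+\Omega_l(T_{\nu-1},1/\nu)_{\varphi,W}\lesssim\|f-T_{\nu-1}\|_{\varphi,W}+\Omega_l(T_{\nu-1},1/\nu)_{\varphi,W}$, and the second term is itself $\lesssim\|f-T_{\nu-1}\|_{\varphi,W}\lesssim E_{\nu-1}(f)_{\varphi,W}$ by the standard estimate that the modulus of smoothness of a polynomial in $\mathcal T_{\nu-1}$ evaluated at scale $1/\nu$ is comparable to the polynomial itself only up to the approximation error — more carefully, one uses $\Omega_l(T_{\nu-1},1/\nu)_{\varphi,W}\lesssim\nu^{-2l}\| (T_{\nu-1})^{(2l)}\|_{\varphi,W}$ (a Jackson-type bound for polynomials, available from the realization equivalence \eqref{real}) combined with a Bernstein inequality $\| (T_{\nu-1})^{(2l)}\|_{\varphi,W}\lesssim\nu^{2l}E_{?}$, which telescopes. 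Cleanly, the upshot is $\Omega_l(f,1/\nu)_{\varphi,W}\lesssim E_{\nu-1}(f)_{\varphi,W}$ — this is just Theorem \ref{teo2} applied with $l$ in place of $k$ together with monotonicity, since $E_{\nu-1}(f)_{\varphi,W}\le\Omega_l(f,1/(\nu-1))_{\varphi,W}$ runs the wrong way; the correct statement I need is the reverse, $\Omega_l(f,1/\nu)_{\varphi,W}\lesssim E_{[\nu/2]}(f)_{\varphi,W}+\cdots$, obtained from $(I-\mathcal A_h)^l S_n f$ estimates. Substituting whichever clean bound holds, I get
\begin{equation*}
\sum_{\nu=1}^{n}\nu^{2k\beta-1}\,\Omega_l\!\left(f,\tfrac1\nu\right)_{\varphi,W}^{\beta}\lesssim\sum_{\nu=1}^{n}\nu^{2k\beta-1}E_{\nu-1}^{\beta}(f)_{\varphi,W}.
\end{equation*}

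Finally I would recognize the right-hand side: since $k<l$ means $2k\beta-1<2l\beta-1$ is irrelevant, what matters is that $\sum_{\nu=1}^n\nu^{2k\beta-1}E_{\nu-1}^{\beta}(f)_{\varphi,W}\le\sum_{\nu=1}^n\nu^{2k\beta-1}E_{\nu-1}^{\beta}(f)_{\varphi,W}$ is exactly the quantity $\bigl(n^{2k}\cdot[\text{LHS of \eqref{impj} with }k]\bigr)^{\beta}$, so by Theorem \ref{proc} it is $\lesssim n^{2k\beta}\,\Omega_k(f,1/n)_{\varphi,W}^{\beta}$. Taking $\beta$-th roots and multiplying by $t^{2k}\approx n^{-2k}$ gives the claimed inequality, with the constant depending only on $k$, $l$, $\varphi$ and $W$. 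The main obstacle I anticipate is the middle step: pinning down the correct direction and constant in the estimate $\Omega_l(f,1/\nu)_{\varphi,W}\lesssim E_{\nu-1}(f)_{\varphi,W}$ (equivalently, that the modulus of smoothness at scale $1/\nu$ of a best-approximating polynomial of degree $\nu-1$ does not exceed a constant times the approximation error), since this requires combining the boundedness of $(I-\mathcal A_h)^l$ from Lemma \ref{l2xx} with a Bernstein-type inequality in $L_W^{\varphi}$ and a careful telescoping over dyadic blocks — the integral discretization and the final appeal to Theorem \ref{proc} are routine once that estimate is in hand.
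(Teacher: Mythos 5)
Your outer structure (reduce to $t\approx 1/n$, discretize the integral over $[1/(\nu+1),1/\nu]$ using monotonicity of $\Omega_l(f,\cdot)_{\varphi,W}$, and finish by invoking Theorem \ref{proc}) is the right frame, but the middle step on which everything hinges is false, and you yourself never resolve it. There is no inequality of the form $\Omega_l\left(f,1/\nu\right)_{\varphi,W}\lesssim E_{\nu-1}(f)_{\varphi,W}$, nor $\lesssim E_{[\nu/2]}(f)_{\varphi,W}$, nor any bound by a single best-approximation term: take $f=\cos x$ and $\nu$ large, then $E_{\nu-1}(f)_{\varphi,W}=0$ while $\Omega_l(f,1/\nu)_{\varphi,W}=\sup_{0<h\le 1/\nu}\left(1-\frac{\sin(h/2)}{h/2}\right)^{l}\left\Vert \cos\right\Vert_{\varphi,W}>0$. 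The modulus of smoothness controls the best approximation (Jackson, Theorem \ref{teo2}), never the other way around by a single term; the only valid upper bound for $\Omega_l(f,1/\nu)_{\varphi,W}$ in terms of approximation numbers is the Bernstein-type inverse estimate $\Omega_l\left(f,1/\nu\right)_{\varphi,W}\lesssim \nu^{-2l}\sum_{\mu=1}^{\nu}\mu^{2l-1}E_{\mu-1}(f)_{\varphi,W}$, which one obtains from the realization (\ref{real}) (equivalently Lemmas \ref{lem2}--\ref{lem3}) together with a Bernstein inequality for $\left\Vert T^{(2l)}\right\Vert_{\varphi,W}$ and the usual dyadic telescoping. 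Your sketch gestures at these ingredients but then "substitutes whichever clean bound holds," which is exactly the point that cannot be waved away.

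Once the inverse estimate is in place, a second ingredient you omit is needed: after substituting it into $\sum_{\nu=1}^{n}\nu^{2k\beta-1}\Omega_l(f,1/\nu)_{\varphi,W}^{\beta}$ you face a double sum $\sum_{\nu=1}^{n}\nu^{(2k-2l)\beta-1}\bigl(\sum_{\mu=1}^{\nu}\mu^{2l-1}E_{\mu-1}(f)_{\varphi,W}\bigr)^{\beta}$, and you must apply a discrete Hardy-type inequality to reduce it to $\sum_{\nu=1}^{n}\nu^{2k\beta-1}E_{\nu-1}^{\beta}(f)_{\varphi,W}$ before Theorem \ref{proc} can be used. This is precisely where the hypothesis $k<l$ enters (it makes the exponent $(2k-2l)\beta-1<-1$, so the Hardy step closes); your remark that $k<l$ is "irrelevant" is a symptom of the gap, since the corollary is simply false without it. So the proposal as written does not prove the statement: it needs the weighted inverse theorem and the Hardy inequality step, not a one-term bound of $\Omega_l$ by $E_{\nu-1}$.
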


\section{Realization}

The following theorem includes the realization result and an equivalence of
weighted fractional order modulus of smoothness (\ref{modul}) and weighted
Peetre's $K$-functional (\ref{kfunct}).

\begin{theorem}
\label{rea}If $\varphi \in Q$, $W\in A_{p\left( \varphi \right) }$, $f\in
L_{W}^{\varphi }$ and $k\in \mathbb{R}^{+}$, then the equivalence%
\begin{equation}
\Omega _{k}\left( f,1/n\right) _{\varphi ,W}\approx R_{2k}\left(
f,1/n,L_{W}^{\varphi }\right)  \label{real}
\end{equation}%
holds for $n\in \mathbb{N}$, where the equivalence constants are depend only
on $k$ and $\varphi ,W$. Furthermore, we have 
\begin{equation}
\Omega _{k}\left( f,\delta \right) _{\varphi ,W}\approx K_{2k}\left(
f,\delta ,L_{W}^{\varphi }\right) \text{,\quad }\delta \geq 0,  \label{Kf}
\end{equation}%
where the equivalence constants are depend only on $k$ and $\varphi ,W$.
\end{theorem}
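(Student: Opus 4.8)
I would route everything through the realization functional, using a pair of Fourier--multiplier estimates comparing $(I-\mathcal{A}_h)^k$ with the Weyl derivative $D^{2k}$, both furnished by the Marcinkiewicz multiplier Theorem \ref{mmt}. Since $\mathcal{A}_h$ multiplies the block $A_\nu(\cdot,f)$ by $\operatorname{sinc}(\nu h/2):=\sin(\nu h/2)/(\nu h/2)$, the operator $(I-\mathcal{A}_h)^k$ is the (real) Fourier multiplier with symbol $\bigl(1-\operatorname{sinc}(\nu h/2)\bigr)^k=|\nu h|^{2k}\,\Psi(\nu h)$, where $\Psi(u):=\bigl(1-\operatorname{sinc}(u/2)\bigr)^k|u|^{-2k}$ extends to an even function in $C^{\infty}[0,\infty)$, is positive, bounded by $24^{-k}$, has $\Psi(0)=24^{-k}$, and satisfies $\Psi(u)=O(|u|^{-2k})$ at infinity. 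Hence for every $h>0$ the sequence $\lambda_\nu=\Psi(\nu h)$ satisfies (\ref{MS}) with a bound independent of $h$ (on a dyadic block the variation is majorised by $\int_{2^{m-1}h}^{2^{m}h}|\Psi'(u)|\,du$, uniformly bounded because of the behaviour of $\Psi$ near $0$ and $\infty$); and, modulo the bounded conjugation operator $B$ from (\ref{esen}), $D^{2k}$ agrees with the pure multiplier of symbol $|\nu|^{2k}$. Theorem \ref{mmt} therefore gives
\begin{equation*}
\bigl\|(I-\mathcal{A}_h)^k g\bigr\|_{\varphi,W}\lesssim h^{2k}\bigl\|g^{(2k)}\bigr\|_{\varphi,W},\qquad g\in W_{\varphi,W}^{2k},\ h>0 .
\end{equation*}
When $T\in\mathcal{T}_n$ and $0<h\le 1/n$, only the range $0<|\nu h|\le 1$ matters, where $1/\Psi$ is itself $C^{\infty}$, bounded and of bounded dyadic variation uniformly in $h$; extending $1/\Psi(\nu h)$ by a suitably chosen bounded sequence for $|\nu h|>1$ (so as to preserve (\ref{MS})) and invoking Theorem \ref{mmt} once more, together with (\ref{esen}), yields
\begin{equation*}
\bigl\|T^{(2k)}\bigr\|_{\varphi,W}\lesssim h^{-2k}\bigl\|(I-\mathcal{A}_h)^k T\bigr\|_{\varphi,W},\qquad T\in\mathcal{T}_n,\ 0<h\le 1/n .
\end{equation*}
The constant Fourier coefficient is annihilated by both operators, so it may be discarded; all implied constants depend only on $k,\varphi,W$.

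To prove (\ref{real}) I split $f=(f-t_n^*(f))+t_n^*(f)$. By sublinearity of $\Omega_k(\cdot,\delta)_{\varphi,W}$ and Lemma \ref{l2xx}, $\Omega_k(f-t_n^*(f),1/n)_{\varphi,W}\lesssim\|f-t_n^*(f)\|_{\varphi,W}$; applying the first displayed estimate to the polynomial $t_n^*(f)$ with $h=1/n$ gives $\Omega_k(t_n^*(f),1/n)_{\varphi,W}\lesssim n^{-2k}\|(t_n^*(f))^{(2k)}\|_{\varphi,W}$. Adding these two bounds yields $\Omega_k(f,1/n)_{\varphi,W}\lesssim R_{2k}(f,1/n,L_W^{\varphi})$. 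For the reverse inequality, Theorem \ref{teo2} gives $\|f-t_n^*(f)\|_{\varphi,W}\lesssim E_n(f)_{\varphi,W}\lesssim\Omega_k(f,1/n)_{\varphi,W}$, while the second displayed estimate with $h=1/n$, $T=t_n^*(f)$, the same split, and Lemma \ref{l2xx} give
\begin{equation*}
n^{-2k}\bigl\|(t_n^*(f))^{(2k)}\bigr\|_{\varphi,W}\lesssim\bigl\|(I-\mathcal{A}_{1/n})^k t_n^*(f)\bigr\|_{\varphi,W}\lesssim\Omega_k(f,1/n)_{\varphi,W}+\|f-t_n^*(f)\|_{\varphi,W},
\end{equation*}
and the last term is absorbed once more by Theorem \ref{teo2}. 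Hence $R_{2k}(f,1/n,L_W^{\varphi})\lesssim\Omega_k(f,1/n)_{\varphi,W}$, which proves (\ref{real}).

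For (\ref{Kf}), take $g\in W_{\varphi,W}^{2k}$ arbitrary: sublinearity, Lemma \ref{l2xx} and the first displayed estimate give, for every $\delta>0$,
\begin{equation*}
\Omega_k(f,\delta)_{\varphi,W}\le\Omega_k(f-g,\delta)_{\varphi,W}+\Omega_k(g,\delta)_{\varphi,W}\lesssim\|f-g\|_{\varphi,W}+\delta^{2k}\bigl\|g^{(2k)}\bigr\|_{\varphi,W},
\end{equation*}
and the infimum over $g$ yields $\Omega_k(f,\delta)_{\varphi,W}\lesssim K_{2k}(f,\delta,L_W^{\varphi})$. For the reverse bound with $0<\delta\le 1$, choose $g=t_n^*(f)$ with $n:=\lceil 1/\delta\rceil$ in (\ref{kfunct}); then $1/n\le\delta\le 2/n$, and using (\ref{real}) and the monotonicity of $\Omega_k$ in its second argument, both $\|f-t_n^*(f)\|_{\varphi,W}\lesssim E_n(f)_{\varphi,W}$ and $\delta^{2k}\|(t_n^*(f))^{(2k)}\|_{\varphi,W}\lesssim n^{-2k}\|(t_n^*(f))^{(2k)}\|_{\varphi,W}\lesssim R_{2k}(f,1/n,L_W^{\varphi})$ are $\lesssim\Omega_k(f,1/n)_{\varphi,W}\le\Omega_k(f,\delta)_{\varphi,W}$, so $K_{2k}(f,\delta,L_W^{\varphi})\lesssim\Omega_k(f,\delta)_{\varphi,W}$. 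This proves (\ref{Kf}) for $0<\delta\le 1$; the case $\delta=0$ is trivial since both sides vanish, and the range $\delta>1$ follows from the case $\delta=1$ by monotonicity together with the bound $K_{2k}(f,\delta,L_W^{\varphi})\lesssim E_0(f)_{\varphi,W}\lesssim\Omega_k(f,1)_{\varphi,W}$ (obtained by taking $g$ the best constant in (\ref{kfunct}) and invoking Theorem \ref{teo2} with the boundedness of $S_1$).

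I expect the real work to be the uniform-in-$h$ (and in $n$) verification of the Marcinkiewicz conditions (\ref{MS}) for $\lambda_\nu=\Psi(\nu h)$ and, on $\mathcal{T}_n$, for $1/\Psi(\nu h)$: one must estimate $\Psi$ and $\Psi'$ for the fractional exponent $k\in\mathbb{R}^{+}$ and check that the dyadic-block variations stay bounded independently of $h$ and $n$, using the smoothness of $\Psi$ on $[0,\infty)$, the bound $\Psi\le 24^{-k}$, and the decay $\Psi(u)=O(|u|^{-2k})$. Everything else is an assembly of Theorem \ref{teo2}, Lemma \ref{l2xx}, Theorem \ref{mmt} and the boundedness of the conjugation operator in (\ref{esen}).
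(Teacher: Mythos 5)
Your proposal is correct and follows essentially the same route as the paper: the paper also obtains (\ref{real}) by splitting off a near-best polynomial $t_n^*(f)$ and combining Theorem \ref{teo2}, sublinearity and Lemma \ref{l2xx} with the two polynomial inequalities $\Omega_k(T_n,1/n)_{\varphi,W}\lesssim n^{-2k}\|T_n^{(2k)}\|_{\varphi,W}$ and $n^{-2k}\|T_n^{(2k)}\|_{\varphi,W}\lesssim\Omega_k(T_n,1/n)_{\varphi,W}$ (its Lemmas \ref{lem2} and \ref{lem3}), which are proved exactly by your multiplier argument --- the symbol $(1-\frac{\sin jt/2}{jt/2})^k/(jt/2)^{2k}$ and its reciprocal, Theorem \ref{mmt}, and the conjugate-operator trick from (\ref{esen}) --- and it then gets (\ref{Kf}) from the realization plus the bound $\Omega_k(g,\delta)_{\varphi,W}\lesssim\delta^{2k}\|g^{(2k)}\|_{\varphi,W}$ (Lemma \ref{kubu}). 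The only small deviation is that you prove this last bound for general $g\in W^{2k}_{\varphi,W}$ directly by a uniform-in-$h$ Marcinkiewicz estimate, whereas the paper's Lemma \ref{kubu} deduces it from the polynomial case together with (\ref{so}) and Theorem \ref{teo2}; both are viable, and your handling of $\delta=0$ and $\delta>1$ is in fact more explicit than the paper's.
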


Equivalence (\ref{Kf}) was proved first in \cite{Gadj} for $k=1$, $%
1<p<\infty $, $W\in A_{p}$ and $\varphi \left( x\right) =x^{p}$. In the case 
$k=1$ (\ref{Kf}) was obtained in \cite{ai11,akg12}. Under the conditions $%
k=1 $ and convex $\varphi $, (\ref{JT}) was obtained in \cite{ig06}.

Equivalence (\ref{Kf}) implies the following result.

\begin{corollary}
\label{co}If $\varphi \in Q$, $W\in A_{p\left( \varphi \right) }$, $f\in
L_{W}^{\varphi }$, $k\in \mathbb{R}^{+}$, then 
\begin{equation*}
\Omega _{k}\left( f,\lambda \delta \right) _{\varphi ,W}\lesssim \left(
1+\lfloor \lambda \rfloor \right) ^{2k}\Omega _{k}\left( f,\delta \right)
_{\varphi ,W},\quad \delta ,\lambda \in \mathbb{R}^{+},
\end{equation*}%
and%
\begin{equation*}
\Omega _{k}\left( f,\delta \right) _{\varphi ,W}\delta ^{-2k}\lesssim \Omega
_{k}\left( f,\delta _{1}\right) _{\varphi ,W}\delta _{1}^{-2k},\quad
0<\delta _{1}\leq \delta ,
\end{equation*}%
where $\lfloor x\rfloor :=\sup \left\{ a\in \mathbb{N\cup }\left\{ 0\right\}
:a\leq x\right\} .$
\end{corollary}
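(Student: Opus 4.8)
The plan is to transfer both inequalities to the weighted Peetre $K$-functional by means of the equivalence (\ref{Kf}), verify the corresponding elementary monotonicity properties of $K_{2k}\left( f,\cdot ,L_{W}^{\varphi }\right) $, and then transfer back. Since (\ref{Kf}) holds for every $\delta \geq 0$ with equivalence constants depending only on $k,\varphi ,W$, and since the parameters $\delta ,\lambda ,\delta _{1}$ in the statement are strictly positive, it suffices to prove, with $l:=2k$ and for all $0<s\leq t$, the two estimates
\begin{equation*}
K_{l}\left( f,t,L_{W}^{\varphi }\right) \leq \left( t/s\right) ^{l}K_{l}\left( f,s,L_{W}^{\varphi }\right) \quad \text{and}\quad K_{l}\left( f,s,L_{W}^{\varphi }\right) \leq K_{l}\left( f,t,L_{W}^{\varphi }\right) .
\end{equation*}

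Both follow directly from the definition (\ref{kfunct}). For the monotonicity estimate, observe that for each fixed $g\in W_{\varphi ,W}^{l}$ the quantity $\left\Vert f-g\right\Vert _{\varphi ,W}+s^{l}\left\Vert g^{\left( l\right) }\right\Vert _{\varphi ,W}$ is non-decreasing in $s$; hence so is the infimum over $g$. For the scaling estimate, fix $g\in W_{\varphi ,W}^{l}$; since $t\geq s$ we have $\left( t/s\right) ^{l}\geq 1$, so
\begin{equation*}
\left\Vert f-g\right\Vert _{\varphi ,W}+t^{l}\left\Vert g^{\left( l\right) }\right\Vert _{\varphi ,W}=\left\Vert f-g\right\Vert _{\varphi ,W}+\left( t/s\right) ^{l}s^{l}\left\Vert g^{\left( l\right) }\right\Vert _{\varphi ,W}\leq \left( t/s\right) ^{l}\left( \left\Vert f-g\right\Vert _{\varphi ,W}+s^{l}\left\Vert g^{\left( l\right) }\right\Vert _{\varphi ,W}\right) ,
\end{equation*}
and taking the infimum over $g$ yields $K_{l}\left( f,t\right) \leq \left( t/s\right) ^{l}K_{l}\left( f,s\right) $.

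Now the two asserted inequalities follow. For the second one, put $s=\delta _{1}\leq \delta =t$ in the scaling estimate and divide by $\delta ^{2k}$ to get $K_{2k}\left( f,\delta \right) \delta ^{-2k}\leq K_{2k}\left( f,\delta _{1}\right) \delta _{1}^{-2k}$; applying (\ref{Kf}) at $\delta $ and at $\delta _{1}$ replaces each $K_{2k}$ by the corresponding $\Omega _{k}$ at the cost of a constant depending only on $k,\varphi ,W$. For the first one, distinguish two cases. If $\lambda \geq 1$, the scaling estimate with $t=\lambda \delta $, $s=\delta $ gives $K_{2k}\left( f,\lambda \delta \right) \leq \lambda ^{2k}K_{2k}\left( f,\delta \right) $, and since $\lambda \leq 1+\lfloor \lambda \rfloor $ we obtain $K_{2k}\left( f,\lambda \delta \right) \leq \left( 1+\lfloor \lambda \rfloor \right) ^{2k}K_{2k}\left( f,\delta \right) $. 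If $0<\lambda <1$, then $\lambda \delta <\delta $, so monotonicity together with $\lfloor \lambda \rfloor =0$ gives $K_{2k}\left( f,\lambda \delta \right) \leq K_{2k}\left( f,\delta \right) =\left( 1+\lfloor \lambda \rfloor \right) ^{2k}K_{2k}\left( f,\delta \right) $. In either case $K_{2k}\left( f,\lambda \delta \right) \leq \left( 1+\lfloor \lambda \rfloor \right) ^{2k}K_{2k}\left( f,\delta \right) $, and (\ref{Kf}) converts this into the claimed bound for $\Omega _{k}$, the constant absorbing the two equivalence constants.

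No genuine obstacle arises here: every step is an elementary manipulation of the infimum defining $K_{l}$, and the only point requiring slight care is to match the continuous dilation factor $\lambda $ with the integer $\lfloor \lambda \rfloor $ appearing in the statement, which is handled by the trivial bounds $\lambda \leq 1+\lfloor \lambda \rfloor $ (for $\lambda \geq 1$) and $\lfloor \lambda \rfloor =0$ (for $0<\lambda <1$). All constants that occur depend only on $k$, $\varphi $ and $W$, since the single nontrivial input, the equivalence (\ref{Kf}), has this property.
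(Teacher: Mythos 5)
Your proposal is correct and follows essentially the same route as the paper, which states that Corollary \ref{co} is a consequence of the equivalence (\ref{Kf}); you have simply supplied the standard monotonicity and scaling properties of the $K$-functional that make this implication explicit, with all constants depending only on $k$, $\varphi$ and $W$ as required.
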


We take a trigonometric polynomial $T\in \mathcal{T}_{n}$%
\begin{equation*}
T\left( x\right) =\sum\limits_{j=0}^{n}\left( a_{j}\cos jx+b_{j}\sin
jx\right) =\sum\limits_{j=0}^{n}A_{j}\left( x,T\right) ,\text{\quad }%
a_{j}\in \mathbb{R}\text{ }\left( j\in \left\{ 0\right\} \cup \mathbb{N}%
\right)
\end{equation*}%
and we define its conjugate $\widetilde{T}$ by%
\begin{equation*}
\widetilde{T}\left( x\right) =\sum\limits_{j=1}^{n}\left( a_{j}\sin
jx-b_{j}\cos jx\right) =:\sum\limits_{j=1}^{n}A_{j}\left( x,\widetilde{T}%
\right) .
\end{equation*}

\begin{lemma}
\label{lem2}Let $k\in \mathbb{R}^{+}$, $\varphi \in Q$, $W\in A_{p\left(
\varphi \right) }$ and $T_{n}\in \mathcal{T}_{n}$, $n\in \mathbb{N}$. Then%
\begin{equation}
\Omega _{k}\left( T_{n},\frac{1}{n}\right) _{\varphi ,W}\lesssim \frac{1}{%
n^{2k}}\left\Vert T_{n}^{(2k)}\right\Vert _{\varphi ,W}  \label{aha}
\end{equation}%
holds with some constant depending only on $k,\varphi $ and $W$.
\end{lemma}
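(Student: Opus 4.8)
The plan is to reduce everything to the binomial expansion (\ref{bina}) of $(I-\mathcal{A}_h)^k$ and control the tail, so that the modulus of smoothness of a polynomial is governed by how strongly $\mathcal{A}_h$ smooths out $T_n$ when $h\le 1/n$. First I would fix $h\in(0,1/n]$ and write, using (\ref{bina}),
\begin{equation*}
(I-\mathcal{A}_h)^k T_n=\sum_{j=0}^{\infty}\frac{(-1)^j\Gamma(k+1)}{\Gamma(j+1)\Gamma(k-j+1)}(\mathcal{A}_h)^j T_n .
\end{equation*}
The coefficients $\mu_j:=\Gamma(k+1)/(\Gamma(j+1)\Gamma(k-j+1))$ satisfy $|\mu_j|\lesssim j^{-k-1}$ as $j\to\infty$, so for any fixed $N$ the partial tail $\sum_{j>N}|\mu_j|$ is summable; combined with Lemma \ref{l2x} (whose constant in (\ref{ahashI}) is \textbf{independent of }$j$), the tail of the series is controlled by $\bigl(\sum_{j>N}|\mu_j|\bigr)c_{\varphi,W}\|T_n\|_{\varphi,W}$. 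The key elementary fact I would isolate is that on $\mathcal{T}_n$ the operator $I-\mathcal{A}_h$ ``costs" a factor $\lesssim (nh)^2\le 1$: indeed $\mathcal{A}_h$ acts on the Fourier mode $e^{i\nu x}$ as multiplication by $\widehat{\mathcal{A}_h}(\nu)=\tfrac{2}{\nu h}\sin(\nu h/2)=\tfrac{\sin(\nu h/2)}{\nu h/2}$, so $1-\widehat{\mathcal{A}_h}(\nu)=\tfrac{(\nu h)^2}{24}+O((\nu h)^4)$ and $|1-\widehat{\mathcal{A}_h}(\nu)|\le (\nu h)^2/12\le (nh)^2/12$ for $|\nu|\le n$.

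Next I would make this precise at the level of norms rather than multipliers, since we are in $L^{\varphi}_W$ and must avoid pointwise Fourier estimates. The clean route is to introduce the second difference--type operator and write, for $T_n\in\mathcal{T}_n$,
\begin{equation*}
(I-\mathcal{A}_h)T_n=-\frac{h^2}{2}\,U_h\bigl(T_n^{(2)}\bigr),
\qquad U_h g:=\frac{1}{h}\int_{0}^{1}\!\!\int_{-sh/2}^{sh/2} g(\cdot+t)\,dt\,ds ,
\end{equation*}
which follows from integrating the Taylor/Newton–Leibniz identity $T_n(\cdot+t)-T_n(\cdot)=\int_0^t T_n'(\cdot+u)\,du$ twice and using $\int_{-h/2}^{h/2}t\,dt=0$; the operator $U_h$ is an average of translations, hence $\|U_h g\|_{\varphi,W}\lesssim \|g\|_{\varphi,W}$ with a constant independent of $h$ by the same maximal-function bound behind Lemma \ref{l2}. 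Iterating, for integer part purposes write $k=\lceil k\rceil$ terms: applying $(I-\mathcal{A}_h)$ once to $T_n$ lowers two orders of derivative and gains $h^2$, so $m$ applications give $\|(I-\mathcal{A}_h)^m T_n\|_{\varphi,W}\lesssim h^{2m}\|T_n^{(2m)}\|_{\varphi,W}\lesssim (nh)^{2m}\|T_n\|_{\varphi,W}$ after a Bernstein inequality (which is available since $S_n$ and the conjugation operator are bounded on $L^{\varphi}_W$, cf.\ (\ref{esen})). For general fractional $k$ I would split the series (\ref{bina}) at $N:=\lceil k\rceil$: for $j\le N$ use the integer estimate on $(\mathcal{A}_h)^j=(I-(I-\mathcal{A}_h))^j$ expanded back out, or more directly bound the head by the $m=1$ gain $h^2\|T_n^{(2)}\|_{\varphi,W}$ times $(nh)^{2(k-1)}$ via Bernstein, and for $j>N$ use the summable tail bounded by $C_N h^{2k}\|T_n^{(2k)}\|$—here the cleanest device is to note $(I-\mathcal{A}_h)^k=(I-\mathcal{A}_h)^{k-\lfloor k\rfloor}(I-\mathcal{A}_h)^{\lfloor k\rfloor}$, bound the fractional factor by $\lesssim 1$ using Lemma \ref{l2xx}, and apply the integer estimate to the remaining $\lfloor k\rfloor$ factors, then absorb the leftover $2(k-\lfloor k\rfloor)$ orders by Bernstein again. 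Taking $\sup_{0<h\le 1/n}$ yields $\Omega_k(T_n,1/n)_{\varphi,W}\lesssim n^{-2k}\|T_n^{(2k)}\|_{\varphi,W}$, which is (\ref{aha}).

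The main obstacle is the bookkeeping for non-integer $k$: one cannot literally differentiate $2k$ times inside a Taylor expansion, so the reduction ``$(I-\mathcal{A}_h)$ costs $h^2\cdot D^2$'' must be applied only to the integer part $\lfloor k\rfloor$ of the exponent, and the remaining fractional power $(I-\mathcal{A}_h)^{k-\lfloor k\rfloor}$ handled purely by the uniform boundedness estimate (\ref{AX}) of Lemma \ref{l2xx}, with the missing $(nh)^{2(k-\lfloor k\rfloor)}$ factor recovered from the Bernstein inequality $\|T_n^{(2k)}\|_{\varphi,W}\approx n^{2(k-\lfloor k\rfloor)}\|T_n^{(2\lfloor k\rfloor)}\|_{\varphi,W}$ for $T_n\in\mathcal{T}_n$. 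Everything else—density of trigonometric polynomials, boundedness of $S_n$, $\mathcal{A}_h$, and $(\mathcal{A}_h)^j$ uniformly in $j$—is already in hand from Lemma \ref{l2}, Lemma \ref{l2x}, and (\ref{esen}), so no new hard analysis is required; the content is purely the $h$-quantitative smoothing of $I-\mathcal{A}_h$ on band-limited functions.
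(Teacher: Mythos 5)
There is a genuine gap at the fractional-order step. Your reduction of the integer part is fine: iterating the Taylor identity for $I-\mathcal{A}_h$ (with the averaging operator controlled by the maximal function, as in Lemma \ref{l2}) gives $\Vert (I-\mathcal{A}_h)^{\lfloor k\rfloor}T_n\Vert _{\varphi ,W}\lesssim h^{2\lfloor k\rfloor}\Vert T_n^{(2\lfloor k\rfloor)}\Vert _{\varphi ,W}$, and the remaining factor $(I-\mathcal{A}_h)^{k-\lfloor k\rfloor}$ is bounded by Lemma \ref{l2xx}. But this only yields $\Omega _k(T_n,1/n)_{\varphi ,W}\lesssim n^{-2\lfloor k\rfloor}\Vert T_n^{(2\lfloor k\rfloor)}\Vert _{\varphi ,W}$, and the device you invoke to upgrade it --- the ``Bernstein equivalence'' $\Vert T_n^{(2k)}\Vert _{\varphi ,W}\approx n^{2(k-\lfloor k\rfloor)}\Vert T_n^{(2\lfloor k\rfloor)}\Vert _{\varphi ,W}$ --- is false. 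Only the direction $\Vert T_n^{(2k)}\Vert \lesssim n^{2(k-\lfloor k\rfloor)}\Vert T_n^{(2\lfloor k\rfloor)}\Vert$ is a Bernstein inequality; the reverse fails, e.g.\ for $T_n(x)=\cos x$ regarded as an element of $\mathcal{T}_n$ for large $n$, where $\Vert T_n^{(2\lfloor k\rfloor)}\Vert _{\varphi ,W}$ and $\Vert T_n^{(2k)}\Vert _{\varphi ,W}$ are both constants independent of $n$, so $\Vert T_n^{(2\lfloor k\rfloor)}\Vert \lesssim n^{-2(k-\lfloor k\rfloor)}\Vert T_n^{(2k)}\Vert$ cannot hold uniformly when $k\notin \mathbb{N}$. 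For the same example your bound is of order $n^{-2\lfloor k\rfloor}$ while the target $n^{-2k}\Vert T_n^{(2k)}\Vert _{\varphi ,W}$ is of order $n^{-2k}$, strictly smaller; hence the fractional factor $(I-\mathcal{A}_h)^{k-\lfloor k\rfloor}$ must itself produce a gain of order $(nh)^{2(k-\lfloor k\rfloor)}$ on $\mathcal{T}_n$ and cannot be discarded as an $O(1)$ operator. (A secondary point: a fractional Bernstein inequality in $L_{W}^{\varphi }$ does not follow merely from (\ref{esen}); it needs multiplier machinery of its own.)

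The paper circumvents this by working at fractional order directly on the Fourier side: for $T_n\in \mathcal{T}_n$ and $0<t\le 1/n$ one has $(I-\mathcal{A}_t)^kT_n=\sum_{j=0}^{n}\bigl(1-\frac{\sin (jt/2)}{jt/2}\bigr)^kA_j(\cdot ,T_n)$, since $\mathcal{A}_t$ acts as a scalar on each harmonic and so does any real power of $I-\mathcal{A}_t$; then $(1-\frac{\sin x}{x})\le x^{2}$ together with the Marcinkiewicz multiplier Theorem \ref{mmt} strips off the bounded, dyadically bounded-variation multiplier $\bigl((1-\frac{\sin (jt/2)}{jt/2})/(jt/2)^{2}\bigr)^{k}$, leaving $n^{-2k}\bigl\Vert \sum_{j=1}^{n}j^{2k}A_j(\cdot ,T_n)\bigr\Vert _{\varphi ,W}$, which is identified with $n^{-2k}\bigl(\Vert T_n^{(2k)}\Vert _{\varphi ,W}+\Vert \widetilde{T_n^{(2k)}}\Vert _{\varphi ,W}\bigr)$ through the phase-shift identity $A_j(\cdot ,T_n)=A_j(\cdot +\frac{k\pi }{j},T_n)\cos k\pi +A_j(\cdot +\frac{k\pi }{j},\widetilde{T_n})\sin k\pi$ and the boundedness of the conjugation operator in (\ref{esen}). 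To repair your argument you would have to replace the step ``the fractional factor is $O(1)$'' by an honest estimate of $(I-\mathcal{A}_h)^{k-\lfloor k\rfloor}$ on band-limited functions of size $(nh)^{2(k-\lfloor k\rfloor)}$, which in the weighted Orlicz setting again reduces to exactly the multiplier argument the paper uses.
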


(\ref{aha}) was proved in \cite{Gadj} for $k\in \mathbb{N}$, $1<p<\infty $, $%
W\in A_{p}$ and $\varphi \left( x\right) =x^{p}$. For $1<p<\infty $, $W\in
A_{p}$ and $\varphi \left( x\right) =x^{p}$ see \cite{Akgeja,akgStP}. In
case $1<p<\infty $, $W\in A_{p}$ and convex $\varphi \left( x\right) $ see
also \cite{ig06}.

\begin{lemma}
\label{lem3}Let $k\in \mathbb{R}^{+}$, $\varphi \in Q$, $W\in A_{p\left(
\varphi \right) }$ and $T_{n}\in \mathcal{T}_{n}$, $n\in \mathbb{N}$. Then%
\begin{equation*}
\frac{1}{n^{2k}}\left\Vert T_{n}^{(2k)}\right\Vert _{\varphi ,W}\lesssim
\Omega _{k}\left( T_{n},\frac{1}{n}\right) _{\varphi ,W}
\end{equation*}%
holds with some constant depending only on $k,\varphi $ and $W$.
\end{lemma}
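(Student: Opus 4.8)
The plan is to prove the Bernstein-type (inverse) inequality $n^{-2k}\Vert T_n^{(2k)}\Vert_{\varphi,W}\lesssim\Omega_k(T_n,1/n)_{\varphi,W}$ by expressing the operator $T_n\mapsto n^{-2k}T_n^{(2k)}$ as a multiplier operator acting on $(I-\mathcal A_{1/n})^kT_n$, and then invoking the Marcinkiewicz multiplier Theorem \ref{mmt}. First I would recall that $\mathcal A_h$ acts on the Fourier block $A_\nu(\cdot,f)$ by multiplication by $\widehat{\mathcal A}_h(\nu):=\frac{2}{\nu h}\sin(\nu h/2)$, so that $(I-\mathcal A_h)^k$ multiplies $A_\nu$ by $\bigl(1-\frac{2}{\nu h}\sin(\nu h/2)\bigr)^k$, while $T_n^{(2k)}$ multiplies $A_\nu$ by a quantity of size $\nu^{2k}$ (more precisely, differentiation of Weyl order $2k$ is itself a bounded multiplier combination once we factor out $\nu^{2k}$; here $2k$ may be non-integer so one works with the Weyl derivative as defined in \S5). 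Setting $h=1/n$, the key is that for $1\le\nu\le n$ one has the two-sided estimate
\begin{equation*}
1-\frac{2n}{\nu}\sin\!\Bigl(\frac{\nu}{2n}\Bigr)\approx \Bigl(\frac{\nu}{n}\Bigr)^{2},
\end{equation*}
which follows from the Taylor expansion $1-\frac{\sin t}{t}\approx t^2$ for $t\in(0,1/2]$. Hence the ratio
\begin{equation*}
\lambda_\nu:=\frac{(\nu/n)^{2k}}{\bigl(1-\tfrac{2n}{\nu}\sin(\nu/2n)\bigr)^{k}}
\end{equation*}
is, for $1\le\nu\le n$, bounded above and below by positive constants depending only on $k$; for $\nu>n$ we simply set $\lambda_\nu$ to be a bounded extension (e.g. $\lambda_\nu:=\lambda_n$, or any monotone completion), which is harmless since $T_n$ has no Fourier coefficients beyond $n$.

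The second step is to verify that $\{\lambda_\nu\}$ satisfies the Marcinkiewicz conditions (\ref{MS}): boundedness $|\lambda_\nu|\le A$ is the two-sided estimate above, and the dyadic variation bound $\sum_{\nu=2^{m-1}}^{2^m-1}|\lambda_\nu-\lambda_{\nu+1}|\le A$ follows because, writing $t=\nu/(2n)$, the function $t\mapsto t^{2k}(1-\frac{\sin t}{t})^{-k}$ is smooth with bounded derivative on $(0,1/2]$, so $|\lambda_\nu-\lambda_{\nu+1}|\lesssim 1/n$ on each dyadic block with at most $2^{m-1}\le n$ terms (and the blocks with $2^{m-1}>n$ contribute nothing). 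Then Theorem \ref{mmt} applied to $g:=(I-\mathcal A_{1/n})^kT_n\in L_W^\varphi$ produces a function $G\in L_W^\varphi$ whose Fourier series is $\sum_\nu\lambda_\nu\widehat{\mathcal A}_{1/n}\text{-factor}\cdot A_\nu(\cdot,g)=n^{-2k}\sum_\nu\nu^{2k}A_\nu(\cdot,T_n)$, i.e. $G=n^{-2k}T_n^{(2k)}$ (up to the harmless conjugation bookkeeping in the Weyl derivative, handled as in (\ref{esen})), with
\begin{equation*}
\bigl\Vert n^{-2k}T_n^{(2k)}\bigr\Vert_{\varphi,W}=\Vert G\Vert_{\varphi,W}\lesssim\Vert g\Vert_{\varphi,W}=\bigl\Vert(I-\mathcal A_{1/n})^kT_n\bigr\Vert_{\varphi,W}\le\Omega_k(T_n,1/n)_{\varphi,W}.
\end{equation*}

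The main obstacle I anticipate is purely technical rather than conceptual: checking the variation condition in (\ref{MS}) uniformly in $n$, since the multiplier $\lambda_\nu$ depends on $n$ through $h=1/n$, one must confirm that the derivative bound on $t\mapsto t^{2k}(1-\sin t/t)^{-k}$ is genuinely independent of $n$ and that the dyadic sum telescopes correctly across the threshold $\nu\approx n$. A secondary point requiring care is the non-integer order $2k$: the operator $T_n\mapsto T_n^{(2k)}$ must be realized as a bounded Fourier multiplier, which one does by combining the two elementary multipliers $\nu\mapsto\nu^{2k}\cos(k\pi)$ and $\nu\mapsto\nu^{2k}\sin(k\pi)$ acting through $S_n$ and the conjugation operator $B$, both bounded on $L_W^\varphi$ by (\ref{esen}); alternatively one absorbs the $\nu^{2k}$ growth directly into the definition of $\lambda_\nu$ as above and lets Theorem \ref{mmt} do all the work, which is the route I would follow to keep the argument short. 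Once the multiplier estimates are in place, the conclusion is immediate.
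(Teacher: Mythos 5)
Your proposal is correct and takes essentially the same route as the paper's proof: both realize $n^{-2k}T_n^{(2k)}$ as a Marcinkiewicz multiplier with $\lambda_j\approx\bigl((j/2n)^{2}/(1-\tfrac{\sin (j/2n)}{j/2n})\bigr)^{k}$ applied to $(I-\mathcal{A}_{1/n})^{k}T_n$, invoke Theorem \ref{mmt}, and dispose of the Weyl phase shift for non-integer $2k$ via the conjugate polynomial and the boundedness of the conjugation operator in (\ref{esen}). Your explicit check of the variation condition (\ref{MS}) is a detail the paper leaves implicit, but the argument is the same.
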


\begin{lemma}
\label{kubu} Let $\varphi \in Q$, $W\in A_{p\left( \varphi \right) }$, $k\in 
\mathbb{R}^{+}$ and $f\in L_{\varphi ,W}^{2k}$. Then for any $0<t<\infty $,
the following inequality is hold%
\begin{equation*}
\Omega _{k}(f,t)_{\varphi ,W}\lesssim t^{2k}\Vert f^{\left( 2k\right) }\Vert
_{\varphi ,W}
\end{equation*}%
with some constant depending only on $k,\varphi $ and $W$.
\end{lemma}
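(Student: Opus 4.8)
The plan is to reduce the estimate for a general $f \in L_{\varphi,W}^{2k}$ to the polynomial estimate already available in Lemma \ref{lem2}, by approximating $f$ with its partial Fourier sums and using that the Weyl derivative commutes nicely with the partial sum operator. First I would fix $0<t<\infty$ and choose $n \in \mathbb{N}$ with $1/(n+1) < t \leq 1/n$ (the case of large $t$, say $t \geq 1$, being trivial since then $\Omega_k(f,t)_{\varphi,W} \lesssim \|f\|_{\varphi,W} \lesssim t^{2k}\|f^{(2k)}\|_{\varphi,W}$ whenever $\|f^{(2k)}\|_{\varphi,W}$ controls $\|f\|_{\varphi,W}$; if not, one argues directly via monotonicity of $\Omega_k$ in $t$ after handling $t \leq 1$). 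Write $f = S_n(f) + (f - S_n(f))$ and split
\begin{equation*}
\Omega_k(f,t)_{\varphi,W} \leq \Omega_k(S_n(f),t)_{\varphi,W} + \Omega_k(f - S_n(f),t)_{\varphi,W}.
\end{equation*}

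For the first term, $S_n(f) \in \mathcal{T}_n$, so Lemma \ref{lem2} gives $\Omega_k(S_n(f),1/n)_{\varphi,W} \lesssim n^{-2k}\|(S_n(f))^{(2k)}\|_{\varphi,W}$, and since $t \leq 1/n$ the monotonicity of $\Omega_k(g,\cdot)_{\varphi,W}$ (which follows from its definition as a supremum over $0<h\leq\delta$) yields $\Omega_k(S_n(f),t)_{\varphi,W} \leq \Omega_k(S_n(f),1/n)_{\varphi,W}$. The key point is that $(S_n(f))^{(2k)} = S_n(f^{(2k)})$ as a Fourier series identity — the Weyl derivative of degree $2k$ multiplies the $j$-th Fourier block by $j^{2k}$ with an angle shift, and truncating at level $n$ commutes with this multiplier — so by the norm boundedness of $S_n$ in $L_W^\varphi$ from \eqref{esen} we get $\|(S_n(f))^{(2k)}\|_{\varphi,W} = \|S_n(f^{(2k)})\|_{\varphi,W} \lesssim \|f^{(2k)}\|_{\varphi,W}$. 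Combining, $\Omega_k(S_n(f),t)_{\varphi,W} \lesssim n^{-2k}\|f^{(2k)}\|_{\varphi,W} \approx t^{2k}\|f^{(2k)}\|_{\varphi,W}$, using $1/(n+1) < t \leq 1/n$.

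For the second term I would use Lemma \ref{l2xx} (the boundedness $\|(I-\mathcal{A}_h)^k g\|_{\varphi,W} \lesssim \|g\|_{\varphi,W}$ uniformly in $h$) to get $\Omega_k(f - S_n(f),t)_{\varphi,W} \lesssim \|f - S_n(f)\|_{\varphi,W}$. Now $\|f - S_n(f)\|_{\varphi,W} \lesssim E_n(f)_{\varphi,W}$ by the estimate displayed after \eqref{esen}, and then by the second-type Jackson inequality \eqref{so} applied with $\alpha = 2k$ (valid since $f \in W_{\varphi,W}^{2k}$), $E_n(f)_{\varphi,W} \lesssim n^{-2k} E_n(f^{(2k)})_{\varphi,W} \lesssim n^{-2k}\|f^{(2k)}\|_{\varphi,W} \approx t^{2k}\|f^{(2k)}\|_{\varphi,W}$. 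Adding the two contributions finishes the proof, with a constant depending only on $k$, $\varphi$, $W$.

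The main obstacle I anticipate is the bookkeeping around the identity $(S_n f)^{(2k)} = S_n(f^{(2k)})$ for \emph{fractional} $2k$: one must confirm that the Weyl derivative of a trigonometric polynomial is again a polynomial of the same degree and that the definition of $f^{(2k)}$ via Fourier series is consistent with termwise differentiation on the finite block $S_n(f)$ — this is where the angle-shift form of the Weyl derivative must be used carefully. A secondary point is the large-$t$ regime: one should verify $\|f\|_{\varphi,W} \lesssim \|f^{(2k)}\|_{\varphi,W}$ is not needed by instead noting that $\Omega_k(f,t)_{\varphi,W}$ is bounded by $\Omega_k(f,1)_{\varphi,W}$ for $t \geq 1$ and that the estimate at $t=1$ already follows from the $n=1$ instance of the argument above, so the inequality self-propagates without an extra Poincaré-type bound.
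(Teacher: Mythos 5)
Your main argument (the range $0<t\le 1$, i.e. whenever an $n$ with $1/(n+1)<t\le 1/n$ exists) is correct and is essentially the paper's own proof: the paper splits $f=(f-T_n)+T_n$ with a near-best polynomial $T_n$, bounds $\Omega_k(f-T_n,t)_{\varphi,W}\lesssim\Vert f-T_n\Vert_{\varphi,W}\lesssim E_n(f)_{\varphi,W}$ and $\Omega_k(T_n,t)_{\varphi,W}\lesssim t^{2k}\Vert T_n^{(2k)}\Vert_{\varphi,W}$ via Lemma \ref{lem2}, and then uses \eqref{so} together with Theorem \ref{teo2} exactly as you do. Your only deviation is to take $S_n(f)$ instead of $T_n$, and this is a harmless, in fact slightly cleaner, variant: the blockwise action of the Weyl derivative, $A_j(\cdot,f^{(2k)})=j^{2k}A_j\left(\cdot+\tfrac{k\pi}{j},f\right)$, shows that truncation commutes with it, so $(S_nf)^{(2k)}=S_n\left(f^{(2k)}\right)$ and \eqref{esen} give $\Vert (S_nf)^{(2k)}\Vert_{\varphi,W}\lesssim\Vert f^{(2k)}\Vert_{\varphi,W}$ explicitly, a reduction the paper leaves implicit for $\Vert T_n^{(2k)}\Vert_{\varphi,W}$. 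The fractional-order commutation you flag as a possible obstacle is thus not an issue.

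The one genuinely wrong point is your remark on the regime $t\ge 1$: since $\Omega_k(f,\delta)_{\varphi,W}$ is a supremum over $0<h\le\delta$, it is nondecreasing in $\delta$, so $\Omega_k(f,t)_{\varphi,W}\ge\Omega_k(f,1)_{\varphi,W}$ for $t\ge 1$ -- the inequality you invoke goes the wrong way -- and the fallback $\Vert f\Vert_{\varphi,W}\lesssim\Vert f^{(2k)}\Vert_{\varphi,W}$ is false (take $f$ constant). A correct treatment of large $t$ would note that $t^{2k}\ge 1$ there, so it suffices to bound $\Vert (I-\mathcal{A}_h)^kf\Vert_{\varphi,W}\lesssim\Vert f^{(2k)}\Vert_{\varphi,W}$ uniformly in $h$: for $h\le 1$ this is the case $t=1$ already proved, while for $h>1$ one can apply the multiplier Theorem \ref{mmt} (as in Lemmas \ref{lem2} and \ref{lem3}) to the multipliers $\bigl(1-\frac{\sin(jh/2)}{jh/2}\bigr)^k j^{-2k}$, which satisfy \eqref{MS} uniformly in $h\ge 1$. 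In fairness, the paper's own proof has the same lacuna: its choice of $n$ with $1/n<t\le 2/n$ does not exist for $t>2$, so this concerns the full range $0<t<\infty$ of the statement rather than the core of your argument, which matches the paper's.
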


\section{Proofs}

\begin{proof}[\textbf{Proof of Theorem \protect\ref{lbir}}]
\textbf{Proof of (\ref{cvf})}: Right embedding of (\ref{cvf}) is well-known.
Left embedding of (\ref{cvf}) is easy to obtain from Young's inequality (\ref%
{Young}) and (\ref{fac}).

We consider mid embedding in (\ref{cvf}). From $\varphi \in Q$, $W\in
A_{p\left( \varphi \right) }$, and $p\left( \varphi \right) >1$, there
exists a $v_{1}\in (1,p\left( \varphi \right) )$ such that $W\in A_{v_{1}}$.
Using definition of $p\left( \varphi \right) $ there exists a $v\in
(v_{1},p\left( \varphi \right) )$ so that $\varphi ^{1/v}$ is quasiconvex.
Hence (\cite[p.4, Lemma 1.1.1]{KK91})%
\begin{equation*}
\frac{\varphi ^{1/v}\left( t\right) }{t}
\end{equation*}%
is quasiincreasing, namely, the inequality%
\begin{equation*}
\frac{\varphi ^{1/v}\left( t\right) }{t}\leq \frac{C\varphi ^{1/v}\left(
Cs\right) }{s}
\end{equation*}%
holds for some $C>1$ where $0\leq t\leq s.$ For $\left\Vert f\right\Vert
_{\varphi ,W}\leq 1$ we get%
\begin{equation*}
\int\limits_{\left\{ \left\vert f\right\vert >1\right\} }\left\vert f\left(
x\right) \right\vert ^{v}W\left( x\right) dx<\frac{C^{2}}{\varphi \left(
1\right) }.
\end{equation*}%
Hence, a scaling argumet gives%
\begin{equation*}
\int\limits_{\mathsf{T}}\left\vert f\left( x\right) \right\vert ^{v}W\left(
x\right) dx\leq \left( \frac{C^{2}}{\varphi \left( 1\right) }+W\left( 
\mathsf{T}\right) \right) \left\Vert f\right\Vert _{\varphi ,W}^{v}
\end{equation*}%
and%
\begin{equation*}
\left\Vert f\right\Vert _{v,W}\leq \left( \frac{C^{2}}{\varphi \left(
1\right) }+W\left( \mathsf{T}\right) \right) ^{1/v}\left\Vert f\right\Vert
_{\varphi ,W}
\end{equation*}%
for $\varphi \in Q$, $W\in A_{p\left( \varphi \right) }$ and $f\in
L_{W}^{\varphi }$. Mid embedding of (\ref{cvf}) is proved.

\textbf{Proof of (\ref{Hld})}: (i) First we obtain for $f\in L_{W}^{\varphi
} $ that%
\begin{equation}
\int\limits_{\mathsf{T}}\left\vert f\left( x\right) \right\vert dx\leq
c\left\Vert f\right\Vert _{\varphi ,W}.  \label{h1}
\end{equation}

Assume $\left\Vert f\right\Vert _{\varphi ,W}\leq 1.$ Then,%
\begin{equation*}
\int\limits_{\mathsf{T}}\left\vert f\left( x\right) \right\vert dx\leq 2\pi
+\left( \int\limits_{\left\{ \left\vert f\right\vert >1\right\} }\left\vert
f\left( x\right) \right\vert ^{v}W\left( x\right) dx\right) ^{\frac{1}{v}%
}\left( \int\limits_{\left\{ \left\vert f\right\vert >1\right\} }W\left(
x\right) ^{-\frac{v^{\prime }}{v}}dx\right) ^{\frac{1}{v^{\prime }}}
\end{equation*}%
\begin{equation*}
\leq 2\pi +\left( \frac{C^{2}}{\varphi \left( 1\right) }\frac{\left( 2\pi
\right) ^{v}}{W\left( \mathsf{T}\right) }\left[ \omega \right] _{p}\right) ^{%
\frac{1}{v}}=:c_{0}.
\end{equation*}%
A scaling argument implies%
\begin{equation*}
\int\limits_{\mathsf{T}}\left\vert f\left( x\right) \right\vert dx\leq
c_{0}\left\Vert f\right\Vert _{\varphi ,W}.
\end{equation*}

(ii) We set $E_{0}$:=$\left\{ x\in \mathsf{T}\text{:}\left\vert f\right\vert
<1\text{ \ }\wedge \text{ \ }\left\vert g\right\vert <1\right\} $, $E_{1}$:=$%
\left\{ x\in \mathsf{T}\text{:}\left\vert f\right\vert <1\text{ \ }\wedge 
\text{ \ }\left\vert g\right\vert >1\right\} $,

$E_{2}$:=$\left\{ x\in \mathsf{T}\text{:}\left\vert f\right\vert >1\text{ \ }%
\wedge \text{ \ }\left\vert g\right\vert <1\right\} $, $E_{3}$:=$\left\{
x\in \mathsf{T}\text{:}\left\vert f\right\vert >1\text{ \ }\wedge \text{ \ }%
\left\vert g\right\vert >1\right\} $. Then%
\begin{equation*}
\int\limits_{\mathsf{T}}\left\vert f\left( x\right) g\left( x\right)
\right\vert dx=\sum_{i=0}^{3}\int\limits_{E_{i}}\left\vert f\left( x\right)
g\left( x\right) \right\vert dx.
\end{equation*}
(a) Clearly $\int\nolimits_{E_{0}}\left\vert f\left( x\right) g\left(
x\right) \right\vert dx\leq 2\pi .$ (b) Note that integral $%
\int\nolimits_{E_{2}}\left\vert f\left( x\right) g\left( x\right)
\right\vert dx$ can be estimated by the same way given in (i) above:%
\begin{equation*}
\int\nolimits_{E_{2}}\left\vert f\left( x\right) \right\vert dx\leq
c_{0}\left\Vert f\right\Vert _{\varphi ,W}
\end{equation*}%
(c) Using $W_{\ast }\in A_{p\left( \tilde{\varphi}\right) }$ we can proceed
as in (i) to obtain%
\begin{equation*}
\int\nolimits_{E_{1}}\left\vert g\left( x\right) \right\vert dx\leq
c_{1}\left\Vert f\right\Vert _{\tilde{\varphi},W_{\ast }}.
\end{equation*}%
(d) Mixing methods given in (b) and (c) we find

$\int\limits_{E_{3}}\left\vert \left( fg\right) \left( x\right) \right\vert
dx\leq \left( \int\limits_{E_{3}}\left\vert f\right\vert ^{p\left( \varphi
\right) }W\left( x\right) dx\right) ^{\frac{1}{p\left( \varphi \right) }%
}\left( \int\limits_{E_{3}}\left\vert g\right\vert ^{(p\left( \varphi
\right) )^{\prime }}W\left( x\right) ^{-\frac{(p\left( \varphi \right)
)^{\prime }}{p\left( \varphi \right) }}dx\right) ^{\frac{1}{(p\left( \varphi
\right) )^{\prime }}}\leq c_{2}$\qquad \qquad since $W\in A_{p\left( \varphi
\right) }$ and $W_{\ast }\in A_{p\left( \tilde{\varphi}\right) }\cap
A_{(p\left( \varphi \right) )^{\prime }}.$ From this we get%
\begin{equation*}
\int\nolimits_{E_{3}}\left\vert f\left( x\right) \right\vert \left\vert
g\left( x\right) \right\vert dx\leq c_{2}\left\Vert f\right\Vert _{\varphi
,W}\left\Vert g\right\Vert _{\tilde{\varphi},W_{\ast }}.
\end{equation*}%
Collecting results above we have%
\begin{equation*}
\int\nolimits_{\mathsf{T}}\left\vert f\left( x\right) \right\vert \left\vert
g\left( x\right) \right\vert dx\leq \mathbb{H}\left\Vert f\right\Vert
_{\varphi ,W}\left\Vert g\right\Vert _{\tilde{\varphi},W_{\ast }}
\end{equation*}%
with $\mathbb{H}$:$\mathbb{=(}2\pi +\sum_{i=0}^{2}c_{i}).$
\end{proof}

\begin{proof}[\textbf{Proof of Theorem \protect\ref{FuX}}]
Proof of Theorem \ref{FuX} is the same with proof of Theorem 1 of \cite%
{akgArx} and therefore omitted.
\end{proof}

\begin{proof}[\textbf{Proof of Theorem \protect\ref{traX}}]
We follow proof of Theorem 10 of \cite{akgArx}. Suppose that $\varphi \in Q$%
, $W\in A_{p\left( \varphi \right) }$ and $0\leq f$,$g\in L_{W}^{\varphi }$.
If $\left\Vert g\right\Vert _{\varphi ,W}=0$, then the result (\ref{rrr}) is
obvious. So, we assume that $\left\Vert g\right\Vert _{\varphi ,W}>0$. Set $%
C_{1}$:=$\left\Vert G\right\Vert _{\infty }$. Using hypotesis we get%
\begin{equation*}
\left\Vert \Xi _{g,G}\right\Vert _{C\left( \mathsf{T}\right) }\leq
C\left\Vert \Xi _{f,G}\right\Vert _{C\left( \mathsf{T}\right) }
\end{equation*}%
\begin{equation*}
=C\left\Vert \int\nolimits_{\mathsf{T}}f\left( x+u\right) \left\vert G\left(
x\right) \right\vert dx\right\Vert _{C\left( \mathsf{T}\right) }=C\max_{u\in 
\mathsf{T}}\int\nolimits_{\mathsf{T}}f\left( x+u\right) \left\vert G\left(
x\right) \right\vert dx
\end{equation*}%
\begin{equation*}
\leq C\max_{u\in \mathsf{T}}\left\Vert f\left( \cdot +u\right) \right\Vert
_{1}\left\Vert G\right\Vert _{\infty }=CC_{1}\left\Vert f\right\Vert
_{1}\leq CC_{1}c_{0}\left\Vert f\right\Vert _{\varphi ,W}.
\end{equation*}

Using norm conjugate formula%
\begin{equation*}
\frac{1}{c_{\ast }}\left\Vert g\right\Vert _{\varphi ,W}=\underset{\underset{%
\left\Vert G\right\Vert _{\tilde{\varphi},W_{\ast }}\leq 1}{G\in L_{W_{\ast
}}^{\tilde{\varphi}}\cap S\left( \mathsf{T}\right) }}{\sup }\int\nolimits_{%
\mathsf{T}}g\left( x\right) \left\vert G\left( x\right) \right\vert dx,
\end{equation*}%
(here constant $c_{\ast }$ comes from inequality%
\begin{equation*}
\overset{\thickapprox }{\varphi }\left( t\right) \leq \varphi \left(
t\right) \leq \overset{\thickapprox }{\varphi }\left( c_{\ast }t\right) 
\text{ )}
\end{equation*}%
for any $\varepsilon >0$ there exists a $G\in L_{W_{\ast }}^{\tilde{\varphi}}
$ with $\left\Vert G\right\Vert _{\tilde{\varphi},W_{\ast }}\leq 1$ such that%
\begin{equation*}
\int\nolimits_{\mathsf{T}}g\left( x\right) \left\vert G\left( x\right)
\right\vert dx\geq \frac{1}{c_{\ast }}\left\Vert g\right\Vert _{\varphi
,W}-\varepsilon 
\end{equation*}%
and hence%
\begin{eqnarray*}
\left\Vert \Xi _{g,G}\right\Vert _{C\left( \mathsf{T}\right) } &\geq
&\left\vert \Xi _{g}\left( 0\right) \right\vert \geq \frac{1}{c_{\ast }}%
\int\nolimits_{\mathsf{T}}g\left( x\right) \left\vert G\left( x\right)
\right\vert dx \\
&\geq &\frac{1}{c_{\ast }}\left\Vert g\right\Vert _{\varphi ,W}-\varepsilon .
\end{eqnarray*}%
Now taking limit $\varepsilon \rightarrow 0+$ we have%
\begin{equation*}
\left\Vert \Xi _{g,G}\right\Vert _{C\left( \mathsf{T}\right) }\geq \frac{1}{%
c_{\ast }}\left\Vert g\right\Vert _{\varphi ,W}
\end{equation*}%
and hence%
\begin{equation*}
\left\Vert g\right\Vert _{\varphi ,W}\leq c_{\ast }\left\Vert \Xi
_{g,G}\right\Vert _{C\left( \mathsf{T}\right) }\leq c_{\ast }C\left\Vert \Xi
_{f}\right\Vert _{C\left( \mathsf{T}\right) }\leq c_{\ast
}CC_{1}c_{0}\left\Vert f\right\Vert _{\varphi ,W}.
\end{equation*}%
For general case $f$,$g\in L_{W}^{\varphi }$we have%
\begin{equation*}
\left\Vert g\right\Vert _{\varphi ,W}\leq 2c_{\ast }CC_{1}c_{0}\left\Vert
f\right\Vert _{\varphi ,W}.
\end{equation*}
\end{proof}

\begin{proof}[\textbf{Proof of Lemma \protect\ref{l2x}}]
Using $\Xi _{\mathcal{A}_{h}f,G}=\mathcal{A}_{h}\Xi _{f,G}$ we get $\Xi
_{\left( \mathcal{A}_{h}\right) ^{i}f,G}=\left( \mathcal{A}_{h}\right)
^{i}\Xi _{f,G}$ and%
\begin{equation*}
\left\Vert \left( \mathcal{A}_{h}\right) ^{i}f\right\Vert _{\varphi ,W}\leq
2c_{\ast }\left\Vert \Xi _{\left( \mathcal{A}_{h}\right) ^{i}f,G}\right\Vert
_{C\left( \mathsf{T}\right) }=2c_{\ast }\left\Vert \left( \mathcal{A}%
_{h}\right) ^{i}\Xi _{f,G}\right\Vert _{C\left( \mathsf{T}\right) }
\end{equation*}%
\begin{equation*}
\leq 2c_{\ast }\left\Vert \Xi _{f,G}\right\Vert _{C\left( \mathsf{T}\right)
}\leq 2c_{\ast }C_{1}c_{0}\left\Vert f\right\Vert _{\varphi ,W}.
\end{equation*}%
Here constant $2c_{\ast }C_{1}c_{0}$ does not depend on $i$.
\end{proof}

\begin{proof}[\textbf{Proof of Lemma \protect\ref{l2xx}}]
From \cite[p.14, (1.51)]{skm} we know that%
\begin{equation*}
\left\vert \frac{\left( -1\right) ^{j}\Gamma \left( k+1\right) }{\Gamma
\left( j+1\right) \Gamma (k-j+1)}\right\vert \lesssim \frac{1}{j^{k+1}}\text{
\ for }j\in \mathbb{N}.
\end{equation*}%
By Lemma \ref{l2x} and (\ref{bina}) we obtain, for $N\in \mathbb{N}$,%
\begin{equation*}
\left\Vert \sum\limits_{j=0}^{N}\frac{\left( -1\right) ^{j}\Gamma \left(
k+1\right) }{\Gamma \left( j+1\right) \Gamma (k-j+1)}\left( \mathcal{A}%
_{h}\right) ^{j}f\right\Vert _{\varphi ,W}\leq \sum\limits_{j=0}^{N}\frac{%
c_{k}}{j^{k+1}}\left\Vert \left( \mathcal{A}_{h}\right) ^{j}f\right\Vert
_{\varphi ,W}
\end{equation*}%
\begin{equation*}
\leq \left\Vert f\right\Vert _{\varphi ,W}C_{\varphi
,W}c_{k}\sum\limits_{j=0}^{N}\frac{1}{j^{k+1}}\leq \left\Vert f\right\Vert
_{\varphi ,W}C_{\varphi ,W}c_{k}\sum\limits_{j=0}^{\infty }\frac{1}{j^{k+1}}%
\leq C_{k,\varphi ,W}\left\Vert f\right\Vert _{\varphi ,W}<\infty .
\end{equation*}%
Using monotone convergence theorem we obtain (\ref{AX}).
\end{proof}

\begin{proof}[\textbf{Proof of Lemma \protect\ref{y}}]
For any $h\in \mathbb{R}^{+}$, we have%
\begin{equation*}
\left( I-\mathcal{A}_{h}\right) ^{k}=\frac{1}{2}\left( I-\mathcal{A}%
_{h}\right) ^{k}+\frac{1}{2}\left( I-\mathcal{A}_{h}\right) ^{k+1}+\frac{1}{2%
}\mathcal{A}_{h}\left( I-\mathcal{A}_{h}\right) ^{k}
\end{equation*}%
\begin{equation*}
\leq \frac{1}{2}\left( I-\mathcal{A}_{h}\right) ^{k+1}+\frac{1}{2}\left( I+%
\mathcal{A}_{h}\right) \left( I-\mathcal{A}_{h}\right) ^{k}
\end{equation*}%
\begin{equation*}
=\frac{1}{2}\left( I-\mathcal{A}_{h}\right) ^{k+1}+\frac{1}{2}\left( I+%
\mathcal{A}_{h}\right) \left( \frac{1}{2}\left( I-\mathcal{A}_{h}\right)
^{k+1}+\frac{1}{2}\left( I+\mathcal{A}_{h}\right) \left( I-\mathcal{A}%
_{h}\right) ^{k}\right)
\end{equation*}%
\begin{equation*}
=\left( \frac{1}{2}+\frac{1}{2^{2}}\left( I+\mathcal{A}_{h}\right) \right)
\left( I-\mathcal{A}_{h}\right) ^{k+1}+\frac{1}{2^{2}}\left( I+\mathcal{A}%
_{h}\right) ^{2}\left( I-\mathcal{A}_{h}\right) ^{k}\cdots
\end{equation*}%
\begin{equation*}
=\sum\nolimits_{j=1}^{k}\frac{1}{2^{j}}\left( I+\mathcal{A}_{h}\right)
^{j-1}\left( I-\mathcal{A}_{h}\right) ^{k+1}+\frac{1}{2^{k}}\left( I-%
\mathcal{A}_{h}^{2}\right) ^{k}.
\end{equation*}%
Hence, if $f\in C\left( \mathsf{T}\right) $, then,%
\begin{equation*}
\Vert \left( I-\mathcal{A}_{h}\right) ^{k}f\Vert _{C\left( \mathsf{T}\right)
}\leq \frac{1}{2^{k}}\Vert \left( I-\mathcal{A}_{h}^{2}\right) ^{k}\Vert
_{C\left( \mathsf{T}\right) }+\left( \sum\nolimits_{j=1}^{k}\frac{1}{2^{j}}%
\right) \Vert \left( I-\mathcal{A}_{h}\right) ^{k+1}\Vert _{C\left( \mathsf{T%
}\right) }
\end{equation*}%
\begin{equation*}
\leq \frac{1}{2^{k}}\Vert \left( I-\mathcal{A}_{h}^{2}\right) ^{k}\Vert
_{C\left( \mathsf{T}\right) }+\Vert \left( I-\mathcal{A}_{h}\right)
^{k+1}\Vert _{C\left( \mathsf{T}\right) }
\end{equation*}%
\begin{equation*}
\leq \frac{1}{2^{2k}}\Vert \left( I-\mathcal{A}_{h}^{4}\right) ^{k}f\Vert
_{\varphi ,W}+2\Vert \left( I-\mathcal{A}_{h}\right) ^{k+1}\Vert _{C\left( 
\mathsf{T}\right) }\cdots
\end{equation*}%
\begin{equation*}
\leq \frac{1}{2^{mk}}\Vert \left( I-\mathcal{A}_{h}^{2^{m}}\right)
^{k}f\Vert _{C\left( \mathsf{T}\right) }+2^{m}\Vert \left( I-\mathcal{A}%
_{h}\right) ^{k+1}\Vert _{C\left( \mathsf{T}\right) }
\end{equation*}%
\begin{equation*}
\leq \frac{2^{k}}{2^{mk}}\Vert f\Vert _{C\left( \mathsf{T}\right)
}+2^{m}\Vert \left( I-\mathcal{A}_{h}\right) ^{k+1}\Vert _{C\left( \mathsf{T}%
\right) }.
\end{equation*}%
Using%
\begin{equation*}
\Xi _{\left( I-\mathcal{A}_{h}\right) ^{k}f,G}=\left( I-\mathcal{A}%
_{h}\right) ^{k}\Xi _{f,G}
\end{equation*}%
and Theorem \ref{traX} we obtain%
\begin{equation*}
\Vert \left( I-\mathcal{A}_{h}\right) ^{k}f\Vert _{\varphi ,W}\leq \frac{%
2^{k}\mathbf{c}_{2}}{2^{km}}\Vert f\Vert _{\varphi ,W}+\mathbf{c}%
_{2}2^{m}\Vert \left( I-\mathcal{A}_{h}\right) ^{k+1}f\Vert _{\varphi ,W}.
\end{equation*}
\end{proof}

\begin{proof}[\textbf{Proof of Theorem \protect\ref{teo2}}]
Let $n\in \mathbb{N}$ and $L_{W}^{\varphi }$ be fixed. In the case $k=1$ we
know from \cite{ai11} that%
\begin{equation}
E_{n}(f)_{\varphi ,W}\lesssim \Omega _{1}\left( f,\frac{1}{n}\right)
_{\varphi ,W}.  \notag
\end{equation}%
The case integer $k\geq 2$: Following the idea given in \cite{d06}, we will
use induction on $k$. We suppose that the inequality (\ref{JT}) holds for $%
g\in L_{W}^{\varphi }$ and some $k=2,3,4,...$:%
\begin{equation}
E_{n}(g)_{\varphi ,W}\lesssim \Omega _{k}\left( g,\frac{1}{n}\right)
_{\varphi ,W}.  \label{ind}
\end{equation}%
We have to verify the fulfilment of inequality (\ref{JT}) for $k+1$. We will
use the operator $S_{n}f$,\ the $n$th partial sum of the Fourier series (\ref%
{fo}) of $f$. We will obtain the estimate%
\begin{equation*}
\Vert f-S_{n}f\Vert _{\varphi ,W}\lesssim \Omega _{k+1}\left( f,\frac{1}{n}%
\right) _{\varphi ,W}.
\end{equation*}%
We set $u(\cdot ):=f(\cdot )-S_{n}f(\cdot )$. Hence we get $S_{n}(u)(\cdot )$%
=$S_{n}(f-S_{n}f)(\cdot )$=$S_{n}(f)(\cdot )-S_{n}\left( S_{n}f\right)
\left( \cdot \right) $=$0$. Since $f\rightarrow S_{n}f$ is uniformly bounded
in $L_{W}^{\varphi }$ (see (\ref{esen})), we have%
\begin{equation*}
\Vert f-S_{n}f\Vert _{\varphi ,W}\lesssim E_{n}(f)_{\varphi ,W},
\end{equation*}%
and using induction hypothesis (\ref{ind})%
\begin{equation*}
\Vert u\Vert _{\varphi ,W}=\Vert u-S_{n}(u)\Vert _{\varphi ,W}\lesssim
E_{n}(u)_{\varphi ,W}\leq \mathbf{C}_{1}\Omega _{k}\left( u,\frac{1}{n}%
\right) _{\varphi ,W}.
\end{equation*}%
We know, from Lemma \ref{y} that%
\begin{equation*}
\Omega _{k}\left( u,\frac{1}{n}\right) _{\varphi ,W}\leq C\delta ^{mk}\Vert
u\Vert _{\varphi ,W}+C^{\prime }\Omega _{k+1}\left( u,\frac{1}{n}\right)
_{\varphi ,W}.
\end{equation*}%
Choosing $m$ so big that $C$\textbf{$C_{1}$}$\delta ^{mk}<1/2$, we get 
\begin{equation*}
\Vert u\Vert _{\varphi ,W}\leq \mathbf{C_{1}}\Omega _{k}\left( u,\frac{1}{n}%
\right) _{\varphi ,W}\leq C\mathbf{C_{1}}\delta ^{mk}\Vert u\Vert _{\varphi
,W}+C\Omega _{k+1}\left( u,\frac{1}{n}\right) _{\varphi ,W}.
\end{equation*}%
Therefore%
\begin{equation*}
\Vert u\Vert _{\varphi ,W}\lesssim \Omega _{k+1}\left( u,\frac{1}{n}\right)
_{\varphi ,W}.
\end{equation*}%
From uniform boundedness of operator $f\longmapsto S_{n}f$ in $%
L_{W}^{\varphi }$ we have 
\begin{equation*}
\Omega _{k+1}\left( u,\frac{1}{n}\right) _{\varphi ,W}\lesssim \Omega
_{k+1}\left( f,\frac{1}{n}\right) _{\varphi ,W}
\end{equation*}%
and the result 
\begin{equation*}
E_{n}\left( f\right) _{\varphi ,W}\lesssim \Vert f-S_{n}f\Vert _{\varphi ,W}%
\text{=}\Vert u\Vert _{\varphi ,W}\lesssim \Omega _{k+1}\left( u,\frac{1}{n}%
\right) _{\varphi ,W}\lesssim \Omega _{k+1}\left( f,\frac{1}{n}\right)
_{\varphi ,W}
\end{equation*}%
holds for any $k\in \mathbb{N}$. The case $k\in \mathbb{R}^{+}\backslash 
\mathbb{N}$ can easily be obtained from the last inequality as, for example,
in \cite{Akgeja}. The proof is completed.
\end{proof}

\begin{definition}
A weight $\rho $ is in class $A_{1}\left( W\right) $ if $\rho \in A_{1}$
with respect to $W\left( x\right) dx.$
\end{definition}

The proof of Lemma \ref{A11} below can be proceed as in the Lemma 4.3 of 
\cite{cmp04}.

\begin{lemma}
\label{A11}If $W\in A_{1}$ and $\omega \in A_{1}\left( W\right) $ then $%
W\omega \in A_{1}.$
\end{lemma}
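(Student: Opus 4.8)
The plan is to verify the $A_{1}$ defining inequality for the product weight $W\omega$ directly, by chaining the two hypotheses on a fixed subinterval $J\subseteq \mathsf{T}$. First I would record the two estimates separately. Since $\omega \in A_{1}\left( W\right) $, i.e.\ $\omega $ is an $A_{1}$ weight with respect to $W\left( x\right) dx$ and $W>0$ a.e.\ (so that essential infima with respect to $W\left( x\right) dx$ and with respect to Lebesgue measure coincide), there is a constant $c_{1}=\left[ \omega \right] _{A_{1}\left( W\right) }$, independent of $J$, with
\[
\frac{1}{W\left( J\right) }\int_{J}\omega \left( x\right) W\left( x\right) dx\leq c_{1}\,\underset{x\in J}{\mathrm{ess\,inf}}\,\omega \left( x\right) ,\qquad W\left( J\right) :=\int_{J}W\left( x\right) dx,
\]
and since $W\in A_{1}$ there is a constant $c_{2}=\left[ W\right] _{1}$ with
\[
\frac{W\left( J\right) }{\left\vert J\right\vert }\leq c_{2}\,\underset{x\in J}{\mathrm{ess\,inf}}\,W\left( x\right) .
\]
Here $0<W\left( J\right) <\infty $ because $W$ is a weight on the bounded set $\mathsf{T}$, so the algebra below is legitimate.

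Next I would simply multiply. Splitting the Lebesgue average of $W\omega $ over $J$ as
\[
\frac{1}{\left\vert J\right\vert }\int_{J}W\left( x\right) \omega \left( x\right) dx=\frac{W\left( J\right) }{\left\vert J\right\vert }\cdot \frac{1}{W\left( J\right) }\int_{J}\omega \left( x\right) W\left( x\right) dx
\]
and inserting the two bounds above gives
\[
\frac{1}{\left\vert J\right\vert }\int_{J}W\left( x\right) \omega \left( x\right) dx\leq c_{1}c_{2}\left( \underset{x\in J}{\mathrm{ess\,inf}}\,W\left( x\right) \right) \left( \underset{x\in J}{\mathrm{ess\,inf}}\,\omega \left( x\right) \right) .
\]

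Finally I would use the elementary fact that for nonnegative functions the product of essential infima is dominated by the essential infimum of the product: for a.e.\ $x\in J$ one has $W\left( x\right) \geq \mathrm{ess\,inf}_{J}W$ and $\omega \left( x\right) \geq \mathrm{ess\,inf}_{J}\omega $, whence $W\left( x\right) \omega \left( x\right) \geq \left( \mathrm{ess\,inf}_{J}W\right) \left( \mathrm{ess\,inf}_{J}\omega \right) $, and therefore $\left( \mathrm{ess\,inf}_{J}W\right) \left( \mathrm{ess\,inf}_{J}\omega \right) \leq \mathrm{ess\,inf}_{J}\left( W\omega \right) $. Combining this with the previous display yields
\[
\frac{1}{\left\vert J\right\vert }\int_{J}W\left( x\right) \omega \left( x\right) dx\leq c_{1}c_{2}\,\underset{x\in J}{\mathrm{ess\,inf}}\,\left( W\left( x\right) \omega \left( x\right) \right) ,
\]
and since $J$ was an arbitrary subinterval of $\mathsf{T}$, this is precisely the statement $W\omega \in A_{1}$, with the quantitative bound $\left[ W\omega \right] _{1}\leq \left[ W\right] _{1}\left[ \omega \right] _{A_{1}\left( W\right) }$. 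I do not expect any genuine obstacle here: the only points requiring a word of care are that $W\left( J\right) $ is finite and strictly positive (so the factorisation is valid) and the essential-infimum-of-a-product inequality just noted; the remainder is the two-line computation above, exactly parallel to Lemma~4.3 of \cite{cmp04}.
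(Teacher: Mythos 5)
Your proof is correct: chaining the $A_{1}\left( W\right)$ condition for $\omega$ (with the Lebesgue and $W\,dx$ essential infima identified, since $W>0$ a.e.) with the $A_{1}$ condition for $W$, and then using $\left( \mathrm{ess\,inf}_{J}W\right) \left( \mathrm{ess\,inf}_{J}\omega \right) \leq \mathrm{ess\,inf}_{J}\left( W\omega \right)$, is exactly the standard argument behind Lemma 4.3 of \cite{cmp04}, which is all the paper invokes for this lemma. So your approach coincides with the paper's, and even yields the explicit bound $\left[ W\omega \right] _{1}\leq \left[ W\right] _{1}\left[ \omega \right] _{A_{1}\left( W\right) }$.
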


\begin{proof}[\textbf{Proof of Theorem \protect\ref{ET}}]
We can give a modification of the proof of Theorem 3.1 in \cite{CGMP}. Let $%
\psi (u):=\varphi \left( u^{1/p_{0}}\right) $ be a convex function, $W\in
A_{p\left( \varphi \right) }$ and%
\begin{equation*}
\int\limits_{\mathsf{T}}\psi \left( F\left( x\right) \right) W\left(
x\right) dx\text{,}\int\limits_{\mathsf{T}}\psi \left( g\left( x\right)
\right) W\left( x\right) dx<\infty .
\end{equation*}%
From $\varphi \in \bigtriangleup _{2}$ we have $\psi \in \bigtriangleup _{2}$
and hence $\tilde{\psi}^{\alpha }$ is quasiconvex for some $\alpha \in
\left( 0,1\right) $ (see Lemma 6.1.6 of \cite{GGKK}). By (ii) of Lemma 1.1.1
of \cite{KK91}, for $0<\theta <1$ and $t\geq 0$%
\begin{equation*}
\tilde{\psi}\left( \theta t\right) =\left( \tilde{\psi}^{\alpha }\left(
\theta t+(1-\theta )0\right) \right) ^{1/\alpha }\leq a_{1}^{1/\alpha
}\theta ^{1/\alpha }\tilde{\psi}\left( a_{1}t\right)
\end{equation*}%
where $a_{1}>1$. On the other hand following the same lines of the proof of
Proposition 5.1 in \cite{CGMP} we get%
\begin{equation*}
\int\limits_{\mathsf{T}}\tilde{\psi}\left( M_{W}F\left( x\right) \right)
W\left( x\right) dx\leq a_{2}\int\limits_{\mathsf{T}}\tilde{\psi}\left(
a_{2}F\left( x\right) \right) W\left( x\right) dx
\end{equation*}%
for some $a_{2}\geq 1$, where%
\begin{equation*}
M_{W}f\left( x\right) :=\sup_{B\ni x}\frac{1}{W\left( B\right) }%
\int\limits_{B}\left\vert f\left( y\right) \right\vert W\left( y\right) dy,%
\text{\quad }f\in L^{1},
\end{equation*}%
is the weighted Hardy-Littlewood maximal operator.

We set $a_{0}:=\max \left\{ a_{1}^{1/\alpha },a_{2}\right\} $. Then $%
a_{0}>1. $ From this%
\begin{eqnarray}
\int\limits_{\mathsf{T}}\tilde{\psi}\left( \frac{M_{W}F\left( x\right) }{%
a_{0}}\right) W\left( x\right) dx &\leq &a_{0}\int\limits_{\mathsf{T}}\tilde{%
\psi}\left( F\left( x\right) \right) W\left( x\right) dx,  \notag \\
\tilde{\psi}\left( \theta t\right) &\leq &a_{0}\theta ^{1/\alpha }\tilde{\psi%
}\left( a_{0}t\right) ,\quad \forall \theta \in \left( 0,1\right) .
\label{alti}
\end{eqnarray}%
By means of the Proposition 6.1.3 of \cite{GGKK}, there exists a number $%
\varepsilon _{0}\in \left( 0,1\right) $ such that%
\begin{equation}
\tilde{\psi}\left( \varepsilon _{0}\frac{\psi \left( t\right) }{t}\right)
\leq \psi \left( t\right) \text{,\quad }\forall t>0.  \label{bak}
\end{equation}

We suppose that%
\begin{equation*}
0\leq h\left( x\right) \text{:=}\frac{\theta \varepsilon _{0}\psi \left(
F^{p_{0}}\left( x\right) \right) }{a_{0}F^{p_{0}}\left( x\right) }\text{ for 
}F\left( x\right) >0,
\end{equation*}%
\begin{equation*}
h\left( x\right) \text{:=}0\text{ for }F\left( x\right) =0,
\end{equation*}%
where $\theta \in \left( 0,1\right) $ a fixed number that will be chosen
appropriately later. We define Rubio de Francia's algorithm as%
\begin{equation*}
Rh\left( x\right) =\frac{2a_{0}-1}{2a_{0}}\sum\limits_{k=0}^{\infty }\frac{1%
}{\left( 2a_{0}\right) ^{k}}\frac{M_{W}^{k}h\left( x\right) }{a_{0}^{k}}.
\end{equation*}%
We can obtain the properties

(\textbf{1}) $h\left( x\right) \leq \frac{2a_{0}}{2a_{0}-1}Rh\left( x\right)
.$

(\textbf{2}) $\int\limits_{\mathsf{T}}\tilde{\psi}\left( Rh\left( x\right)
\right) W\left( x\right) dx\leq \frac{2a_{0}-1}{2a_{0}}\int\limits_{\mathsf{T%
}}\tilde{\psi}\left( h\left( x\right) \right) W\left( x\right) dx.$

(\textbf{3}) $M_{W}Rh\left( x\right) \leq 2a_{0}^{2}Rh\left( x\right) .$

From (\textbf{3}) we get $Rh\in A_{1}\left( W\right) $ with some constant
independent of $f$. By (\textbf{1}) we obtain%
\begin{eqnarray*}
\int\limits_{\mathsf{T}}\varphi \left( F\left( x\right) \right) W\left(
x\right) dx &=&\int\limits_{\mathsf{T}}\psi \left( F^{p_{0}}\left( x\right)
\right) W\left( x\right) dx=\frac{a_{0}}{\theta }\int\limits_{\mathsf{T}%
}F^{p_{0}}\left( x\right) h\left( x\right) W\left( x\right) dx \\
&\leq &\frac{2a_{0}^{2}}{\left( 2a_{0}-1\right) \theta }\int\limits_{\mathsf{%
T}}F^{p_{0}}\left( x\right) Rh\left( x\right) W\left( x\right) dx.
\end{eqnarray*}%
By Young inequality (\ref{Young}) we find%
\begin{equation*}
\int\limits_{\mathsf{T}}F^{p_{0}}\left( x\right) Rh\left( x\right) Wdx\leq
\int\limits_{\mathsf{T}}\psi \left( F^{p_{0}}\left( x\right) \right) Wdx%
\text{+}\int\limits_{\mathsf{T}}\tilde{\psi}\left( Rh\left( x\right) \right)
Wdx\text{=:}I_{1}+I_{2}.
\end{equation*}%
By hypothesis $I_{1}<\infty $. Using $0<\theta <1$, (\ref{alti}), (\textbf{2}%
) and (\ref{bak}) we get%
\begin{eqnarray}
I_{2} &\leq &\frac{2a_{0}-1}{2a_{0}}\int\limits_{\mathsf{T}}\tilde{\psi}%
\left( h\left( x\right) \right) W\left( x\right) dx  \notag \\
&=&\frac{2a_{0}-1}{2a_{0}}\int\limits_{\mathsf{T}}\tilde{\psi}\left( \frac{%
\theta \varepsilon _{0}\psi \left( F^{p_{0}}\left( x\right) \right) }{%
a_{0}F^{p_{0}}\left( x\right) }\right) W\left( x\right) dx  \notag \\
&\leq &\frac{2a_{0}-1}{2a_{0}}a_{0}\theta ^{1/\alpha }\int\limits_{\mathsf{T}%
}\tilde{\psi}\left( \varepsilon _{0}\frac{\psi \left( F^{p_{0}}\left(
x\right) \right) }{F^{p_{0}}\left( x\right) }\right) W\left( x\right) dx 
\notag \\
&<&\left( 2a_{0}-1\right) \theta ^{1/\alpha }\int\limits_{\mathsf{T}}\psi
\left( F^{p_{0}}\left( x\right) \right) W\left( x\right) dx  \notag \\
&=&\left( 2a_{0}-1\right) \theta ^{1/\alpha }\int\limits_{\mathsf{T}}\varphi
\left( F\left( x\right) \right) W\left( x\right) dx<\infty .  \label{bunu}
\end{eqnarray}%
By hypothesis, Lemma \ref{A11} and (\ref{bunu})%
\begin{eqnarray*}
\int\limits_{\mathsf{T}}\varphi \left( F\left( x\right) \right) W\left(
x\right) dx &\leq &\frac{2a_{0}^{2}}{\left( 2a_{0}-1\right) \theta }%
\int\limits_{\mathsf{T}}F^{p_{0}}\left( x\right) Rh\left( x\right) W\left(
x\right) dx \\
&\leq &\frac{2a_{0}^{2}C}{\left( 2a_{0}-1\right) \theta }\int\limits_{%
\mathsf{T}}g^{p_{0}}\left( x\right) Rh\left( x\right) W\left( x\right) dx \\
&\leq &\frac{2a_{0}^{2}C\theta ^{-1}}{\left( 2a_{0}-1\right) }\left[
\int\limits_{\mathsf{T}}\psi \left( g^{p_{0}}\left( x\right) \right) W\left(
x\right) dx\text{+}\int\limits_{\mathsf{T}}\tilde{\psi}\left( Rh\left(
x\right) \right) W\left( x\right) dx\right]  \\
&\leq &\frac{2a_{0}^{2}C}{\left( 2a_{0}-1\right) \theta }\int\limits_{%
\mathsf{T}}\varphi \left( g\left( x\right) \right) W\left( x\right) dx+ \\
&&+2a_{0}^{2}\left( C+1\right) \theta ^{\frac{1-\alpha }{\alpha }%
}\int\limits_{\mathsf{T}}\varphi \left( F\left( x\right) \right) W\left(
x\right) dx.
\end{eqnarray*}%
Choosing $\theta :=\left( 4a_{0}^{2}\left( C+1\right) \right) ^{-\frac{%
\alpha }{1+\alpha }}\in \left( 0,1\right) $ and collecting above results%
\begin{equation*}
\int\limits_{\mathsf{T}}\varphi \left( F\left( x\right) \right) Wdx\leq 
\frac{2a_{0}^{2}C\theta ^{-1}}{\left( 2a_{0}-1\right) }\int\limits_{\mathsf{T%
}}\varphi \left( g\left( x\right) \right) Wdx\text{+}\frac{1}{2}\int\limits_{%
\mathsf{T}}\varphi \left( F\left( x\right) \right) Wdx.
\end{equation*}%
Thus%
\begin{equation*}
\int\limits_{\mathsf{T}}\varphi \left( F\left( x\right) \right) W\left(
x\right) dx\lesssim \int\limits_{\mathsf{T}}\varphi \left( g\left( x\right)
\right) W\left( x\right) dx.
\end{equation*}
\end{proof}

\begin{proof}[\textbf{Proof of Theorem \protect\ref{mmt}}]
The following weighted Marcinkiewicz multiplier theorem was proved in \cite%
{k80}. We will use its one dimensional 2$\pi $-periodic version: Let a
sequence $\left\{ \lambda _{\mu }\right\} $\ of real numbers be satisfy%
\begin{equation*}
\left\vert \lambda _{\mu }\right\vert \leq A\text{,\quad }\sum\limits_{\mu
=2^{m-1}}^{2^{m}-1}\left\vert \lambda _{\mu }-\lambda _{\mu +1}\right\vert
\leq A
\end{equation*}%
for all $\mu ,m\in \mathbb{N}$. If $1<p<\infty $, $W\in A_{p}$\ and $f\in
L_{W}^{p}$\ with the Fourier series (\ref{fo}), then there is a function $%
G\in L_{W}^{p}$\ such that the series $\sum\nolimits_{k=-\infty }^{\infty
}\lambda _{k}A_{k}\left( \cdot ,f\right) $\ is Fourier series for $G$\ and%
\begin{equation}
\left\Vert G\right\Vert _{p,W}\leq C\left\Vert f\right\Vert _{p,W}
\label{MM1}
\end{equation}%
where $C$\ does not depend on $f$.

Using (1) and (3) of Lemma 6.1.6 of \cite[p.215]{GGKK}; (1) and (3) of Lemma
6.1.1 of \cite[p.211]{GGKK} and $\overset{\thickapprox }{\varphi }\leq
\varphi $ we have $\overset{\thickapprox }{\varphi }\sim \varphi $. Then $%
\overset{\thickapprox }{\varphi }\in \bigtriangleup _{2}$ because $\varphi
\in \bigtriangleup _{2}$. On the other hand since $\varphi ^{\theta }$ is
quasiconvex for some $\theta \in \left( 0,1\right) $, by Lemma 6.1.6 of \cite%
[p.215]{GGKK} we have that $\varphi $ is quasiconvex, which implies that $%
\tilde{\varphi}$ is also quasiconvex. This property together with the
relation $\overset{\thickapprox }{\varphi }\in \bigtriangleup _{2}$ is
equivalent to the quasiconvexity of $\tilde{\varphi}^{\beta }$ for some $%
\beta \in \left( 0,1\right) $, by Lemma 6.1.6 of \cite[p.215]{GGKK}.
Therefore by definition we have $\left( p\left( \varphi \right) \right)
^{\prime }$, $\left( p\left( \tilde{\varphi}\right) \right) ^{\prime
}<\infty $. Hence, we can choose the numbers $r,s$ such that $\left( p\left( 
\tilde{\varphi}\right) \right) ^{\prime }<s<\infty $, $r<p\left( \varphi
\right) $ and $W\in A_{r}$. On the other hand, from (\ref{MM1}) the
boundedness of the linear operator%
\begin{equation*}
f\rightarrow Uf\left( \cdot \right) :=\sum\nolimits_{k=0}^{\infty }\lambda
_{k}A_{k}\left( \cdot ,f\right)
\end{equation*}%
hold in $L_{W}^{p}$, in case of $W\in A_{p}$ $\left( 1<p<\infty \right) $,
where the series $\sum\nolimits_{k=0}^{\infty }\lambda _{k}A_{k}\left( \cdot
,f\right) $\ is the Fourier series of $G$. This implies that the operator $U$
is of weak types $\left( r,r\right) $ and $\left( s,s\right) $. Hence,
choosing $Y_{0}=Y_{1}=\mathsf{T}$, $S_{0}=S_{1}=B$ ($B$ is Borel $\sigma $%
-algebra), $d\upsilon _{0}=d\upsilon _{1}=W\left( x\right) dx$ and applying
Theorem \ref{l1} we have%
\begin{equation}
\int\limits_{\mathsf{T}}\varphi \left( \left\vert Uf\left( x\right)
\right\vert \right) W\left( x\right) dx\leq c\int\limits_{\mathsf{T}}\varphi
\left( \left\vert f\left( x\right) \right\vert \right) W\left( x\right) dx.
\label{**}
\end{equation}

Since $\varphi $ is quasiconvex, we have%
\begin{equation*}
\varphi \left( \alpha x\right) \leq \Phi \left( \alpha Cx\right) \leq \alpha
\Phi \left( Cx\right) \leq \alpha \varphi \left( Cx\right) \text{,\quad }%
\alpha \in \left( 0,1\right)
\end{equation*}%
for some convex Young function $\Phi $ and constant $C\geq 1$. Using this
inequality in (\ref{**}) for $f:=f/\lambda $, $\lambda >0$, we obtain the
inequality%
\begin{equation*}
\int\limits_{\mathsf{T}}\varphi \left( \frac{\left\vert U\left( \frac{f}{cC}%
\right) \left( x\right) \right\vert }{\lambda }\right) W\left( x\right)
dx\leq \int\limits_{\mathsf{T}}\varphi \left( \frac{\left\vert f\left(
x\right) \right\vert }{\lambda }\right) W\left( x\right) dx\text{,}
\end{equation*}%
which implies that%
\begin{equation*}
\left\Vert Uf\right\Vert _{\left( \varphi \right) ,W}\lesssim \left\Vert
f\right\Vert _{\left( \varphi \right) ,W}\text{.}
\end{equation*}%
The last relation is equivalent to the inequality%
\begin{equation*}
\left\Vert Uf\right\Vert _{\varphi ,W}\lesssim \left\Vert f\right\Vert
_{\varphi ,W}.
\end{equation*}%
and hence%
\begin{equation*}
\left\Vert G\right\Vert _{\varphi ,W}\lesssim \left\Vert f\right\Vert
_{\varphi ,W}
\end{equation*}%
holds with a positive constant, does not depend on $f$.
\end{proof}

\begin{proof}[\textbf{Proof of Theorem \protect\ref{lpt}}]
The following weighted Littlewood Paley theorem was proved in \cite{k80}. We
will use its one dimensional 2$\pi $-periodic version: if $1<p<\infty $, $%
W\in A_{p}$ and $f\in L_{W}^{p}$, then there are positive constants
depending only on $p,W$ such that%
\begin{equation*}
\left\Vert \left( \sum\limits_{l=0}^{\infty }\left\vert \triangledown
_{l}\right\vert ^{2}\right) ^{1/2}\right\Vert _{p,W}\approx \left\Vert
f\right\Vert _{p,W}.
\end{equation*}%
Taking in extrapolation Theorem \ref{ET} as $F$:=$\left(
\sum\limits_{l=0}^{\infty }\left\vert \triangledown _{l}\right\vert
^{2}\right) ^{1/2}$ and $g$:=$\left\vert f\right\vert $ we obtain%
\begin{equation*}
\left\Vert \left( \sum\limits_{l=0}^{\infty }\left\vert \triangledown
_{l}\right\vert ^{2}\right) ^{1/2}\right\Vert _{\varphi ,W}\lesssim
\left\Vert f\right\Vert _{\varphi ,W}.
\end{equation*}%
Reverse of the last inequality can be obtained by changing the role of the
functions $F$ and $g$.
\end{proof}

\begin{proof}[\textbf{Proof of Theorem \protect\ref{proc}}]
Let $r\in \mathbb{R}^{+}$, $\beta \in \left( 1,\infty \right) $, $n\in 
\mathbb{N}$ and we suppose that the number $m\in \mathbb{N}$ satisfies $%
2^{m}\leq n<2^{m+1}$. Using $E_{n}\left( f\right) _{\varphi ,W}\downarrow 0$
and Littlewood-Paley inequality (\ref{L-P}) we have%
\begin{eqnarray*}
J_{n,r}^{\beta } &\text{:}&\text{=}\frac{1}{n^{2r}}\left\{ \sum\limits_{\nu
=1}^{n}\nu ^{2\beta r-1}E_{\nu }^{\beta }\left( f\right) _{\varphi
,W}\right\} ^{1/\beta }\leq \frac{1}{n^{2r}}\left\{ \sum\limits_{\nu
=1}^{m+1}\sum\limits_{l=2^{\nu -1}}^{2^{\nu }-1}l^{2\beta r-1}E_{l}^{\beta
}\left( f\right) _{\varphi ,W}\right\} ^{1/\beta } \\
&\leq &\frac{1}{n^{2r}}\left\{ \sum\limits_{\nu =1}^{m+1}2^{2\nu \beta
r}E_{2^{\nu -1}-1}^{\beta }\left( f\right) _{\varphi ,W}\right\} ^{1/\beta }
\\
&\lesssim &\frac{1}{n^{2r}}\left\{ \sum\limits_{\nu =1}^{m+1}2^{2\nu \beta
r}\left\Vert \sum\limits_{l=2^{\nu -1}}^{\infty }A_{l}\left( x,f\right)
\right\Vert _{\varphi ,W}^{\beta }\right\} ^{1/\beta } \\
&\lesssim &\frac{1}{n^{2r}}\left\{ \sum\limits_{\nu =1}^{m+1}2^{2\nu \beta
r}\left\Vert \left( \sum\limits_{l=\nu }^{\infty }\left\vert \triangledown
_{l}\right\vert ^{2}\right) ^{\frac{1}{2}}\right\Vert _{\varphi ,W}^{\beta
}\right\} ^{\frac{1}{\beta }} \\
&\lesssim &\left\{ \sum\limits_{\nu =1}^{m+1}\left\Vert \left( \frac{2^{4\nu
r}}{n^{4r}}\sum\limits_{l=\nu }^{\infty }\left\vert \triangledown
_{l}\right\vert ^{2}\right) ^{\frac{1}{2}}\right\Vert _{\varphi ,W}^{\beta
}\right\} ^{1/\beta }\text{.}
\end{eqnarray*}%
We assume $\beta =2$. Then $2\geq q$ and%
\begin{equation*}
J_{n,r}^{2}\lesssim \left\{ \sum\limits_{\nu =1}^{m+1}\left\Vert \left( 
\frac{2^{4\nu r}}{n^{4r}}\sum\limits_{\mu =\nu }^{\infty }\left\vert
\triangledown _{l}\right\vert ^{2}\right) ^{1/2}\right\Vert _{\varphi
,W}^{2}\right\} ^{1/2}\text{.}
\end{equation*}%
Since the $l_{p}$ norm is decrease with $p\uparrow $ we have%
\begin{equation*}
J_{n,r}^{2}\lesssim \left\{ \sum\limits_{\nu =1}^{m+1}\left\Vert \left( 
\frac{2^{4\nu r}}{n^{4r}}\sum\limits_{l=\nu }^{\infty }\left\vert
\triangledown _{l}\right\vert ^{2}\right) ^{1/2}\right\Vert _{\varphi
,W}^{q}\right\} ^{1/q}\text{.}
\end{equation*}%
Using $q$ concavity of $L_{W}^{\varphi }$ we obtain%
\begin{equation*}
J_{n,r}^{2}\lesssim \left\Vert \left( \sum\limits_{\nu =1}^{m+1}\left( \frac{%
2^{4\nu r}}{n^{4r}}\sum\limits_{l=\nu }^{\infty }\left\vert \triangledown
_{l}\right\vert ^{2}\right) ^{q/2}\right) ^{1/q}\right\Vert _{\varphi
,W}\lesssim \left\Vert \sum\limits_{\nu =1}^{m+1}\frac{2^{2\nu r}}{n^{2r}}%
\sum\limits_{l=\nu }^{\infty }\left\vert \triangledown _{l}\right\vert
\right\Vert _{\varphi ,W}\text{.}
\end{equation*}%
Abel's transformation and Minkowski's inequality imply that%
\begin{eqnarray*}
J_{n,r}^{2} &\lesssim &\left\Vert \sum\limits_{\nu =1}^{m}\frac{2^{2\nu r}}{%
n^{2r}}\left\vert \triangledown _{\nu }\right\vert +\frac{2^{2r\left(
m+1\right) }}{n^{2r}}\sum\limits_{l=m+1}^{\infty }\left\vert \triangledown
_{l}\right\vert \right\Vert _{\varphi ,W} \\
&\lesssim &\left\Vert \sum\limits_{\nu =1}^{m}\frac{2^{2\nu r}}{n^{2r}}%
\left\vert \triangledown _{\nu }\right\vert \right\Vert _{\varphi
,W}+\left\Vert \frac{2^{2r\left( m+1\right) }}{n^{2r}}\sum\limits_{l=m+1}^{%
\infty }\left\vert \triangledown _{l}\right\vert \right\Vert _{\varphi ,W}%
\text{.}
\end{eqnarray*}%
If $\beta =q$, then $2\leq q$ and $q$ concavity of $L_{W}^{\varphi }$ imply
that%
\begin{eqnarray*}
J_{n,r}^{q} &\lesssim &\left( \sum\limits_{\nu =1}^{m+1}\left\Vert \left( 
\frac{2^{4\nu r}}{n^{4r}}\sum\limits_{l=\nu }^{\infty }\left\vert
\triangledown _{l}\right\vert ^{2}\right) ^{1/2}\right\Vert _{\varphi
,W}^{q}\right) ^{\frac{1}{q}} \\
&\lesssim &\left( \left\Vert \left( \sum\limits_{\nu =1}^{m+1}\left( \frac{%
2^{4\nu r}}{n^{4r}}\sum\limits_{l=\nu }^{\infty }\left\vert \triangledown
_{l}\right\vert ^{2}\right) ^{\frac{q}{2}}\right) ^{\frac{1}{q}}\right\Vert
_{\varphi ,W}\right)  \\
&\lesssim &\left\Vert \left( \sum\limits_{\nu =1}^{m+1}\frac{2^{4\nu r}}{%
n^{4r}}\sum\limits_{l=\nu }^{\infty }\left\vert \triangledown
_{l}\right\vert ^{2}\right) ^{1/2}\right\Vert _{\varphi ,W}\text{.}
\end{eqnarray*}%
Using Abel's transformation and Minkowski's inequality we get%
\begin{eqnarray*}
J_{n,r}^{q} &\lesssim &\left\Vert \left( \sum\limits_{\nu =1}^{m}\frac{%
2^{4\nu r}}{n^{4r}}\left\vert \triangledown _{\nu }\right\vert ^{2}+\frac{%
2^{4r\left( m+1\right) }}{n^{4r}}\sum\limits_{l=m+1}^{\infty }\left\vert
\triangledown _{l}\right\vert ^{2}\right) ^{1/2}\right\Vert _{\varphi ,W} \\
&\lesssim &\left\Vert \left( \sum\limits_{\nu =1}^{m}\frac{2^{4\nu r}}{n^{4r}%
}\left\vert \triangledown _{\nu }\right\vert ^{2}\right) ^{1/2}\right\Vert
_{\varphi ,W}+\left\Vert \left( \frac{2^{4r\left( m+1\right) }}{n^{4r}}%
\sum\limits_{l=m+1}^{\infty }\left\vert \triangledown _{l}\right\vert
^{2}\right) ^{1/2}\right\Vert _{\varphi ,W} \\
&\lesssim &\left\Vert \sum\limits_{\nu =1}^{m}\frac{2^{2\nu r}}{n^{2r}}%
\left\vert \triangledown _{\nu }\right\vert \right\Vert _{\varphi
,W}+\left\Vert \sum\limits_{l=m+1}^{\infty }\left\vert \triangledown
_{l}\right\vert \right\Vert _{\varphi ,W}\text{.}
\end{eqnarray*}%
Therefore we obtain%
\begin{equation*}
J_{n,r}^{\beta }\lesssim \left\Vert \sum\limits_{\nu =1}^{m}\frac{2^{2\nu r}%
}{n^{2r}}\left\vert \Delta _{\nu }\right\vert \right\Vert _{\varphi
,W}+\left\Vert \sum\limits_{l=m+1}^{\infty }\left\vert \Delta
_{l}\right\vert \right\Vert _{\varphi ,W}\text{.}
\end{equation*}%
Then%
\begin{equation*}
J_{n,r}^{\beta }\lesssim \left\Vert \sum\limits_{\nu
=1}^{m}\sum\limits_{l=2^{\nu -1}}^{2^{\nu }-1}\frac{2^{2\nu r}}{n^{2r}}%
A_{l}\left( x,f\right) \right\Vert _{\varphi ,W}+\left\Vert
\sum\limits_{l=2^{m}}^{\infty }A_{l}\left( x,f\right) \right\Vert _{\varphi
,W}\text{.}
\end{equation*}%
Since%
\begin{equation*}
\left\Vert f-S_{n}\left( f\right) \right\Vert _{\varphi ,W}\lesssim
E_{n}\left( f\right) _{\varphi ,W}
\end{equation*}%
we have%
\begin{equation*}
J_{n,r}^{\beta }\lesssim \left\Vert \sum\limits_{\nu
=1}^{m}\sum\limits_{l=2^{\nu -1}}^{2^{\nu }-1}\frac{l^{-2r}2^{2r\nu }\left(
l/2n\right) ^{2r}}{\left( 1-\frac{\sin l/2n}{l/2n}\right) ^{r}}\left( 1-%
\frac{\sin l/2n}{l/2n}\right) ^{r}A_{l}\left( \cdot ,f\right) \right\Vert
_{\varphi ,W}+E_{2^{m}-1}\left( f\right) _{\varphi ,W}
\end{equation*}%
and using Theorem \ref{teo2}%
\begin{equation*}
J_{n,r}^{\beta }\lesssim \left\Vert \sum\limits_{l=1}^{2^{m}-1}\frac{%
l^{-2r}2^{2r\nu }\left( l/2n\right) ^{2r}}{\left( 1-\frac{\sin l/2n}{l/2n}%
\right) ^{r}}\left( 1-\frac{\sin l/2n}{l/2n}\right) ^{r}A_{l}\left( \cdot
,f\right) \right\Vert _{\varphi ,W}+\Omega _{r}\left( f,\frac{1}{n}\right)
_{\varphi ,W}\text{.}
\end{equation*}%
We define%
\begin{equation*}
h_{l}:=\left\{ 
\begin{tabular}{ll}
$\frac{2^{2\nu r}}{l^{2r}}$ & , for $1\leq l\leq 2^{m}-1$, $\nu =1,\ldots ,m$%
, \\ 
$\frac{2^{2mr}}{l^{2r}}$ & , for $2^{m}\leq l\leq n$, \\ 
$0$ & , for $l>n$.%
\end{tabular}%
\right. 
\end{equation*}%
and%
\begin{equation*}
\lambda _{l}:=\left\{ 
\begin{tabular}{ll}
$\frac{\left( l/2n\right) ^{2r}}{\left( 1-\frac{\sin l/2n}{l/2n}\right) ^{r}}
$ & , for $1\leq l\leq 2n$, \\ 
$0$ & , for $l>2n$.%
\end{tabular}%
\right. 
\end{equation*}%
Hence, for $l=1,2,3,\ldots $, $\left\{ h_{l}\right\} $ and $\left\{ \lambda
_{l}\right\} $ satisfy (\ref{MS}). Taking%
\begin{equation*}
I:=\left\Vert \sum\limits_{\mu =1}^{2^{m}-1}\frac{2^{2\nu r}}{l^{2r}}\frac{%
\left( l/2n\right) ^{2r}}{\left( 1-\frac{\sin l/2n}{l/2n}\right) ^{r}}\left(
1-\frac{\sin l/2n}{l/2n}\right) ^{r}A_{l}\left( \cdot ,f\right) \right\Vert
_{\varphi ,W}
\end{equation*}%
we get%
\begin{equation*}
I=\left\Vert \sum\limits_{l=1}^{\infty }h_{l}\lambda _{l}\left( 1-\frac{\sin
l/2n}{l/2n}\right) ^{r}A_{l}\left( \cdot ,f\right) \right\Vert _{\varphi ,W}
\end{equation*}%
and using Theorem \ref{mmt} twice there hold%
\begin{equation*}
J_{n,r}^{\beta }\lesssim \left\Vert \sum\limits_{l=1}^{\infty }\left( 1-%
\frac{\sin l/2n}{l/2n}\right) ^{r}A_{l}\left( f\right) \right\Vert _{\varphi
,W}\text{=}\left\Vert \left( I-\mathcal{A}_{\frac{1}{n}}\right)
^{r}f\right\Vert _{\varphi ,W}\lesssim \Omega _{r}\left( f,\frac{1}{n}%
\right) _{\varphi ,W}\text{.}
\end{equation*}
\end{proof}

\begin{proof}[\textbf{Proof of Theorem \protect\ref{rea}}]
We prove (\ref{real}). Let $T_{n}$ be the near best approximating
trigonometric polynomial to $f.$ From Theorem \ref{teo2}%
\begin{equation*}
\left\Vert f-T_{n}\right\Vert _{\varphi ,W}\lesssim E_{n}\left( f\right)
_{\varphi ,W}\lesssim \Omega _{k}\left( f,\frac{1}{n}\right) _{\varphi ,W}.
\end{equation*}%
Thus using Lemma \ref{lem3}%
\begin{eqnarray*}
\frac{1}{n^{2k}}\left\Vert T_{n}^{(2k)}\right\Vert _{\varphi ,W} &\lesssim
&\Omega _{k}\left( T_{n},1/n\right) _{\varphi ,W} \\
&\lesssim &\Omega _{k}\left( T_{n}-f,1/n\right) _{\varphi ,W}+\Omega
_{k}\left( f,1/n\right) _{\varphi ,W} \\
&\lesssim &\left\Vert f-T_{n}\right\Vert _{\varphi ,W}+\Omega _{k}\left(
f,1/n\right) _{\varphi ,W}\lesssim \Omega _{k}\left( f,1/n\right) _{\varphi
,W}
\end{eqnarray*}%
and 
\begin{equation*}
\left\Vert f-T_{n}\right\Vert _{\varphi ,W}+\frac{1}{n^{2k}}\left\Vert
T_{n}^{(2k)}\right\Vert _{\varphi ,W}\lesssim \Omega _{k}\left( f,1/n\right)
_{\varphi ,W}.
\end{equation*}%
On the other hand from Lemma \ref{lem2}%
\begin{eqnarray*}
\Omega _{k}\left( f,1/n\right) _{\varphi ,W} &\leq &\Omega _{k}\left(
f-T_{n},1/n\right) _{\varphi ,W}+\Omega _{k}\left( T_{n},1/n\right)
_{\varphi ,W} \\
&\lesssim &\left\Vert f-T_{n}\right\Vert _{\varphi ,W}+\frac{1}{n^{2k}}%
\left\Vert T_{n}^{(2k)}\right\Vert _{\varphi ,W}=R_{2k}\left( f,1/n\right) .
\end{eqnarray*}%
Thus, (\ref{real}) is proved. The proof of the equivalence (\ref{Kf})
follows from properties of modulus of smoothness, $K$-functional and Lemma %
\ref{kubu}.
\end{proof}

\begin{proof}[\textbf{Proof of Theorem \protect\ref{teo4}}]
We use the method of Natanson and Timan, given in \cite{TiNa}. By Corollary %
\ref{co} we have, for $v\leq n,$%
\begin{equation*}
\Omega _{k}\left( f,1/v\right) _{\varphi ,W}\leq \left( 1+n/v\right) \Omega
_{k}\left( f,1/n\right) _{\varphi ,W}
\end{equation*}%
and hence%
\begin{equation*}
\prod\limits_{v=1}^{n}\Omega _{k}\left( f,1/v\right) _{\varphi ,W}\leq
\prod\limits_{v=1}^{n}\left( 1+\frac{n}{v}\right) \left( \Omega _{k}\left(
f,1/n\right) _{\varphi ,W}\right) ^{n}.
\end{equation*}%
For every $n\in \mathbb{N}$, we have%
\begin{equation*}
\prod\limits_{v=1}^{n}\left( 1+n/v\right) \leq \frac{2n}{\sqrt[n]{n!}}.
\end{equation*}%
Using Stirling formula%
\begin{equation*}
n!\approx \sqrt{2\pi n}n^{n}e^{-n}e^{\theta \left( n\right) }\text{ with }%
\left\vert \theta \left( n\right) \right\vert \leq 1/\left( 12n\right)
\end{equation*}%
we get%
\begin{equation*}
\prod\limits_{v=1}^{n}\left( 1+n/v\right) \leq 2^{k}e^{2k}.
\end{equation*}%
Thus%
\begin{equation*}
\left( \prod\limits_{v=1}^{n}\Omega _{k}\left( f,1/v\right) _{\varphi
,W}\right) ^{1/n}\lesssim \Omega _{k}\left( f,1/n\right) _{\varphi ,W}.
\end{equation*}%
From Theorem \ref{teo2} and the property $E_{n}\left( f\right) _{\varphi
,W}\downarrow $ as $n\uparrow $, we find%
\begin{equation*}
\left( \prod\limits_{v=1}^{n}E_{v}\left( f\right) _{\varphi ,W}\right)
^{1/n}\lesssim \left( \prod\limits_{v=1}^{n}\Omega _{k}\left( f,1/v\right)
_{\varphi ,W}\right) ^{1/n}\lesssim \Omega _{k}\left( f,1/n\right) _{\varphi
,W}
\end{equation*}%
and the result (\ref{IJ}) follows.
\end{proof}

\begin{proof}[\textbf{Proof of Lemma \protect\ref{lem2}}]
For $0<t\leq \frac{1}{n}$ we have%
\begin{eqnarray*}
\left\Vert (I-\mathcal{A}_{t})^{k}T_{n}\right\Vert _{\varphi ,W}
&=&\left\Vert \sum\limits_{j=0}^{n}\left( 1-\frac{\sin jt/2}{jt/2}\right)
^{k}A_{j}\left( \cdot ,T_{n}\right) \right\Vert _{\varphi ,W} \\
&=&\left\Vert \sum\limits_{j=1}^{n}\left( \frac{1-\frac{\sin jt/2}{jt/2}}{%
\left( jt/2\right) ^{2}}\right) ^{k}\left( jt/2\right) ^{2k}A_{j}\left(
\cdot ,T_{n}\right) \right\Vert _{\varphi ,W} \\
&\lesssim &\frac{1}{n^{2k}}\left\Vert \sum\limits_{j=1}^{n}\left( \frac{1-%
\frac{\sin jt/2}{jt/2}}{\left( jt/2\right) ^{2}}\right)
^{k}j^{2k}A_{j}\left( \cdot ,T_{n}\right) \right\Vert _{\varphi ,W}.
\end{eqnarray*}%
Using%
\begin{equation*}
\left( 1-\frac{\sin x}{x}\right) \leq x^{2}\text{,\quad }x\in \mathbb{R}%
^{+}\cup \left\{ 0\right\} ,
\end{equation*}%
and Marcinkiewicz type multiplier Theorem \ref{mmt} we have%
\begin{equation*}
\left\Vert (I-\mathcal{A}_{t})^{k}T_{n}\right\Vert _{\varphi ,W}\lesssim
n^{-2k}\left\Vert \sum\limits_{j=1}^{n}j^{2k}A_{j}\left( \cdot ,T_{n}\right)
\right\Vert _{\varphi ,W}.
\end{equation*}%
For $j\in \mathbb{N}$ we observe%
\begin{eqnarray*}
A_{j}\left( \cdot ,T_{n}\right) &=&a_{j}\cos j\left( \cdot +\frac{k\pi }{j}-%
\frac{k\pi }{j}\right) +b_{j}\sin j\left( \cdot +\frac{k\pi }{j}-\frac{k\pi 
}{j}\right) \\
&=&\cos k\pi \left[ a_{j}\cos j\left( \cdot +\frac{k\pi }{j}\right)
+b_{j}\sin j\left( \cdot +\frac{k\pi }{j}\right) \right] + \\
&&+\sin k\pi \left[ a_{j}\sin j\left( \cdot +\frac{k\pi }{j}\right)
-b_{j}\cos j\left( \cdot +\frac{k\pi }{j}\right) \right] \\
&=&A_{j}\left( \cdot +\frac{k\pi }{j},T_{n}\right) \cos k\pi +A_{j}\left(
\cdot +\frac{k\pi }{j},\widetilde{T_{n}}\right) \sin k\pi
\end{eqnarray*}%
and hence%
\begin{equation*}
\left\Vert (I-\mathcal{A}_{t})^{k}T_{n}\right\Vert _{\varphi ,W}\lesssim
\end{equation*}%
\begin{eqnarray*}
&\lesssim &\frac{1}{n^{2k}}\left\Vert \sum\limits_{j=1}^{n}j^{2k}\left[
A_{j}\left( \cdot +\frac{k\pi }{j},T_{n}\right) \cos k\pi +A_{j}\left( \cdot
+\frac{k\pi }{j},\widetilde{T_{n}}\right) \sin k\pi \right] \right\Vert
_{\varphi ,W} \\
&\lesssim &n^{-2k}\left( \left\Vert \sum\limits_{j=1}^{n}j^{2k}A_{j}\left(
\cdot +\frac{k\pi }{j},T_{n}\right) \right\Vert _{\varphi ,W}+\left\Vert
\sum\limits_{j=1}^{n}j^{2k}A_{j}\left( \cdot +\frac{k\pi }{j},\widetilde{%
T_{n}}\right) \right\Vert _{\varphi ,W}\right) .
\end{eqnarray*}

Since 
\begin{equation*}
A_{j}\left( \cdot ,T_{n}^{\left( 2k\right) }\right) =j^{2k}A_{j}\left( \cdot
+\frac{k\pi }{j},T_{n}\right) \text{,\quad }j\in \mathbb{N}
\end{equation*}%
using (\ref{esen})\ we obtain

\begin{equation*}
\Omega _{k}\left( T_{n},\frac{1}{n}\right) _{\varphi ,W}=\underset{0\leq
t\leq 1/n}{\sup }\left\Vert (I-\mathcal{A}_{t})^{k}T_{n}\right\Vert
_{\varphi ,W}
\end{equation*}%
\begin{equation*}
\lesssim n^{-2k}\left( \left\Vert \sum\limits_{j=1}^{n}j^{2k}A_{j}\left(
\cdot +\frac{k\pi }{j},T_{n}\right) \right\Vert _{\varphi ,W}+\left\Vert
\sum\limits_{j=1}^{n}j^{2k}A_{j}\left( \cdot +\frac{k\pi }{j},\widetilde{%
T_{n}}\right) \right\Vert _{\varphi ,W}\right)
\end{equation*}%
\begin{equation*}
\lesssim n^{-2k}\left( \left\Vert T_{n}^{\left( 2k\right) }\right\Vert
_{\varphi ,W}+\left\Vert \widetilde{T_{n}^{\left( 2k\right) }}\right\Vert
_{\varphi ,W}\right) \lesssim n^{-2k}\left\Vert T_{n}^{\left( 2k\right)
}\right\Vert _{\varphi ,W}
\end{equation*}%
and the proof is completed.
\end{proof}

\begin{proof}[\textbf{Proof of Lemma \protect\ref{lem3}}]
Let $T_{n}\left( x\right) =\sum\nolimits_{j=0}^{n}A_{j}(x,T_{n})$. Then,%
\begin{eqnarray*}
n^{-2k}\left\Vert T_{n}^{\left( 2k\right) }\right\Vert _{\varphi ,W}
&=&n^{-2k}\left\Vert \sum\limits_{j=1}^{n}j^{2k}A_{j}\left( \cdot +\frac{%
k\pi }{j},T_{n}\right) \right\Vert _{\varphi ,W} \\
&=&n^{-2k}\left\Vert \sum\limits_{j=1}^{n}j^{2k}\left( \cos k\pi A_{j}\left(
\cdot ,T_{n}\right) -\sin k\pi A_{j}\left( \cdot ,\widetilde{T_{n}}\right)
\right) \right\Vert _{\varphi ,W} \\
&\lesssim &\left\Vert \sum\limits_{j=1}^{n}\left( \frac{j}{2n}\right)
^{2k}A_{j}\left( \cdot ,T_{n}\right) \right\Vert _{\varphi ,W} \\
&&+\left\Vert \sum\limits_{j=1}^{n}\left( j/2n\right) ^{2k}A_{j}\left( \cdot
,\widetilde{T_{n}}\right) \right\Vert _{\varphi ,W} \\
&\lesssim &\left\Vert \sum\limits_{j=1}^{n}\left( \frac{\left( j/2n\right)
^{2}}{\left( 1-\frac{\sin j/2n}{j/2n}\right) }\right) ^{k}\left( 1-\frac{%
\sin j/2n}{j/2n}\right) ^{k}A_{j}\left( \cdot ,T_{n}\right) \right\Vert
_{\varphi ,W}+ \\
&&+\left\Vert \sum\limits_{j=1}^{n}\left( \frac{\left( j/2n\right) ^{2}}{%
\left( 1-\frac{\sin j/2n}{j/2n}\right) }\right) ^{k}\left( 1-\frac{\sin j/2n%
}{j/2n}\right) ^{k}A_{j}\left( \cdot ,\widetilde{T_{n}}\right) \right\Vert
_{\varphi ,W}.
\end{eqnarray*}%
Using Marcinkiewicz type multiplier Theorem \ref{mmt} we have%
\begin{eqnarray*}
n^{-2k}\left\Vert T_{n}^{\left( 2k\right) }\right\Vert _{\varphi ,W}
&\lesssim &\left\Vert \sum\limits_{j=1}^{n}\left( 1-\frac{\sin j/2n}{j/2n}%
\right) ^{k}A_{j}\left( \cdot ,T_{n}\right) \right\Vert _{\varphi ,W}+ \\
&&+\left\Vert \sum\limits_{j=1}^{n}\left( 1-\frac{\sin j/2n}{j/2n}\right)
^{k}A_{j}\left( \cdot ,\widetilde{T_{n}}\right) \right\Vert _{\varphi ,W} \\
&=&\left\Vert \sum\limits_{j=1}^{n}\left( 1-\frac{\sin j/2n}{j/2n}\right)
^{k}A_{j}\left( \cdot ,T_{n}\right) \right\Vert _{\varphi ,W}+ \\
&&+\left\Vert \left( \sum\limits_{j=1}^{n}\left( 1-\frac{\sin j/2n}{j/2n}%
\right) ^{k}A_{j}\left( \cdot ,T_{n}\right) \right) ^{\thicksim }\right\Vert
_{\varphi ,W}.
\end{eqnarray*}

In the last step we used the linearity property of conjugate operator. Thus
from boundedness of conjugate (see e.g., (\ref{esen})) operator we get

\begin{eqnarray*}
n^{-2k}\left\Vert T_{n}^{\left( 2k\right) }\right\Vert _{\varphi ,W}
&\lesssim &\left\Vert \sum\limits_{j=1}^{n}\left( 1-\frac{\sin j/2n}{j/2n}%
\right) ^{k}A_{j}\left( \cdot ,T_{n}\right) \right\Vert _{\varphi ,W} \\
&\lesssim &\left\Vert (I-\mathcal{A}_{1/n})^{k}T_{n}\right\Vert _{\varphi
,W}\lesssim \Omega _{k}\left( T_{n},1/n\right) _{\varphi ,W}.
\end{eqnarray*}
\end{proof}

\begin{proof}[\textbf{Proof of Lemma \protect\ref{kubu}}]
Since $0<t<\infty $ there exists some $n\in \mathbb{N}$ so that $(1/n)<t\leq
(2/n)$ holds. Using Lemma \ref{lem2} we have 
\begin{equation}
\Omega _{k}\left( f,t\right) _{\varphi ,W}\leq \Omega _{k}\left(
f-T_{n},t\right) _{\varphi ,W}+\Omega _{k}\left( T_{n},t\right) _{\varphi
,W}\lesssim E_{n}\left( f\right) _{\varphi ,W}+t^{2k}\Vert T_{n}^{\left(
2k\right) }\Vert _{\varphi ,W}.  \label{laz1}
\end{equation}%
On the other hand (\ref{so}) gives $n^{2k}E_{n}\left( f\right) _{\varphi
,W}\lesssim E_{n}\left( f^{\left( 2k\right) }\right) _{\varphi ,W}$. Using
this and Theorem \ref{teo2} we get ($\beta \in \mathbb{R}^{+}$)%
\begin{equation}
E_{n}\left( f\right) _{\varphi ,W}\lesssim \frac{1}{n^{2k}}E_{n}\left(
f^{\left( 2k\right) }\right) _{\varphi ,W}\lesssim \frac{1}{n^{2k}}\Omega
_{\beta }\left( f^{\left( 2k\right) },1/n\right) _{\varphi ,W}\lesssim
t^{2k}\Vert f^{\left( 2k\right) }\Vert _{\varphi ,W}.  \label{laz2}
\end{equation}
\end{proof}

\end{document}